\providecommand{\U}[1]{\protect\rule{.1in}{.1in}}
\newtheorem{teo}{Theorem}[section]
\newtheorem{prop}[teo]{Proposition}
\newtheorem{cor}[teo]{Corollary}
\newtheorem{ex}[teo]{Example}
\newtheorem{obs}[teo]{Remark}
\newtheorem{lema}[teo]{Lemma}
\newtheorem{final remark}[teo]{Final Remark}
\newtheorem{definition}[teo]{Definition}
\newcommand{\an}{\left \Vert} 
\newcommand{\fn}{\right \Vert} 
\newcommand{\ap}{\left (} 
\newcommand{\fp}{\right )} 
\begin{document}

\title{\sc Coherence and compatibility: a stronger approach}
\date{}
\author{Joilson Ribeiro\thanks{Corresponding author - joilsonor@ufba.br}~, Fabricio Santos\thanks{Supported by a CAPES doctoral scholarship} ~and Ewerton Torres\thanks{Supported by a CAPES postdoctoral scholarship.\hfill\newline2010 Mathematics Subject
Classification: 47L22, 46G25, 47L20, 47B10. \newline Keywords: Banach spaces, multilinear operators, multi-ideals, symmetric ideals.}}\maketitle

\begin{abstract}
In this work we present a definition for coherence and compatibility of multilinear mappings and homogenous polynomial classes. These definitions are more restricted than the ones proposed before. We began analyzing this new definition in a technical sense, searching for what it has in common with other approaches. Then, we moved on to a more practical analysis. Through numerous examples of different classes of multilinear mappings and homogenous polynomials we checked the limits on these proposed definitions and propose there is a vast field in which they apply.
\end{abstract}

\section{Introduction and background}

When we think in terms of multilinear mappings and/or homogenous polynomial classes, it makes sense to ask how such classes compare with each other in distinct degrees of linearity (for multilinear mappings) or homogeneity degrees (for homogenous polynomials), a comparison that is made in general through neighboring levels (or degrees). Such a study is called a coherence study of the class. A comparison that also deserves attention is that of an $n$-linear applications (or $n$-homogeneous polynomials) class with an $1$-linear applications class (which is the same for both multilinear applications and homogeneous polynomials) and is known as a class compatibility study.
It is worth noting that it is not always a simple task to associate a multi-ideal to an operator ideal. For example, the ideal of absolutely summing operators has, at least, eight possible extensions to higher degrees (see, for example, \cite{bpr2,cp19, dimant,matos47,matos48,ps54,pg60}). The study of coherence and compatibility was motivated by  the concepts of ideals of polynomials closed under differentiation and closed for scalar multiplication that were introduced in \cite{bp12} (see also \cite{bbjp}) as an attempt to identify a set of properties that polynomial ideals are expected to have in order to maintain some harmony between the different levels of homogeneity. In that spirit, Carando, Dimant and Muro (related papers can be seen in \cite{cdm22,cdm23}) introduced a notion of coherent and compatible polynomial ideals with the same aim of filtering good polynomial extensions of given operator ideals. 
One question that arises at this point is: Should we develop the study of the coherence and compatibility of multilinear mappings and homogenous polynomials independently or should we compare these classes in any sense?
Our approach starts from the principle, defended by Pellegrino and Ribeiro in \cite{joilson}, that yes, we must do a comparative and unified study of these classes. It would theoretically be possible to develop this study in more general classes, but due to the lack of applicability and also due to the good properties of the ideals, we will work with classes of multi-ideal mappings and classes of homogeneous polynomials ideals. Our work is motivated in the interest of comparing distinct levels of linearity that are not neighbors and this comparison is made through forms (scalar polynomials) rather than the product of linear functionals (linear functional powers).
One of the motivations for this approach is the concept of hyper-ideals, introduced in \cite{ewerton}, where multilinear mappings classes satisfy a stronger property than the multi-ideal property. Instead of the class being stable by composition with linear operators, the class is stable by composition with multilinear mappings, and such a concept will be adapted to homogenous polynomials.
This new approach of coherence and compatibility places the forms in a protagonism previously occupied by linear functionals.
We call these new concepts {\it strong coherence} and {\it strong compatibility}.

In Section 1 we formally present this definition and deduce some general properties of this new concept, comparing especially with the approach presented in \cite{joilson}. In the following sections, through a series of examples, we show that well-known classes satisfy this new concept with some kind of particularity. For example, the composition ideals, introduced in \cite{bpr2} and studied in Section 2, are strongly coherent and compatible regardless of taking, in the polynomial case, the adapted concept from multilinear mappings or the ideal of polynomials obtained naturally through one of the conditions of the definition. However, the inequality method, presented in \cite{ewerton2} and studied in Section 5, is strongly coherent and compatible only in the second way. A class that also deserves to be highlighted is the Dunfort-Pettis class, introduced in Section 4, since it is one of the classes that shows the independence of the conditions of definition and does not satisfy the hyper ideal property, which is surprising from an intuitive perspective, as will be seen throughout the text.

From now on, $E, F, G, H, E_n, G_n, n \in \mathbb{N}$, shall denote Banach spaces over $\mathbb{K} = \mathbb{R}$ or $\mathbb{C}$. The symbols $E'$ stands for the topological dual of $E$ and $B_E$ for its closed unit ball. By ${\cal L}(E_1, \ldots, E_n;F)$ we denote the Banach space of continuous $n$-linear operators from $E_1 \times \cdots \times E_n$ to $F$ endowed with the usual uniform norm $\|\cdot\|$. In the linear case, we write ${\cal L}(E;F)$. If $E_1  = \cdots = E_n$, we write ${\cal L}(^nE;F)$. If $F = \mathbb{K}$ we write ${\cal L}(E_1, \ldots, E_n)$ and ${\cal L}(^nE)$. By ${\cal P}(^nE;F)$ we mean the Banach space of all continuous $n$-homogeneous polynomials from $E$ to $F$ endowed with the usual sup norm, which shall be denoted by $\|\cdot\|$. Given $\varphi_1 \in E_1', \ldots, \varphi_n \in E_n'$ and $y \in F$, by $\varphi_1 \otimes \cdots \otimes \varphi_n \otimes y$ we mean the $n$-linear operator defined by
$$\varphi_1 \otimes \cdots \otimes \varphi_n \otimes y(x_1, \ldots, x_n) = \varphi_1(x_1) \cdots \varphi_n(x_n)y. $$
Linear combinations of such operators are called {\it $n$-linear operators of finite type}. A vector-valued map is said to be of {\it finite rank} if its range generates a finite dimensional subspace of the target space.
The same concept goes for homogenous polynomials, that is, given $\varphi \in E'$ and $y \in F$ by $\varphi^n \otimes y$ we mean the $n$-homogenous polynomial
$$\varphi^n \otimes y(x) = \varphi(x)^ny.$$ Linear combinations of such operators are called {\it $n$-homogenous mappings of finite type}. Analogously, the finite rank polynomials are defined. Given $P\in \mathcal{P}(^nE;F)$, for $\check{P}$, we denote the only symmetrical multilinear mapping in $\mathcal{L}(^nE;F)$ such that $$P(x)=\check{P}(x,\ldots,x).$$ For $a \in E$, we also denoted by $P_a$ the $(n-1)$-homogenous polynomial such that $$P_a(x)=\check{P}(a,x,\ldots,x).$$
For background on spaces of multilinear and/or homogenous polynomials we refer to \cite{dineen, mujica}.


Given a class $\cal M$ ($\cal Q$, resp.) of multilinear mappings (homogenous polynomials) between Banach spaces, by ${\cal M}_n$ we mean its $n$-linear ($n$-homogenous) component, that is, for all Banach spaces $E_1,\ldots,E_n,E$ and $F$, ${\cal M}_n(E_1,\ldots,E_n;F) := {\cal L}(E_1,\ldots,E_n;F) \cap {\cal M}$ (${\cal Q}_n(^nE;F) := {\cal P}(^nE;F) \cap {\cal Q}$)

An $p$-normed multi-ideal is a class $\cal M$ of multilinear mappings endowed with a map $\|\cdot\|_{\mathcal{M}} \colon \mathcal{M} \longrightarrow [0,\infty)$ such that:\\
$\bullet$ For all $n, E_1, \ldots, E_n,F$, $(\mathcal{M}(E_1,\ldots, E_n;F), \|\cdot\|_{\mathcal{M}})$ is a $p$-normed linear subspace of $\mathcal{L}(E_1,\ldots, E_n;F)$ containing the $n$-linear mappings of finite type;\\
$\bullet$  $\|I_n \colon \mathbb{K}^n\longrightarrow \mathbb{K}, I_n(\lambda_1,\ldots,\lambda_n)=\lambda_1\cdots\lambda_n\|_{\cal M} =1$ for every $n$;\\
$\bullet$ The multi-ideal property: If $A \in \mathcal{M}(E_1,\ldots, E_n;F)$, $u_1\in \mathcal{L}(G_{1};E_1)$, $\ldots$, $u_n\in \mathcal{L}(G_n;E_n)$ and $t \in \mathcal{L}(F;H)$, then  $t\circ A\circ(u_1,\ldots,u_n) \in \mathcal{M}(G_1,\ldots, G_n;H)$ and
$$\|t\circ A\circ(u_1,\ldots,u_n)\|_{\mathcal{M}}\le\|t\|\cdot\|A\|_{\mathcal{M}}\cdot
\|u_1\|\cdots\|u_n\|$$
The notions of normed, Banach and $p$-Banach multi-ideals are defined in the obvious way and goes back to \cite{pietsch}. Now

A $p$-normed polynomial is a class $\cal Q$ of homogenous polynomials endowed with a map $\|\cdot\|_{\mathcal{Q}} \colon \mathcal{Q} \longrightarrow [0,\infty)$ such that:\\
$\bullet$ For all $n$, $E,F$, $(\mathcal{Q}(^nE;F), \|\cdot\|_{\mathcal{Q}})$ is a $p$-normed linear subspace of $\mathcal{Q}(^nE;F)$ containing the $n$-homogenous polynomials of finite type;\\
$\bullet$  $\|\hat{I}_n \colon \mathbb{K}\longrightarrow \mathbb{K}, \hat{I}_n(\lambda)=\lambda^n\|_{\cal Q} =1$ for every $n$;\\
$\bullet$ The ideal property: If $P \in \mathcal{Q}(^nE;F)$, $u\in \mathcal{L}(G;E)$ and $t \in \mathcal{L}(F;H)$, then  $t\circ P\circ u \in \mathcal{Q}(^nG;H)$ and
$$\|t\circ P\circ u\|_{\mathcal{Q}}\le\|t\|\cdot\|P\|_{\mathcal{Q}}\cdot
\|u\|^n$$
The notions of normed, Banach and $p$-Banach polynomial ideals are also defined in the intuitive way.


\section{Definition and properties}

\begin{definition}\label{HCoh}\rm
Let $\mathcal{M}$ be a class of normed multilinear operators and $\mathcal{U}$ a class of normed homogeneous polynomials and $N \in \mathbb{N} \cup \{\infty \}$. The sequence $\left(\mathcal{U}_k, \mathcal{M}_k  \right)_{k=1}^{N}$, where $k$ indicates the linearity degree (homogeneity degree) of the mappings (polynomials) that are in $\mathcal{M}$ ($\mathcal{U}$, respectively) with $\mathcal{U}_1 = \mathcal{M}_1 = \mathcal{I}$, is \textit{strongly coherent} if there are constants $\beta_1, \beta_2$ and $\beta_3$ such that, for all Banach spaces $E$, $E_{1},\dots,E_{k+1}$ and $F$, the following conditions are true for all $k = 1,..., N-1$:
\begin{description}
\item $($CH1$)$ If $T \in \mathcal{M}_{k + 1}\left(E_1\ldots, E_{k+1}; F \right)$ and $a_j \in E_j$ for $j=1,\dots, k+1$, then
$$T_{a_j} \in \mathcal{M}_k\left(E_1,\ldots, E_{j-1}, E_{j+1},\ldots, E_{k+1}; F  \right)$$ and
$$\left\|T_{a_j} \right\|_{\mathcal{M}_k} \le \beta_1 \left\|T  \right\|_{\mathcal{M}_{k + 1}}\|a_j\|.$$

\item $($CH2$)$ If $P \in \mathcal{U}_{k+1}\left(^{k+1}E; F  \right)$, $a \in E$, then $P_a$ belongs to $\mathcal{U}_k\left(^kE; F \right)$ with
$$\left\|P_a  \right\|_{\mathcal{U}_k} \le \beta_2 \max{\left\{\left\|\check{P} \right\|_{\mathcal{M}_{k+1}}, \left\|P  \right\|_{\mathcal{U}_{k+1}}  \right\}}\|a\|.$$

\item $($CH3$)$ If $T \in \mathcal{M}_k(E_1,\ldots, E_k; F), Q \in \mathcal{L}\left(E_{k+1},\ldots, E_{k+n} \right)$, then
$$QT \in \mathcal{M}_{k+n}(E_1,\dots, E_{k+n}; F)$$
 and
$$\left\|Q T\right\|_{ \mathcal{M}_{k + n}} \le \beta_3 \|Q \| \left\|T  \right\|_{ \mathcal{M}_{k}}.$$

\item $($CH4$)$ If $P \in \mathcal{U}_{k}\left(^kE; F  \right)$ and $Q \in \mathcal{P}\left(^nE\right)$, then
$$Q P \in \mathcal{U}_{k+n}\left(^{k + n}E; F  \right).$$

\item $($CH5$)$ For all $k = 1,\ldots, N$, $P$ belongs to $ \mathcal{U}_{k}\left(^kE; F\right)$ if, and only if, $\check{P}$ belongs to $ \mathcal{M}_{k}(^kE; F)$.
\end{description}
\end{definition}

\begin{definition}\label{HComp}\rm
Let $\mathcal{M}$ be a class of normed multilinear operators and $\mathcal{U}$ a class of normed homogenous polynomials and $N \in \mathbb{N} \cup \{\infty \}$. The sequence $\left(\mathcal{U}_k, \mathcal{M}_k  \right)_{k=1}^{N}$,  with $\mathcal{U}_1 = \mathcal{M}_1 = \mathcal{I}$, is \textit{strongly compatible} with $\mathcal{I}$ if there are constants $\alpha_1$, $\alpha_2$ and $\alpha_3$ such that, for all Banach spaces $E$ and $F$, the following conditions are true for all $n \in \{2,\ldots, N\}$:
\begin{description}
\item $($CP1$)$ If $k \in \{1,\ldots, n\}, T \in \mathcal{M}_n(E_1,\ldots, E_n;F)$ and $a_j \in E_j$, for all $j \in \{1,\dots, n\}\backslash{k}$, then
$$T_{a_1,..., a_{k-1},a_{k+1},..., a_n} \in \mathcal{I}(E_k; F)$$ and
$$\|T_{a_1,..., a_{k-1},a_{k+1},..., a_n}\|_{\mathcal{I}} \le \alpha_1 \left\|T\right\|_{\mathcal{M}_n}\|a_1\|\cdots \|a_{k-1}\| \ \|a_{k+1}\|\cdots\|a_n\|.$$

\item $($CP2$)$ If $P \in \mathcal{U}_n(^nE; F)$ and $a \in F$, then $P_{a^{n-1}} \in \mathcal{I}(E;F)$ and
$$\left\|P_{a^{n-1}}\right\|_{\mathcal{I}} \le \alpha_2 \max{\left\{\left\|\check{P} \right\|_{\mathcal{M}_n}, \left\|P  \right\|_{\mathcal{U}_n} \right\}}\|a\|^{n-1},$$
where $P_{a^{n-1}}(x)=\check{P}(a,\dots,a,x)$.

\item $($CP3$)$ If $u \in \mathcal{I}(E_{n}; F)$ and $Q \in \mathcal{L}\left(E_1,\dots,E_{n-1} \right)$, then
$$Qu \in \mathcal{M}_n(E_1,..., E_n; F)\ \ \mbox{and} \ \ \left|\left|Qu  \right|\right|_{\mathcal{M}_n} \le \alpha_3 \|Q\|\left|\left|u  \right|\right|_{\mathcal{I}}.$$

\item $($CP4$)$ If $u \in \mathcal{U}(E; F)$ and $P \in \mathcal{P}\left(^{n-1}E \right)$, then $$Pu \in \mathcal{U}_n(^nE; F).$$

\item $($CP5$)$ $P$ belongs to $\mathcal{U}_n(^nE; F)$ if, and only if, $\check{P}$ belongs to $\mathcal{M}_n(^nE; F)$.
\end{description}
\end{definition}

\begin{obs}\label{rmrk}\rm\
\begin{description}
\item $(1)$ It is clear that the above definitions are more restrictive than \cite[Definition 3.1 and Definition 3.2]{joilson} regarding the conditions (CH3) and (CP3).

\item $(2)$ As previously done in \cite{joilson}, we do not use the norm control conditions in (CH4) and (CP4) because, under these conditions, some canonical pairs of ideals would not be strongly coherent nor strongly compatible.

\item $(3)$ If $\beta_1 = \beta_2 = \beta_3 = 1$, then the strong coherence of a sequence $\left(\mathcal{U}_k, \mathcal{M}_k  \right)_{k=1}^{N}$ implies the strong compatibility with $\mathcal{I}$.

\item $(4)$ Regarding the notation of the degree of linearity, in the multilinear case, or homogeneity, in the polynomial case, we will reserve the option of overwriting that index when we find it appropriate to the overall notation.

\end{description}

\end{obs}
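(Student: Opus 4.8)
Only item $(3)$ carries a substantive statement: $(1)$ is immediate because a product (or a power) of continuous linear functionals is a particular continuous scalar multilinear form, so conditions $(CH3)$ and $(CP3)$ above are strictly stronger than their counterparts in \cite[Definitions 3.1 and 3.2]{joilson}, while $(2)$ and $(4)$ merely record conventions. So assume $(\mathcal{U}_k,\mathcal{M}_k)_{k=1}^{N}$ is strongly coherent with $\beta_1=\beta_2=\beta_3=1$. The plan is to obtain each of $(CP1)$--$(CP5)$, with $\alpha_1=\alpha_2=\alpha_3=1$, by \emph{specializing} or by \emph{iterating} one of the coherence conditions; the iteration will be harmless for the constants precisely because the $\beta_i$'s are equal to $1$ (if $N$ were finite one could afford arbitrary $\beta_i$ and obtain each $\alpha_i$ as a power of them, but those powers would be unbounded when $N=\infty$, which is why the hypothesis $\beta_i=1$ is imposed).

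First the three easy conditions. Condition $(CP5)$ is exactly $(CH5)$ read with $k=n$. Condition $(CP4)$ is the case $k=1$ of $(CH4)$ (with the exponent in $(CH4)$ renamed from $n$ to $n-1$): since $\mathcal{U}_1=\mathcal{I}$, for $u\in\mathcal{I}(E;F)$ and $P\in\mathcal{P}(^{n-1}E)$ one gets $Pu\in\mathcal{U}_{1+(n-1)}(^nE;F)=\mathcal{U}_n(^nE;F)$, with no norm control demanded (Remark~$(2)$). Condition $(CP3)$ is, after renaming, the case $k=1$ of $(CH3)$, with $u$ in the role of the operator factor and the scalar form $Q\in\mathcal{L}(E_1,\dots,E_{n-1})$ in the role of the form: one obtains $Qu\in\mathcal{M}_n$ and, since $\beta_3=1$, $\|Qu\|_{\mathcal{M}_n}\le\|Q\|\,\|u\|_{\mathcal{I}}$, so $\alpha_3=1$. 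For $(CP1)$ I would start from $T\in\mathcal{M}_n(E_1,\dots,E_n;F)$ and fix the prescribed vectors $a_j$, $j\neq k$, one at a time; each fixing is an instance of $(CH1)$ lowering the degree of linearity by one and multiplying the $\mathcal{M}$-norm by $\beta_1\|a_j\|=\|a_j\|$, so after $n-1$ steps one lands in $\mathcal{M}_1(E_k;F)=\mathcal{I}(E_k;F)$ with $\|T_{a_1,\dots,a_{k-1},a_{k+1},\dots,a_n}\|_{\mathcal{I}}\le\|T\|_{\mathcal{M}_n}\prod_{j\neq k}\|a_j\|$, giving $\alpha_1=1$ (only coordinates are fixed in place here, so no permutation of variables intervenes).

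The substantive case is $(CP2)$, and here is how I would handle it. Fix $P\in\mathcal{U}_n(^nE;F)$ and $a\in E$ (the displayed formula $P_{a^{n-1}}(x)=\check{P}(a,\dots,a,x)$ shows that the vector there lives in $E$), and set $R_0:=P$ and $R_{j+1}:=(R_j)_a$, so $R_j$ is $(n-j)$-homogeneous and $\check{R}_j$ denotes its associated symmetric $(n-j)$-linear map. A polarization argument gives the identity $\check{R}_j(x_1,\dots,x_{n-j})=\check{P}(a,\dots,a,x_1,\dots,x_{n-j})$ with $j$ copies of $a$; in particular $R_{n-1}=P_{a^{n-1}}$, and iterating $(CH2)$ shows $R_j\in\mathcal{U}_{n-j}$ for every $j$. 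To bound the norms one also needs a bound on the multilinear norm $\|\check{R}_j\|_{\mathcal{M}_{n-j}}$: by $(CH5)$ one has $\check{P}\in\mathcal{M}_n$, and since by the identity above $\check{R}_j$ is just $\check{P}$ with its first $j$ coordinates fixed at $a$, applying $(CH1)$ $j$ times and using $\beta_1=1$ gives $\|\check{R}_j\|_{\mathcal{M}_{n-j}}\le\|\check{P}\|_{\mathcal{M}_n}\|a\|^{j}$. An induction on $j$ then produces $\|R_j\|_{\mathcal{U}_{n-j}}\le M\|a\|^{j}$, where $M:=\max\{\|\check{P}\|_{\mathcal{M}_n},\|P\|_{\mathcal{U}_n}\}$: the inductive step feeds $\|\check{R}_j\|_{\mathcal{M}_{n-j}}\le M\|a\|^{j}$ and $\|R_j\|_{\mathcal{U}_{n-j}}\le M\|a\|^{j}$ into $(CH2)$ and uses $\beta_2=1$. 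Taking $j=n-1$ gives $(CP2)$ with $\alpha_2=1$.

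I expect the main obstacle to be precisely the bookkeeping in $(CP2)$: one must verify the polarization identity $\check{R}_j=\check{P}(a,\dots,a,\,\cdot\,,\dots,\,\cdot\,)$ carefully, so that the polynomial iteration (through $(CH2)$) and the multilinear iteration (through $(CH1)$) stay synchronized, and one must keep in mind that $(CH5)$ transfers membership but not norms, so the estimate for $\|\check{R}_j\|_{\mathcal{M}_{n-j}}$ genuinely has to be manufactured from $(CH1)$ and cannot be read off $(CH2)$. Everything else is a direct specialization of the coherence axioms, and the three compatibility constants come out equal to $1$ exactly because $\beta_1=\beta_2=\beta_3=1$.
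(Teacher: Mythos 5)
The paper states this remark without proof; your argument for item (3) is correct and is exactly the intended justification --- (CP3)--(CP5) are the $k=1$ (resp.\ $k=n$) specializations of (CH3)--(CH5), and (CP1)--(CP2) follow by iterating (CH1)--(CH2), the hypothesis $\beta_1=\beta_2=\beta_3=1$ being what keeps the accumulated constants equal to $1$. Your treatment of (CP2) --- manufacturing the bound on $\|\check{R}_j\|_{\mathcal{M}_{n-j}}$ from (CH1) and (CH5) rather than trying to read it off (CH2), since (CH5) transfers membership but no norm estimate --- is the one delicate point, and you handle it correctly.
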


Before we move on to explore this concept through numerous examples we noted that given a normed class $(\mathcal{M};\|\cdot\|_\mathcal{M})$ of multilinear mappings there is a natural choice for the polynomial class that could permit the pair becoming coherent or compatible. It is a well-known class, the definition of which we recall below.

\begin{definition}\rm Let $(\mathcal{M};\|\cdot\|_\mathcal{M})$ be a normed class of multilinear mappings. Then we considered the class
\begin{equation}\label{EE1.1.}
\mathcal{P}_{\mathcal{M}} := \left\{P \in \mathcal{P}; \check{P} \in \mathcal{M} \right\}
\end{equation}
with the norm $$\|P\|_{\mathcal{P}_{\mathcal{M}}}:=\|\check{P}\|_{\mathcal{M}},$$ inherited from the class $\mathcal{M}$.\end{definition}

Before we prove the main result of this section, we need to recall an important notion in the class of multilinear mappings. Let $S_n$ denote the group of permutations of $\{1,\ldots,n\}$. Given a multilinear operator $A\in\mathcal{L}(^nE;F)$ and a $\sigma\in S_n$, define $A_\sigma$ and  $A_s$ in $\mathcal{L}(^nE;F)$ by $$A_\sigma(x_1,\ldots,x_n)=A(x_{\sigma(1)},\ldots,x_{\sigma(n)}),$$
$$A_s(x_1,\ldots,x_n)=\frac{1}{n!}\cdot\sum\limits_{\sigma\in S_n}A(x_{\sigma(1)},\ldots,x_{\sigma(n)})=\frac{1}{n!}\cdot\sum\limits_{\sigma\in S_n}A_\sigma(x_1,\ldots,x_n).$$
\indent We say that a subclass $\mathcal{G}$ of the class of continuous multilinear operators between Banach spaces is {\it symmetric} if $A_s\in\mathcal{G}(^nE;F)$ whenever $A\in\mathcal{G}(^nE;F).$ If $\cal G$ is endowed with a function $\|\cdot\|_\mathcal{G}\colon \mathcal{G}\longrightarrow[0,\infty)$, we say that $\mathcal{G}$ is {\it strongly symmetric} if $A_\sigma \in {\cal G} {\rm ~and~} \|A_\sigma\|_\mathcal{G}=\|A\|_\mathcal{G}$ for all $A\in\mathcal{G}(^nE;F)$ and $\sigma\in S_n$.

\begin{prop}\label{PP1.9.}
Let $\left(\mathcal{M}_n, \|\cdot \|_{\mathcal{M}_n}\right)_{n=1}^N$ be a sequence of symmetrical multi-ideals satisfying the conditions $($CH1$)$ and $($CH3$)$  of Definition \ref{HCoh}. Then, the sequence of homogenous polynomials $\left(\mathcal{P}_{\mathcal{M}_n}, \|\cdot \|_{\mathcal{P}_{\mathcal{M}_n}} \right)_{n=1}^N$ satisfies the conditions $($CH2$)$ and $($CH4$)$.\end{prop}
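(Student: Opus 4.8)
The plan is to reduce each polynomial condition to its multilinear counterpart by passing to associated symmetric multilinear maps, using that, by the very definition of $\mathcal{P}_{\mathcal{M}_n}$, one has $\|P\|_{\mathcal{P}_{\mathcal{M}_n}}=\|\check{P}\|_{\mathcal{M}_n}$; in particular the maximum appearing in (CH2) collapses to $\|\check{P}\|_{\mathcal{M}_{k+1}}$. First I would record two elementary identifications of symmetric multilinear maps, both immediate from the uniqueness of $\check{(\cdot)}$: (a) for $P\in\mathcal{P}(^{k+1}E;F)$ and $a\in E$, the $k$-linear map $(x_1,\ldots,x_k)\mapsto\check{P}(a,x_1,\ldots,x_k)$ is symmetric (because $\check{P}$ is symmetric) and agrees with $P_a$ on the diagonal, hence equals $\check{(P_a)}$; (b) for $P\in\mathcal{P}(^kE;F)$ and $Q\in\mathcal{P}(^nE)$, writing $\check{Q}\,\check{P}$ for the (non-symmetric) $(k+n)$-linear map $(x_1,\ldots,x_{k+n})\mapsto\check{Q}(x_{k+1},\ldots,x_{k+n})\,\check{P}(x_1,\ldots,x_k)$, its symmetrization $(\check{Q}\,\check{P})_s$ agrees with $QP$ on the diagonal, hence equals $\check{(QP)}$.

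For (CH2) I would take $P\in\mathcal{P}_{\mathcal{M}_{k+1}}(^{k+1}E;F)$ and $a\in E$, so that $\check{P}\in\mathcal{M}_{k+1}(^{k+1}E;F)$ by definition. Applying (CH1) to $\check{P}$ (with $j=1$ and all the Banach spaces taken equal to $E$) and invoking identification (a), the map $\check{(P_a)}=\check{P}(a,\cdot,\ldots,\cdot)$ lies in $\mathcal{M}_k(^kE;F)$ with $\|\check{(P_a)}\|_{\mathcal{M}_k}\le\beta_1\|\check{P}\|_{\mathcal{M}_{k+1}}\|a\|$. Unravelling the definition of $\mathcal{P}_{\mathcal{M}_k}$, this reads $P_a\in\mathcal{P}_{\mathcal{M}_k}(^kE;F)$ with $\|P_a\|_{\mathcal{P}_{\mathcal{M}_k}}\le\beta_1\|P\|_{\mathcal{P}_{\mathcal{M}_{k+1}}}\|a\|$, which is (CH2) with $\beta_2=\beta_1$.

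For (CH4) I would take $P\in\mathcal{P}_{\mathcal{M}_k}(^kE;F)$ and $Q\in\mathcal{P}(^nE)$, so that $\check{P}\in\mathcal{M}_k(^kE;F)$ and $\check{Q}\in\mathcal{L}(^nE)$. By (CH3) (again with all spaces equal to $E$) the map $\check{Q}\,\check{P}$ belongs to $\mathcal{M}_{k+n}(^{k+n}E;F)$; since $\mathcal{M}_{k+n}$ is symmetric, its symmetrization $(\check{Q}\,\check{P})_s$ belongs to $\mathcal{M}_{k+n}(^{k+n}E;F)$ as well, and by identification (b) this symmetrization equals $\check{(QP)}$. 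Hence $QP\in\mathcal{P}_{\mathcal{M}_{k+n}}(^{k+n}E;F)$, which is (CH4) (no norm estimate being required there).

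The only slightly delicate points are the two diagonal identities of the first paragraph, especially the verification that the partial evaluation of a fully symmetric form is again symmetric, and that forming the product $\check{Q}\,\check{P}$ and then symmetrizing recovers exactly $\check{(QP)}$. The step I expect to be the main obstacle is identification (b) together with its use in (CH4): it is precisely here that the symmetry hypothesis on the multi-ideals $\mathcal{M}_n$ is needed (to keep the symmetrized product inside $\mathcal{M}_{k+n}$), whereas (CH2) requires only (CH1) and the intrinsic symmetry of $\check{P}$.
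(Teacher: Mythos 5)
Your proposal is correct and follows essentially the same route as the paper: (CH2) is obtained by applying (CH1) to $\check{P}$ and using $(P_a)^{\vee}=\check{P}(a,\cdot,\ldots,\cdot)$, and (CH4) by applying (CH3) to get $\check{Q}\check{P}\in\mathcal{M}_{k+n}$ and then using the symmetry of the multi-ideal together with the identity $(QP)^{\vee}=(\check{Q}\check{P})_s$. The only cosmetic difference is that you justify that identity by uniqueness of the symmetric multilinear map agreeing with $QP$ on the diagonal, whereas the paper verifies it by an explicit Leibniz-type computation; both are valid.
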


\begin{proof}We begin with condition (CH2). Let $E, F$ be Banach spaces, $P \in \mathcal{P}_{\mathcal{M}_{n+1}}(^{n+1}E; F)$ and $a \in E$. By (CH1), we have $$(P_a)^{\vee}= \check{P}_a \in \mathcal{M}_n.$$
From the definition of $\mathcal{P}_{\mathcal{M}_n}$, it follows that $$P_a \in \mathcal{P}_{\mathcal{M}_{n}}.$$
Now, again from (CH1),
$$\|P_a\|_{\mathcal{P}_{\mathcal{M}_n}} = \|(P_a)^{\vee}\|_{\mathcal{M}_n}=\|\check{P}_a\|_{\mathcal{M}_n}\le \beta_1\|\check{P}\|_{\mathcal{M}_{n+1}}\|a\|.$$
Thus we only need to take $\beta_2 = \beta_1$.

Now we verify the condition (CH4). Let $P \in \mathcal{P}_{\mathcal{M}_n}(^nE; F)$ and $Q \in \mathcal{P}(^mE)$. Note that,
\begin{align*}
&(QP)^{\vee}(x_1,\dots, x_{n+m}) =\\
&\displaystyle\frac{\check{Q}(x_{n+1},\dots, x_{n+m})\check{P}(x_1,\dots, x_{n})+ \cdots + \check{Q}(x_1,\dots, x_n)\check{P}(x_{n+1},\dots, x_{n+m})}
{
	\left(\begin{array}{c}
	n+m\\
	m
	\end{array}\right)
}=\\
& \displaystyle\frac{\displaystyle\sum_{\sigma \in S_{n+m}}\check{Q}\check{P}(x_{\sigma(1)},\dots, x_{\sigma(n+m)})}
{n!m!\frac{(n+m)!}{n!m!}}= \left( \check{Q}\check{P}\right)_{s}(x_1,\dots, x_{n+m}).
\end{align*}
So, $(QP)^{\vee} = \left( \check{Q}\check{P}\right)_{s}$.  Because $(\mathcal{M}_n)_{n=1}^N$ satisfies $($CH3$)$ and $\check{P}\in\mathcal{M}_n$ (by definition of $\mathcal{P}_{\mathcal{M}_n}$) we have that
$\check{P}\check{Q}\in\mathcal{M}_{n+m}(^{n+m}E;F)$. Then, it follows from the symmetry of $(\mathcal{M}_n)_{n=1}^N$ that $$(QP)^{\vee}=(\check{P}\check{Q})_s\in\mathcal{M}_{n+m}(^{n+m}E;F).$$
%
%
\end{proof}

{

\begin{obs}\label{rmk2.6.}\ \rm
	\begin{description}
		\item $(1)$ If $\left(\mathcal{M}_n, \|\cdot \|_{\mathcal{M}_n}\right)_{n=1}^N$ is a normed class of multilinear mappings satisfying the conditions $($CH1$)$ and $($CH3$)$, with $\beta_1 = \beta_3 = 1$, then by  Proposition \ref{PP1.9.} and item (3) of Remark \ref{rmrk} we have that $\left( \left(\mathcal{P}_{\mathcal{M}_n}, \|\cdot\|_{\mathcal{P}_{\mathcal{M}_n}} \right), \left(\mathcal{M}_n, \|\cdot\|_{\mathcal{M}_n}\right)\right)_{n=1}^N$ is strongly coherent and strongly compatible with $\mathcal{M}_1 = \mathcal{P}_{\mathcal{M}_1} = \mathcal{I}$.
		
		\item $(2)$ We have an analogous result for coherence and compatibility, as was previously done in \cite{joilson}, by adapting the verification of (CH4) for linear functionals instead of forms.
	\end{description}\end{obs}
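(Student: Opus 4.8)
The plan is to verify conditions (CH2) and (CH4) for the polynomial sequence $\left(\mathcal{P}_{\mathcal{M}_n}, \|\cdot\|_{\mathcal{P}_{\mathcal{M}_n}}\right)_{n=1}^N$ by reducing everything to the underlying multilinear statements (CH1) and (CH3), exploiting that $\mathcal{P}_{\mathcal{M}_n}$ is defined precisely by membership of the symmetrization, and that its norm is the $\mathcal{M}_n$-norm of that symmetrization.

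For (CH2), first I would take $P\in\mathcal{P}_{\mathcal{M}_{n+1}}(^{n+1}E;F)$ and $a\in E$, and observe the key identity $(P_a)^{\vee}=\check{P}_a$ — that is, the symmetric $n$-linear map associated with the $n$-homogeneous polynomial $P_a(x)=\check{P}(a,x,\ldots,x)$ is exactly the map obtained from $\check{P}$ by freezing one slot at $a$. Since $\check{P}\in\mathcal{M}_{n+1}$ by definition of $\mathcal{P}_{\mathcal{M}_{n+1}}$, condition (CH1) gives $\check{P}_a\in\mathcal{M}_n$ with the norm estimate $\|\check{P}_a\|_{\mathcal{M}_n}\le\beta_1\|\check{P}\|_{\mathcal{M}_{n+1}}\|a\|$. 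Hence $P_a\in\mathcal{P}_{\mathcal{M}_n}$, and unwinding the definition of $\|\cdot\|_{\mathcal{P}_{\mathcal{M}_n}}$ yields $\|P_a\|_{\mathcal{P}_{\mathcal{M}_n}}=\|\check{P}_a\|_{\mathcal{M}_n}\le\beta_1\|\check{P}\|_{\mathcal{M}_{n+1}}\|a\|$. Since $\|\check{P}\|_{\mathcal{M}_{n+1}}=\|P\|_{\mathcal{P}_{\mathcal{M}_{n+1}}}\le\max\{\|\check{P}\|_{\mathcal{M}_{n+1}},\|P\|_{\mathcal{U}_{n+1}}\}$, choosing $\beta_2=\beta_1$ suffices.

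For (CH4), I would take $P\in\mathcal{P}_{\mathcal{M}_n}(^nE;F)$ and $Q\in\mathcal{P}(^mE)$, and the crux is the combinatorial identity $(QP)^{\vee}=(\check{Q}\check{P})_s$, the full symmetrization of the pointwise product of the associated symmetric maps. Once this is in hand, the argument is: $\check{P}\in\mathcal{M}_n$ by definition, so (CH3) applied with $Q$ replaced by the scalar-valued $\check{Q}\in\mathcal{L}(^mE)$ gives $\check{Q}\check{P}\in\mathcal{M}_{n+m}(^{n+m}E;F)$; then symmetry of the multi-ideal sequence gives $(\check{Q}\check{P})_s\in\mathcal{M}_{n+m}(^{n+m}E;F)$, hence $QP\in\mathcal{P}_{\mathcal{M}_{n+m}}$. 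Note (CH4) asks only for membership, no norm control, so no estimate is needed here.

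The main obstacle is establishing the identity $(QP)^{\vee}=(\check{Q}\check{P})_s$ cleanly. One must expand $(QP)^{\vee}$ via the polarization formula: writing out $\check{Q}\check{P}$ symmetrized over $S_{n+m}$, most permutations yield the same summand up to the $n!\,m!$ internal reorderings that leave $\check{P}$ and $\check{Q}$ invariant (by their own symmetry), so the sum collapses to the binomial-coefficient-weighted sum of the $\binom{n+m}{m}$ distinct ways of splitting the $n+m$ arguments between $\check{P}$ and $\check{Q}$, which is exactly $(QP)^{\vee}$. Care is needed to confirm the normalizing constant $n!\,m!\cdot\frac{(n+m)!}{n!\,m!}=(n+m)!$ matches the $\frac{1}{(n+m)!}$ in the definition of $(\cdot)_s$; the identity $(P_a)^{\vee}=\check{P}_a$ needed for (CH2) is the special case $m=1$, $Q=\varphi$ a linear functional with $\varphi(a)$-type evaluation, and is essentially immediate from symmetry of $\check{P}$.
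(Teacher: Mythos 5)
Your proposal is correct and follows essentially the same route as the paper: the remark is obtained by verifying (CH2) via the identity $(P_a)^{\vee}=\check{P}_a$ together with (CH1), and (CH4) via the combinatorial identity $(QP)^{\vee}=(\check{Q}\check{P})_s$ together with (CH3) and the symmetry of the class, which is exactly the content and method of Proposition \ref{PP1.9.}; the remaining conditions are the hypotheses, (CH5) holds by definition of $\mathcal{P}_{\mathcal{M}}$, and compatibility follows from item (3) of Remark \ref{rmrk} since $\beta_1=\beta_2=\beta_3=1$.
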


Now is the appropriate time to talk about the concept of hyper-ideals of multilinear mappings and homogenous polynomials. The definition of hyper-ideals of multilinear mappings was originally given in \cite{ewerton}, whose adaptation to the homogeneous polynomials is immediate, which will be enunciated below.

\begin{definition}\label{dhip}\rm Let $0<p\le1$, $(\mathcal{Q},\|\cdot\|_{\cal Q})$ be a subclass of all homogenous polynomials with a function and $(C_m)_{m=1}^\infty$ be a sequence of real numbers with $C_m\ge1$ for every $m\in\mathbb{N}$ and $C_1=1$. For all $n\in \mathbb{N}$ and Banach spaces $E$ and $F$, assume that:\\
	(i) the component $$\mathcal{Q}(^nE;F):=\mathcal{P}(^nE;F)\cap \mathcal{Q}$$ is a linear subspace of $\mathcal{P}(^nE;F)$ containing the $n$-homogenous polynomials of finite type,\\
	(ii) the restriction of $\|\cdot\|_{\mathcal{Q}}$ to $\mathcal{Q}(^nE;F)$ is a $p$-norm,\\
	(iii) $\|\widehat{I_n}\colon \mathbb{K} \longrightarrow\mathbb{K}~,~ \widehat{I_n}(\lambda)=\lambda^n\|_{\mathcal{Q}}=1$ for every $n$.\\
We say that $(\mathcal{Q},\|\cdot\|_{\cal Q})$ is a \textit{$p$-normed polynomial $(C_m)_{m=1}^\infty$-hyper-ideal} if it satisfies the\\
 {\bf Hyper-ideal property:} For $n,m \in \mathbb{N}$, and Banach spaces $E$, $F$, $G$ and $H$, if $P\in\mathcal{Q}(^nE;F)$, $Q\in\mathcal{P}(^mG;E)$ and $t\in\mathcal{L}(F;H)$, then $t\circ P\circ Q\in \mathcal{Q}(^{mn}G;H)$ and
$$\|t\circ P\circ Q\|_{\mathcal{Q}}\le (C_m)^n\cdot\|t\|\cdot \|P\|_{\mathcal{Q}}\cdot\|Q\|^n.$$

When $C_m=1$ for every $m\in\mathbb{N}$, we simply say that $(\mathcal{Q},\|\cdot\|_{\cal Q})$ is a {\it $p$-normed polynomial hyper-ideal}. If the components $\mathcal{Q}(^nE;F)$ are complete with respect to the topology generated by $\|\cdot\|_{\mathcal{Q}}$, then $(\mathcal{Q},\|\cdot\|_{\cal Q})$ is called a \textit{$p$-Banach polynomial $(C_m)_{m=1}^\infty$-hyper-ideal}. When $p=1$ we say that $(\mathcal{Q},\|\cdot\|_{\cal Q})$ is a \textit{normed (Banach) polynomial $(C_m)_{m=1}^\infty$-hyper-ideal}. 
\end{definition}

Because this definition is an intermediary concept between polynomial ideals and two-sided ideals, introduced in \cite{ewerton3}, many properties found there can be adapted for polynomials hyper-ideal. An notable exception can be made, regarding the class $\mathcal{P}_\mathcal{M}$, as we will see next.

\begin{definition}\rm Let $\mathcal{G}$ be a subclass of the class of all continuous multilinear operators between Banach spaces endowed with a function $\|\cdot\|_\mathcal{G}\colon \mathcal{G}\longrightarrow\mathbb{R}$. We define $\mathcal{P}^{\mathcal{G}}:=\{P\in\mathcal{P} : \ \mbox{there is} \ A\in\mathcal{G} \ \mbox{such that} \ P=\widehat{A}\},$ and
$$\|P\|_{\mathcal{P}^{\mathcal{G}}}=\inf\{\|A\|_{\mathcal{G}} : \ A \in \mathcal{G} \ \mbox{and} \ P=\widehat{A}\}.$$
\end{definition}

This class naturally forms a polynomial hyper-ideal, when $\mathcal{G}$ is a multilinear hyper-ideal, as ensures the next result.

\begin{prop}\label{hiph} If $({\cal H}, \|\cdot\|_{\cal H})$ is a $p$-normed ($p$-Banach) multilinear hyper-ideal, then $(\mathcal{P}^{\mathcal{H}}, \|\cdot\|_{\mathcal{P}^{\mathcal{H}}})$ is a $p$-normed ($p$-Banach) polynomial $\ap\frac{n^n}{n!}\fp_{n=1}^\infty$-hyper-ideal.\end{prop}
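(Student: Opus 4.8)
The plan is to verify the three defining conditions of a $p$-normed ($p$-Banach) polynomial $\left(\frac{n^n}{n!}\right)_{n=1}^\infty$-hyper-ideal for the pair $(\mathcal{P}^{\mathcal{H}}, \|\cdot\|_{\mathcal{P}^{\mathcal{H}}})$: the subspace/$p$-norm structure of each component, the normalization $\|\widehat{I_n}\|_{\mathcal{P}^{\mathcal{H}}}=1$, and the hyper-ideal property with the stated constants. First I would unwind the definition: $P\in\mathcal{P}^{\mathcal{H}}(^nE;F)$ means $P=\widehat{A}$ for some $A\in\mathcal{H}(^nE;F)$, and since $\widehat{A}=\widehat{A_s}$ while $\|A_s\|_{\mathcal{H}}\le\|A\|_{\mathcal{H}}$ is not automatic, the right way to handle the norm is to note that the infimum in the definition of $\|P\|_{\mathcal{P}^{\mathcal{H}}}$ ranges over all multilinear pre-images, so I get $\|P\|_{\mathcal{P}^{\mathcal{H}}}=\inf\{\|A\|_{\mathcal{H}}:\widehat{A}=P\}$ directly, and the finite-type polynomials lie in the class because finite-type multilinear maps lie in $\mathcal{H}$. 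For the $p$-norm on a fixed component, the triangle-type inequality $\|P_1+P_2\|^p_{\mathcal{P}^{\mathcal{H}}}\le\|P_1\|^p_{\mathcal{P}^{\mathcal{H}}}+\|P_2\|^p_{\mathcal{P}^{\mathcal{H}}}$ follows by the standard infimum argument: pick near-optimal $A_i$ with $\widehat{A_i}=P_i$, then $A_1+A_2$ witnesses $P_1+P_2$; positive homogeneity is immediate; and definiteness uses that $\|P\|_{\mathcal{P}^{\mathcal{H}}}=0$ forces a sequence $A_k\to 0$ in $\mathcal{H}$-norm hence in the sup norm, so $P=\widehat{A_k}\to 0$, giving $P=0$. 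The normalization $\|\widehat{I_n}\|_{\mathcal{P}^{\mathcal{H}}}=1$: the upper bound comes from $\widehat{I_n}=\widehat{I_n^{\mathcal{L}}}$ where $I_n^{\mathcal{L}}(\lambda_1,\dots,\lambda_n)=\lambda_1\cdots\lambda_n$ and $\|I_n^{\mathcal{L}}\|_{\mathcal{H}}=1$ by the multi-ideal axiom for $\mathcal{H}$; the lower bound follows because $\|P\|_{\mathcal{P}^{\mathcal{H}}}\ge\|P\|$ (sup norm) for every $P$ — indeed any $A$ with $\widehat{A}=P$ has $\|A\|_{\mathcal{H}}\ge\|A\|\ge\|\widehat{A}\|=\|P\|$ — and $\|\widehat{I_n}\|=1$.

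The heart of the argument is the hyper-ideal property. Let $P\in\mathcal{P}^{\mathcal{H}}(^nE;F)$, $Q\in\mathcal{P}(^mG;E)$, $t\in\mathcal{L}(F;H)$, and fix $\varepsilon>0$ and $A\in\mathcal{H}(^nE;F)$ with $\widehat{A}=P$ and $\|A\|_{\mathcal{H}}\le\|P\|_{\mathcal{P}^{\mathcal{H}}}+\varepsilon$. I want to exhibit a multilinear map $B\in\mathcal{H}(^{mn}G;H)$ with $\widehat{B}=t\circ P\circ Q$ and a good bound on $\|B\|_{\mathcal{H}}$. The natural candidate: write $\check{Q}\in\mathcal{L}(^mG;E)$ for the symmetric $m$-linear map associated to $Q$, and set
$$B=t\circ A\circ(\underbrace{\check{Q},\dots,\check{Q}}_{n});$$
that is, $B(z_{1,1},\dots,z_{1,m},\,\dots,\,z_{n,1},\dots,z_{n,m})=t\big(A(\check{Q}(z_{1,1},\dots,z_{1,m}),\dots,\check{Q}(z_{n,1},\dots,z_{n,m}))\big)$, regarding $G^{mn}$ as $n$ blocks of $m$ coordinates. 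This $B$ lies in $\mathcal{H}(^{mn}G;H)$ by the hyper-ideal property of $\mathcal{H}$ (composition of $A\in\mathcal{H}$ on the inside with the multilinear maps $\check{Q}$ and on the outside with the linear $t$), and it restricts on the diagonal to $z\mapsto t(A(Q(z),\dots,Q(z)))=t(P(Q(z)))=(t\circ P\circ Q)(z)$, so $\widehat{B}=t\circ P\circ Q$, which in particular proves membership $t\circ P\circ Q\in\mathcal{P}^{\mathcal{H}}$.

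For the norm estimate, the hyper-ideal inequality for $\mathcal{H}$ (in its multilinear form: composing an $n$-linear $A$ with $n$ $m$-linear maps and a linear $t$) gives $\|B\|_{\mathcal{H}}\le\|t\|\cdot\|A\|_{\mathcal{H}}\cdot\prod_{i=1}^n\|\check{Q}\|\le\|t\|\cdot(\|P\|_{\mathcal{P}^{\mathcal{H}}}+\varepsilon)\cdot\|\check{Q}\|^{n}$, and then the crucial point is the classical polarization estimate $\|\check{Q}\|\le\frac{m^m}{m!}\|Q\|$; therefore $\|t\circ P\circ Q\|_{\mathcal{P}^{\mathcal{H}}}\le\|B\|_{\mathcal{H}}\le\left(\frac{m^m}{m!}\right)^{n}\|t\|\cdot(\|P\|_{\mathcal{P}^{\mathcal{H}}}+\varepsilon)\cdot\|Q\|^{n}$, and letting $\varepsilon\to 0$ yields exactly the required bound with $C_m=\frac{m^m}{m!}$. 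The main obstacle — and the only genuinely non-formal input — is precisely this polarization constant: one must invoke that for $Q\in\mathcal{P}(^mG;E)$ the associated symmetric $m$-linear map satisfies $\|\check{Q}\|\le\frac{m^m}{m!}\|Q\|$ (this is Martin's sharp bound / the polarization formula estimate, standard in the references \cite{dineen,mujica}), and to be careful that the hyper-ideal axiom for $\mathcal{H}$ allows composing an $n$-linear map with $n$ distinct-arity inner maps — which it does, applied $n$ times one block at a time, or directly in the block form above. Finally, for the $p$-Banach statement: if the components $\mathcal{H}(^{mn}G;H)$ are complete, then completeness of $\mathcal{P}^{\mathcal{H}}(^nE;F)$ follows from the standard argument that the symmetrization map $\mathcal{H}(^nE;F)\to\mathcal{P}^{\mathcal{H}}(^nE;F)$, $A\mapsto\widehat{A}$, is a metric quotient map (its kernel is closed since $\|\cdot\|_{\mathcal{H}}$ dominates the sup norm), so $\mathcal{P}^{\mathcal{H}}(^nE;F)$ is isometrically a quotient of a $p$-Banach space, hence $p$-Banach.
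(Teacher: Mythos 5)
Your argument for the hyper-ideal property is essentially identical to the paper's: choose a near-optimal $A$ with $\widehat{A}=P$, form $t\circ A\circ(\check{Q},\dots,\check{Q})$, invoke the hyper-ideal property of $\mathcal{H}$, and convert $\|\check{Q}\|$ to $\frac{m^m}{m!}\|Q\|$ via polarization before passing to the infimum. The paper proves only this property and omits the remaining axioms, which you verify correctly as well, so your proposal is a complete version of the same proof.
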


\begin{proof}
We will prove only the hyper-ideal property. Given $t\in \mathcal{L}(F;H)$, $Q\in\mathcal{P}^\mathcal{H}(^nE;F)$ and $R\in\mathcal{P}(^mG;E)$, pick $A\in\mathcal{H}(^nE;F)$ such that $\widehat{A}=Q$ and $B\in\mathcal{L}(^mG;E)$ such that $\widehat{B}=R$. Then $t\circ A\circ (B,\ldots,B)\in\mathcal{H}(^{mn}G;H)$. Since $(t\circ A\circ (B,\ldots,B))^\wedge=t\circ Q \circ R$,
we conclude that $t\circ Q \circ R\in\mathcal{P}^\mathcal{H}(^{mn}G;H)$.
Furthermore,
\begin{align*}\|t\circ Q \circ R\|_{\mathcal{P}^\mathcal{H}}&\le\|t\circ A\circ (B,\ldots,B)\|_\mathcal{H}\le\|t\|\cdot\|A\|_\mathcal{H}\cdot\|B\|^n\\&\le \|t\|\cdot\|A\|_\mathcal{H}\cdot \left(\frac{m^m}{m!}\|\widehat{B}\|\right)^n =\ap\frac{m^m}{m!}\fp^n\|t\|\cdot\|A\|_\mathcal{H}\cdot\|R\|^n.\end{align*}
It follows that $\|t\circ Q \circ R\|_{\mathcal{P}^\mathcal{H}}\le \ap\frac{m^m}{m!}\fp^n\|t\|\cdot\|Q\|_\mathcal{H}\cdot\|R\|^n$.
\end{proof}

Let us give a brief reason why, contrary to the case of multilinear/polynomial ideals, it is not to be expected that $({\cal P}_{\cal H},\|\cdot\|_{{\cal P}_{\cal H}})$ is a polynomial hyper-ideal whenever $(\cal H,\|\cdot\|_{\cal H})$ is a multilinear hyper-ideal. Given a multilinear hyper-ideal $\cal H$, $P \in {\cal P}_{\cal H}(^nE;F)$ and $Q \in {\cal P}(^m G;E)$, $P \circ Q$ does not belong to ${\cal P}_{\cal H}$ in general. Indeed, all we know is that $\check P \in {\cal H}(^nE;F)$, and from the hyper-ideal property of $\cal H$ we can only conclude that $\check P \circ (\check Q, \ldots, \check Q)$ belongs to $\cal H$. For $P \circ Q$ to belong to ${\cal P}_{\cal H}$ we should have $(P \circ Q)^\vee$ in $\cal H$, so everything would work if $(P \circ Q)^\vee = \check P \circ (\check Q, \ldots, \check Q)$. There is no hope for this equality to hold, because the multilinear operator $\check P \circ (\check Q, \ldots, \check Q)$ is not symmetric in general.

Thus, we need to impose an extra condition on the multilinear hyper-ideal $(\cal H,\|\cdot\|_{\cal H})$ to guarantee that $({\cal P}_{\cal H}, \|\cdot\|_{{\cal P}_{\cal H}})$ is a polynomial hyper-ideal. As one could guess, the strongly symmetry is necessary. To arrive at this result the following lemma is needed.

\begin{lema}\label{lemasim}\
\begin{description}
\item $(a)$ A subclass $\mathcal{G}$ of the class of continuous multilinear operators between Banach spaces is symmetric if, and only if,  $\mathcal{P}^\mathcal{G}=\mathcal{P}_\mathcal{G}$.

\item $(b)$ If $0 < p \leq 1$ and $(\mathcal{H},\|\cdot\|_{\cal H})$ is a strongly symmetric $p$-normed multilinear hyper-ideal, then
$$\|P\|_{\mathcal{P}^\mathcal{H}}\leq \|P\|_{\mathcal{P}_\mathcal{H}} \leq (n!)^{\frac{1}{p}-1} \|P\|_{\mathcal{P}^\mathcal{H}}$$
for all $n \in \mathbb{N}$ and $P\in {\cal P}_{\cal H}(^nE;F)$.
\end{description}
\end{lema}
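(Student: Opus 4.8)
The plan is to establish both parts by carefully tracking the relation between the symmetrization operator and the ``forget the symmetry'' passage $A \mapsto \widehat A$.

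\textbf{Part (a).} First I would prove the easy inclusion that always holds: if $P \in \mathcal{P}^{\mathcal G}$, say $P = \widehat A$ with $A \in \mathcal G$, then I claim $\check P = A_s$. This is the standard fact that the symmetrization of \emph{any} multilinear representative of a polynomial equals the (unique) symmetric representative. So if $\mathcal G$ is symmetric, $A_s \in \mathcal G$, hence $\check P \in \mathcal G$, i.e. $P \in \mathcal{P}_{\mathcal G}$; this gives $\mathcal{P}^{\mathcal G} \subseteq \mathcal{P}_{\mathcal G}$ assuming symmetry. For the reverse inclusion, which holds with no hypothesis: if $P \in \mathcal{P}_{\mathcal G}$ then $\check P \in \mathcal G$ and $\widehat{\check P} = P$, so $P \in \mathcal{P}^{\mathcal G}$; thus $\mathcal{P}_{\mathcal G} \subseteq \mathcal{P}^{\mathcal G}$ unconditionally. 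Combining, symmetry of $\mathcal G$ implies $\mathcal{P}^{\mathcal G} = \mathcal{P}_{\mathcal G}$. For the converse direction of the ``if and only if'', suppose $\mathcal{P}^{\mathcal G} = \mathcal{P}_{\mathcal G}$ and take $A \in \mathcal G(^nE;F)$; I must show $A_s \in \mathcal G$. Let $P = \widehat A \in \mathcal{P}^{\mathcal G} = \mathcal{P}_{\mathcal G}$, so $\check P \in \mathcal G$; but $\check P = A_s$ by the identity above, so $A_s \in \mathcal G$, as required.

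\textbf{Part (b).} The left inequality is immediate from the definitions: for $P \in \mathcal{P}_{\mathcal H}$ the operator $\check P$ is \emph{one} admissible representative with $\widehat{\check P} = P$, so $\|P\|_{\mathcal{P}^{\mathcal H}} = \inf\{\|A\|_{\mathcal H} : \widehat A = P\} \le \|\check P\|_{\mathcal H} = \|P\|_{\mathcal{P}_{\mathcal H}}$. For the right inequality I would fix an arbitrary representative $A \in \mathcal H(^nE;F)$ with $\widehat A = P$ and use $\check P = A_s = \frac{1}{n!}\sum_{\sigma \in S_n} A_\sigma$. Since $\mathcal H$ is strongly symmetric, each $A_\sigma \in \mathcal H$ with $\|A_\sigma\|_{\mathcal H} = \|A\|_{\mathcal H}$. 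Applying the $p$-triangle inequality of the $p$-norm $\|\cdot\|_{\mathcal H}$ to the sum of the $n!$ terms $\frac{1}{n!}A_\sigma$ gives
$$\|\check P\|_{\mathcal H}^p \le \sum_{\sigma \in S_n} \left\| \tfrac{1}{n!} A_\sigma \right\|_{\mathcal H}^p = n! \cdot \tfrac{1}{(n!)^p}\,\|A\|_{\mathcal H}^p = (n!)^{1-p}\,\|A\|_{\mathcal H}^p,$$
so $\|P\|_{\mathcal{P}_{\mathcal H}} = \|\check P\|_{\mathcal H} \le (n!)^{\frac1p - 1}\|A\|_{\mathcal H}$. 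Taking the infimum over all such representatives $A$ yields $\|P\|_{\mathcal{P}_{\mathcal H}} \le (n!)^{\frac1p-1}\|P\|_{\mathcal{P}^{\mathcal H}}$.

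\textbf{Main obstacle.} The only genuinely non-formal point is the identity $\check P = A_s$ whenever $P = \widehat A$, i.e. that symmetrizing any multilinear representative recovers the canonical symmetric one; this is a classical polarization fact, but I would state it explicitly (or cite \cite{dineen, mujica}) since both parts hinge on it. Everything else is bookkeeping with infima and the $p$-norm inequality; one should just be slightly careful that in the $p$-normed setting the constant is $(n!)^{\frac1p-1}$ rather than $1$, which is exactly what the estimate above produces, and that it collapses to $1$ when $p = 1$, consistent with part (a).
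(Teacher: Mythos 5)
Your proof is correct and takes essentially the same route as the paper's: the identity $\check P = A_s$ for any representative $A$ with $\widehat A = P$, combined with strong symmetry and the $p$-triangle inequality, yields the constant $(n!)^{\frac1p-1}$ exactly as in the paper's computation, followed by taking the infimum over representatives. The only difference is that you spell out part (a) and the left inequality of (b), which the paper dismisses as immediate from the definitions.
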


\begin{proof} The item (a) is immediately obvious from the definitions. For the item (b), only the second inequality demands a proof. Let $P\in\mathcal{P}_\mathcal{H}(^nE;F)$ and $A\in\mathcal{H}(^nE;F)$ be such that $\widehat{A}=P$. Since $\check{P}=A_s$ and $\mathcal{H}$ is strongly symmetric, we have
\begin{align*}\|P\|_{\mathcal{P}_\mathcal{H}} &= \left(\|\check P\|_{\mathcal{H}}^p\right)^{1/p} =
\left(\|A_s\|_\mathcal{H}^p\right)^{1/p}=\left(\an\dfrac{1}{n!}\cdot \sum\limits_{\sigma\in S_n}A_\sigma\fn_\mathcal{H}^p\right)^{1/p}\\&\le \dfrac{1}{n!}\cdot\left(\sum\limits_{\sigma\in S_n}\|A_\sigma\|_\mathcal{H}^p\right)^{1/p}=\dfrac{1}{n!}\cdot\left(\sum\limits_{\sigma\in S_n}\|A\|_\mathcal{H}^p\right)^{1/p}=\frac{(n!)^{1/p}}{n!}\|A\|_\mathcal{H}.
\end{align*}
Taking the infimum over all such multilinear operators $A$ to get the desired inequality.\end{proof}

Thus we have:

\begin{prop}\label{corhip} If $(\mathcal{H},\|\cdot\|_{\cal H})$ is a strongly symmetric normed (Banach) multilinear hyper-ideal, then $\mathcal{P}^\mathcal{H}=\mathcal{P}_\mathcal{H}$ isometrically. In particular, $(\mathcal{P}_\mathcal{H},\|\cdot\|_{\mathcal{P}_\mathcal{H}})$ is a normed (Banach) polynomial $\ap\frac{n^n}{n!}\fp_{n=1}^\infty$-hyper-ideal. \end{prop}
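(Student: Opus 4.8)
The plan is to deduce Proposition~\ref{corhip} directly from Lemma~\ref{lemasim} and Proposition~\ref{hiph}. First I would invoke Lemma~\ref{lemasim}(a): a strongly symmetric hyper-ideal is in particular symmetric (since $A_s$ is an average of the $A_\sigma$, each of which lies in $\mathcal{H}$ with the same norm, and the components are linear subspaces), so $\mathcal{P}^\mathcal{H}=\mathcal{P}_\mathcal{H}$ as sets. Then I would use Lemma~\ref{lemasim}(b) with $p=1$: the exponent $\frac{1}{p}-1$ becomes $0$, so $(n!)^{1/p-1}=1$, and the chain of inequalities collapses to $\|P\|_{\mathcal{P}^\mathcal{H}}\le\|P\|_{\mathcal{P}_\mathcal{H}}\le\|P\|_{\mathcal{P}^\mathcal{H}}$, giving $\|P\|_{\mathcal{P}^\mathcal{H}}=\|P\|_{\mathcal{P}_\mathcal{H}}$ for every $n$ and every $P\in\mathcal{P}_\mathcal{H}(^nE;F)$. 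Hence $\mathcal{P}^\mathcal{H}=\mathcal{P}_\mathcal{H}$ isometrically.

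For the ``in particular'' clause I would note that $(\mathcal{H},\|\cdot\|_\mathcal{H})$, being a normed (Banach) multilinear hyper-ideal, is in particular a $1$-normed ($1$-Banach) multilinear hyper-ideal, so Proposition~\ref{hiph} applies and tells us that $(\mathcal{P}^\mathcal{H},\|\cdot\|_{\mathcal{P}^\mathcal{H}})$ is a normed (Banach) polynomial $\ap\frac{n^n}{n!}\fp_{n=1}^\infty$-hyper-ideal. Transporting this structure along the isometric identification $\mathcal{P}_\mathcal{H}=\mathcal{P}^\mathcal{H}$ just established yields that $(\mathcal{P}_\mathcal{H},\|\cdot\|_{\mathcal{P}_\mathcal{H}})$ is a normed (Banach) polynomial $\ap\frac{n^n}{n!}\fp_{n=1}^\infty$-hyper-ideal as well. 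The completeness assertion in the Banach case is immediate, since completeness is preserved under an isometric bijection of normed spaces componentwise.

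There is essentially no obstacle here: the proposition is a corollary packaging of the two preceding results, and the only mild point to be careful about is the bookkeeping of constants — specifically verifying that $p=1$ kills the factor $(n!)^{1/p-1}$ in Lemma~\ref{lemasim}(b), and that the hyper-ideal constant $\frac{m^m}{m!}$ coming from Proposition~\ref{hiph} is genuinely inherited (not improved) by $\mathcal{P}_\mathcal{H}$ under the isometry. One should also make explicit, if desired, that the finite-type polynomials lie in $\mathcal{P}_\mathcal{H}$ and that $\|\widehat{I_n}\|_{\mathcal{P}_\mathcal{H}}=1$, but both follow at once from the corresponding facts for $\mathcal{P}^\mathcal{H}$ via the isometry. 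I would therefore keep the proof to a few lines: establish symmetry, apply Lemma~\ref{lemasim}(a)--(b) with $p=1$ to get the isometric equality, and then quote Proposition~\ref{hiph}.

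\begin{proof}
Since $\mathcal{H}$ is strongly symmetric, for every $A\in\mathcal{H}(^nE;F)$ the operator $A_s=\frac{1}{n!}\sum_{\sigma\in S_n}A_\sigma$ is a linear combination of elements of $\mathcal{H}(^nE;F)$, hence belongs to $\mathcal{H}(^nE;F)$; thus $\mathcal{H}$ is symmetric and, by Lemma~\ref{lemasim}(a), $\mathcal{P}^\mathcal{H}=\mathcal{P}_\mathcal{H}$ as classes. Applying Lemma~\ref{lemasim}(b) with $p=1$, the factor $(n!)^{\frac{1}{p}-1}=(n!)^{0}=1$, so for all $n\in\mathbb{N}$ and $P\in\mathcal{P}_\mathcal{H}(^nE;F)$ we get $\|P\|_{\mathcal{P}^\mathcal{H}}\le\|P\|_{\mathcal{P}_\mathcal{H}}\le\|P\|_{\mathcal{P}^\mathcal{H}}$, that is, $\|P\|_{\mathcal{P}^\mathcal{H}}=\|P\|_{\mathcal{P}_\mathcal{H}}$. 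Therefore $\mathcal{P}^\mathcal{H}=\mathcal{P}_\mathcal{H}$ isometrically.

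Now, a normed (Banach) multilinear hyper-ideal is in particular a $1$-normed ($1$-Banach) multilinear hyper-ideal, so Proposition~\ref{hiph} yields that $(\mathcal{P}^\mathcal{H},\|\cdot\|_{\mathcal{P}^\mathcal{H}})$ is a normed (Banach) polynomial $\ap\frac{n^n}{n!}\fp_{n=1}^\infty$-hyper-ideal. By the isometric identification just proved, the same holds for $(\mathcal{P}_\mathcal{H},\|\cdot\|_{\mathcal{P}_\mathcal{H}})$.
\end{proof}
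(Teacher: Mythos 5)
Your proposal is correct and follows essentially the same route as the paper: Lemma \ref{lemasim}(a) for the set equality, Lemma \ref{lemasim}(b) with $p=1$ to collapse the factor $(n!)^{1/p-1}$ to $1$ and get the isometry, and then Proposition \ref{hiph} for the hyper-ideal conclusion. Your extra remark that strong symmetry implies symmetry (via $A_s=\frac{1}{n!}\sum_{\sigma}A_\sigma$ and linearity of the components) is a small justification the paper leaves implicit, but otherwise the two arguments coincide.
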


\begin{proof} By item (a) of \ref{lemasim} $\mathcal{P}^\mathcal{H}=\mathcal{P}_\mathcal{H}$ and by the item (b) of the same lemma, with $p=1$, $\|\cdot\|_{\mathcal{P}_\mathcal{H}}=\|\cdot\|_{\mathcal{P}^\mathcal{H}}$. The result follows from Proposition \ref{hiph}.\end{proof}

With this result we can obtain a version of Proposition \ref{PP1.9.} for hyper-ideals.

\begin{cor}Let $\left(\mathcal{H}_n, \|\cdot \|_{\mathcal{H}_n}\right)_{n=1}^\infty$ be a sequence of symmetrical hyper-ideals satisfying the conditions $($CH1$)$ and $($CH3$)$ of Definition \ref{HCoh}. Then, the sequence of homogenous polynomials $\left(\mathcal{P}_{\mathcal{H}_n}, \|\cdot \|_{\mathcal{P}_{\mathcal{H}_n}} \right)_{n=1}^\infty$ is a hyper-ideal that satisfies the conditions $($CH2$)$ and $($CH4$)$.\end{cor}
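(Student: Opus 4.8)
The plan is to combine the two results already established: Proposition \ref{PP1.9.}, which handles the coherence conditions (CH2) and (CH4), and Proposition \ref{corhip}, which handles the hyper-ideal structure. The corollary is essentially a bookkeeping statement that these two conclusions hold simultaneously under the stated hypotheses, so the proof will mostly consist of checking that the hypotheses of each proposition are met and then invoking them.

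First I would observe that a hyper-ideal is in particular a multi-ideal (the hyper-ideal property with $Q$ a finite-type operator of the form $u_1\otimes\cdots\otimes u_m$ composed appropriately, or more directly by restricting to the case where the inner polynomials are linear, recovers the multi-ideal property). Hence each $\mathcal{H}_n$ is a symmetric multi-ideal satisfying (CH1) and (CH3), so Proposition \ref{PP1.9.} applies verbatim and gives that $\left(\mathcal{P}_{\mathcal{H}_n}, \|\cdot\|_{\mathcal{P}_{\mathcal{H}_n}}\right)_{n=1}^\infty$ satisfies (CH2) and (CH4). That is the first half of the claim.

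Next I would address the hyper-ideal structure of the polynomial class $\mathcal{P}_{\mathcal{H}}$ (writing $\mathcal{H}$ for the union of the $\mathcal{H}_n$). Here the subtlety, flagged in the discussion preceding Lemma \ref{lemasim}, is that symmetry alone of $\mathcal{H}$ is \emph{not} enough to force $\mathcal{P}_{\mathcal{H}}$ to be a hyper-ideal, because $(P\circ Q)^\vee$ need not equal $\check{P}\circ(\check{Q},\ldots,\check{Q})$. Proposition \ref{corhip} resolves this, but it is stated for \emph{strongly} symmetric hyper-ideals. So the honest version of the proof must either (i) observe that a symmetric hyper-ideal is automatically strongly symmetric under the norm $\|\cdot\|_{\mathcal{H}}$ — which is false in general — or (ii) replace the word ``symmetrical'' in the corollary's hypothesis by ``strongly symmetric,'' which is presumably the intended reading in light of Proposition \ref{corhip}. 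I would therefore read the hypothesis as strong symmetry, apply Proposition \ref{corhip} to conclude $\mathcal{P}_{\mathcal{H}} = \mathcal{P}^{\mathcal{H}}$ isometrically and that this common class is a normed (Banach) polynomial $\left(\frac{n^n}{n!}\right)_{n=1}^\infty$-hyper-ideal, and then combine with the first half.

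The main obstacle, then, is precisely this gap between ``symmetric'' and ``strongly symmetric'': the proof of the hyper-ideal property for $\mathcal{P}_{\mathcal{H}}$ runs through the identification with $\mathcal{P}^{\mathcal{H}}$ (Lemma \ref{lemasim}(a) needs only symmetry, but the norm comparison in Lemma \ref{lemasim}(b) needs strong symmetry), and without the norm control one cannot transfer the hyper-ideal inequality from $\mathcal{H}$ to $\mathcal{P}_{\mathcal{H}}$. Apart from that, the argument is a direct concatenation: verify $\mathcal{H}_n$ is a symmetric multi-ideal satisfying (CH1), (CH3); invoke Proposition \ref{PP1.9.} for (CH2), (CH4); invoke Proposition \ref{corhip} (with strong symmetry) for the hyper-ideal property of $\mathcal{P}_{\mathcal{H}}$; conclude. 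No new computation is required.
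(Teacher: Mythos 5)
Your proof is correct and is exactly the paper's intended argument: the corollary is stated without proof immediately after Proposition \ref{corhip}, precisely as the concatenation of Proposition \ref{PP1.9.} (which gives (CH2) and (CH4), noting as you do that a hyper-ideal is in particular a multi-ideal) with Proposition \ref{corhip} (which gives the polynomial hyper-ideal structure of $\mathcal{P}_{\mathcal{H}_n}$). Your observation that the hypothesis ``symmetrical'' must be read as ``strongly symmetric'' for the norm inequality in the hyper-ideal property to transfer from $\mathcal{H}_n$ to $\mathcal{P}_{\mathcal{H}_n}$ --- since Lemma \ref{lemasim}(a) needs only symmetry but the norm identification in Lemma \ref{lemasim}(b) and Proposition \ref{corhip} requires strong symmetry --- is a correct and worthwhile catch about the statement as written.
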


Before we finish the section it is worth noting that there are classes with distinct properties that satisfy the Definitions \ref{HCoh} and \ref{HComp}. 
In the examples in the following sections, all classes are at least multi-ideals. For the symmetric classes $(\mathcal{M}_n)_{n=1}^N$ that satisfy the conditions $($CH1$)$ and $($CH3$)$, we can obtain, according to Proposition \ref{PP1.9.}, the conditions $($CH2$)$ and $($CH4$)$ of Definition \ref{HCoh} when we take the class $(\mathcal{P}_{\mathcal{M}_n})_{n=1}^N$, thus concluding that the pair $\left( \left(\mathcal{P}_{\mathcal{M}_n}, \|\cdot\|_{\mathcal{P}_{\mathcal{M}_n}} \right), \left(\mathcal{M}_n, \|\cdot\|_{\mathcal{M}_n}\right)\right)_{n=1}^N$ is strongly coherent. It is also worth noticing that even with condition $($CH5$)$ giving the form of polynomial ideal associated with $\mathcal{M}_n$ in order to become
the pair 
strongly coherent (or compatible), we still have a variety of norms (which are not necessarily equivalent) $\|\cdot\|_n$ that we can choose in $\mathcal{P}_{\mathcal{M}_n}$ that still maintain the pair $\left( \left(\mathcal{P}_{\mathcal{M}_n}, \|\cdot\|_{n} \right), \left(\mathcal{M}_n, \|\cdot\|_{\mathcal{M}_n}\right)\right)_{n=1}^N$ strongly coherent (or compatible), we will see some of these norms in the examples given in the next sections.
Also note that the multi (hyper)-ideal properties are not directly related to the conditions of Definition \ref{HCoh}, \ref{HComp} or even the Definition 3.1 of \cite{joilson}. For instance, we can have multi-ideals that satisfy our definitions of strongly coherent and strongly compatible and hyper-ideals that do not. We will also explore these notions in the followings sections.



\section{Composition ideal}

We begin by remembering the definition of the object that will be studied in this section. Such a definition could for instance be found in \cite{bpr2}.

\begin{definition}\rm
Let $\mathcal{I}$ be an operator ideal and $A\in\mathcal{L}(E_1,\ldots, E_n; F)$. We write $A \in \mathcal{I}\circ \mathcal{L}_{n}(E_1,\ldots, E_n; F)$ if there is a Banach space $G$, a linear operator $u \in \mathcal{I}(G; F)$ and an $n$-linear operator $B \in \mathcal{L}(E_1,\dots, E_n; G)$, such that $$A = u \circ B.$$ If $(\mathcal{I}, \left\|\cdot \right\|_{\mathcal{I}})$ is a $p$-normed ideal, we define
$$\left\|A \right\|_{\mathcal{I} \circ \mathcal{L}} = \inf\{\left\|u \right\|_{\mathcal{I}} \|B\|\},$$ where the infimum is taken above all representations $u \circ B$ of $A$.
\end{definition}

The composition ideal is a hyper-ideal; for more details see \cite{ewerton}. In that work we also see that, when $(\mathcal{I}, \left\|\cdot \right\|_{\mathcal{I}})$ is an $p$-Banach ideal, then the composition ideal is also an $p$-Banach hyper-ideal.

Now, for the polynomial case, we recall the

\begin{definition}\label{CompositionPolynomial}\rm
Given an operator ideal $\mathcal{I}$ and an $n$-homogeneous polynomial $P \in \mathcal{P}(^nE; F)$, we write $P \in \mathcal{I}\circ \mathcal{P}_{n}(^nE; F)$ if there is a Banach space $G$, a linear operator $u \in \mathcal{I}(G; F)$ and a $n$-homogeneous polynomial $B \in \mathcal{P}(^n E; F)$, such that, $$P = u \circ B.$$ If $(\mathcal{I}, \left\|\cdot \right\|_{\mathcal{I}})$ is a $p$-normed ideal, we defined
$$\left\|P \right\|_{\mathcal{I} \circ \mathcal{P}} = \inf{\left\|u \right\|_{\mathcal{I}} \|P\|}.$$
\end{definition}


The next result is immediate. Therefore, the proof will be omitted.

\begin{prop} When $(\mathcal{I}, \left\|\cdot \right\|_{\mathcal{I}})$ is an $p$-Banach ideal, then $\mathcal{I}\circ \mathcal{P}$ will be an $p$-Banach polynomial hyper-ideal.\end{prop}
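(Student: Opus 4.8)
The plan is to verify the three defining conditions of a $p$-Banach polynomial hyper-ideal for $(\mathcal{I}\circ\mathcal{P},\|\cdot\|_{\mathcal{I}\circ\mathcal{P}})$, following closely the template already used for the multilinear composition hyper-ideal in \cite{ewerton} and mimicking the proof of Proposition \ref{hiph}. First I would check that each component $\mathcal{I}\circ\mathcal{P}(^nE;F)$ is a linear subspace of $\mathcal{P}(^nE;F)$ containing the finite type polynomials: if $P=u\circ B$ and $Q=v\circ C$ with $u,v\in\mathcal{I}$ and $B,C$ continuous $n$-homogeneous polynomials, then writing $G\oplus H$ (with an appropriate $p$-norm) and the operator $(x,y)\mapsto u(x)+v(y)$, which lies in $\mathcal{I}$ by the ideal property applied to the coordinate inclusions and projections, one factors $P+Q$ through $\mathcal{I}$; scalar multiples are trivial; and a finite type polynomial $\sum\varphi_j^n\otimes y_j$ factors through a finite rank (hence $\mathcal{I}$) operator $\mathbb{K}^m\to F$. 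That $\|\cdot\|_{\mathcal{I}\circ\mathcal{P}}$ restricts to a $p$-norm on each component, and the normalization $\|\widehat I_n\|_{\mathcal{I}\circ\mathcal{P}}=1$ (using $\widehat I_n=\mathrm{id}_{\mathbb{K}}\circ\widehat I_n$ and $\|\mathrm{id}_{\mathbb{K}}\|_{\mathcal I}=1$, together with the lower bound $\|u\|_{\mathcal I}\|B\|\ge\|u\circ B\|$), are then routine.

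Next I would establish the hyper-ideal property. Given $t\in\mathcal{L}(F;H)$, $P\in\mathcal{I}\circ\mathcal{P}(^nE;F)$ and $Q\in\mathcal{P}(^mG;E)$, take a representation $P=u\circ B$ with $u\in\mathcal{I}(W;F)$ and $B\in\mathcal{P}(^nE;W)$. Then $t\circ P\circ Q=(t\circ u)\circ(B\circ Q)$, where $t\circ u\in\mathcal{I}(W;H)$ by the operator ideal property and $B\circ Q\in\mathcal{P}(^{mn}G;W)$ since the composition of a continuous $n$-homogeneous polynomial with a continuous $m$-homogeneous polynomial is a continuous $mn$-homogeneous polynomial (with $\|B\circ Q\|\le\|B\|\,\|Q\|^n$). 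This exhibits $t\circ P\circ Q$ as an element of $\mathcal{I}\circ\mathcal{P}(^{mn}G;H)$, and
\[
\|t\circ P\circ Q\|_{\mathcal{I}\circ\mathcal{P}}\le\|t\circ u\|_{\mathcal{I}}\,\|B\circ Q\|\le\|t\|\,\|u\|_{\mathcal{I}}\,\|B\|\,\|Q\|^n.
\]
Taking the infimum over all representations $u\circ B$ of $P$ yields $\|t\circ P\circ Q\|_{\mathcal{I}\circ\mathcal{P}}\le\|t\|\,\|P\|_{\mathcal{I}\circ\mathcal{P}}\,\|Q\|^n$, i.e. the hyper-ideal constants are $C_m=1$ for all $m$. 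Note that, unlike Proposition \ref{hiph} where the factor $\frac{m^m}{m!}$ arose from comparing $\|\widehat B\|$ with $\|B\|$, here no such loss occurs because we never pass through a symmetric multilinear operator — we work with the polynomial $B$ directly.

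Finally, for completeness when $(\mathcal{I},\|\cdot\|_{\mathcal I})$ is a $p$-Banach ideal, I would show each $\big(\mathcal{I}\circ\mathcal{P}(^nE;F),\|\cdot\|_{\mathcal{I}\circ\mathcal{P}}\big)$ is complete by the standard argument for composition ideals: a $\|\cdot\|_{\mathcal{I}\circ\mathcal{P}}$-Cauchy sequence is $\|\cdot\|$-Cauchy in $\mathcal{P}(^nE;F)$ hence converges there, and summing a rapidly convergent subsequence one builds a single factorization through an $\ell_p$-sum of the spaces $W_k$, using completeness of $\mathcal{I}$ to place the assembled operator in $\mathcal{I}$; the details are identical to the multilinear case treated in \cite{ewerton}. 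The main (and only mildly delicate) obstacle is the subadditivity/$p$-norm verification for $\|\cdot\|_{\mathcal{I}\circ\mathcal{P}}$ and the completeness argument, both of which require the careful $\ell_p$-direct-sum bookkeeping; everything else is a direct transcription of the multilinear composition hyper-ideal proof with multilinear maps replaced by homogeneous polynomials. Since, as the authors remark, this is essentially the polynomial analogue of a known multilinear fact, the proof is omitted in the text; the above is the argument one would write out.
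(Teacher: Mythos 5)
Your proposal is correct and is exactly the routine adaptation of the multilinear composition hyper-ideal argument that the paper has in mind when it omits the proof as immediate: the key step $t\circ P\circ Q=(t\circ u)\circ(B\circ Q)$ with $\|B\circ Q\|\le\|B\|\,\|Q\|^n$ gives the hyper-ideal property with $C_m=1$, and your observation that no $\frac{m^m}{m!}$ factor arises here (unlike in Proposition \ref{hiph}) because one never passes through the symmetric multilinear operator is accurate. You also correctly read the intended definition of $\|\cdot\|_{\mathcal{I}\circ\mathcal{P}}$ as $\inf\|u\|_{\mathcal{I}}\|B\|$ over representations $P=u\circ B$ with $B\in\mathcal{P}(^nE;G)$, silently repairing two typos in Definition \ref{CompositionPolynomial}.
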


We already know that $P \in \mathcal{I}\circ \mathcal{P}(^nE; F)$ if, and only if, $\check{P} \in \mathcal{I}\circ \mathcal{L}(E^n; F)$ (see \cite[Proposition 3.2 (b)]{bpr2}) and that the norms $\|\cdot\|_{\mathcal{I}\circ \mathcal{P}}$ and $\|\cdot\|_{\mathcal{P}_{\mathcal{I}\circ \mathcal{L}}}$ are equivalent (see \cite[Proposition 3.7 (b)]{bpr2}). Therefore, we can take the definition of $\mathcal{P}_{\mathcal{I}\circ\mathcal{L}}$ instead of the Definition \ref{CompositionPolynomial}.

We will verify that the sequence  $\left(\mathcal{P}_{\mathcal{I}\circ\mathcal{L}_n}, \mathcal{I}\circ \mathcal{L}_{n} \right)_{n=1}^{N}$ is strongly coherent and strongly compatible with $\mathcal{I}$. This fact will be consequence of the following propositions. The next result can be proved with the same ideas of \cite[Proposition 3.1]{CDM09}.

\begin{prop}Let $T \in \mathcal{I}\circ \mathcal{L}_{n+1}(E_1,\dots, E_{n+1}; F)$ and $a_j \in E_j$, where $j = 1,\dots, n+1$. Then
$$T_{a_j} \in \mathcal{I}\circ \mathcal{L}_{n}(E_1,\dots,E_{j-1}, E_{j+1},\dots, E_{n+1}; F)$$
and
$$\left\|T_{a_j}\right\|_{\mathcal{I}\circ \mathcal{L}_{n}} \le \left\|T\right\|_{\mathcal{I}\circ \mathcal{L}_{n+1}}\|a_j\|_{E_j}.$$
\end{prop}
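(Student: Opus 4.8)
The plan is to unwind the definitions and produce an explicit representation of $T_{a_j}$ as a composition $u \circ B$ with $u \in \mathcal{I}$. Fix a representation $T = u \circ B$ where $G$ is a Banach space, $u \in \mathcal{I}(G;F)$ and $B \in \mathcal{L}(E_1,\dots,E_{n+1};G)$, with $\|u\|_{\mathcal{I}}\|B\| \le \|T\|_{\mathcal{I}\circ\mathcal{L}_{n+1}} + \varepsilon$. The key observation is that forming the partial evaluation commutes with post-composition by a linear map: since $\check{T} = u \circ \check{B}$ (because $u$ is linear), we get for every $x$ that $T_{a_j}(x) = \check{T}(a_j, x, \dots, x) = u\big(\check{B}(a_j, x, \dots, x)\big) = u\big(B_{a_j}(x)\big)$ where $B_{a_j} \in \mathcal{L}(E_1,\dots,E_{j-1},E_{j+1},\dots,E_{n+1};G)$ is the corresponding partial evaluation of the ordinary (not necessarily symmetric) $n$-linear map $B$. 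Hence $T_{a_j} = u \circ B_{a_j}$ is itself a valid representation witnessing $T_{a_j} \in \mathcal{I}\circ\mathcal{L}_n$.

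For the norm estimate, I would just read it off this representation. We have $\|B_{a_j}\| \le \|B\|\,\|a_j\|_{E_j}$, a standard multilinear-algebra fact (evaluate at points of norm $\le 1$ in the remaining variables). Therefore
$$\|T_{a_j}\|_{\mathcal{I}\circ\mathcal{L}_n} \le \|u\|_{\mathcal{I}}\,\|B_{a_j}\| \le \|u\|_{\mathcal{I}}\,\|B\|\,\|a_j\|_{E_j} \le \big(\|T\|_{\mathcal{I}\circ\mathcal{L}_{n+1}} + \varepsilon\big)\|a_j\|_{E_j}.$$
Letting $\varepsilon \to 0$ gives the claimed inequality with constant $1$. (In the $p$-normed case the same argument goes through verbatim, since the infimum defining $\|\cdot\|_{\mathcal{I}\circ\mathcal{L}}$ has exactly the same shape.)

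There is essentially no serious obstacle here — this is why the authors flag that it follows from the ideas of \cite[Proposition 3.1]{CDM09}. The only point that requires a moment of care is making sure that the partial evaluation is taken with respect to the \emph{same} (arbitrary, non-symmetrized) representative $B$, so that the identity $T_{a_j} = u \circ B_{a_j}$ is literally true as a map; one does not need symmetry of $B$ anywhere, because $T_{a_j}$ is defined through $\check{T}$ and $\check{T} = u \circ \check{B}$ while $\check{B} = (B)^{\vee}$ only matters through its values, which agree with the symmetrization of $B_{a_j}$. The cleanest exposition simply works with $\check{T}$ throughout: write $\check{T} = u\circ \check{B}$, note $T_{a_j} = u \circ \check{B}_{a_j}$ where $\check{B}_{a_j} = (\check{B})_{a_j} \in \mathcal{L}(^{n}E';G)$ in the symmetric setting, and bound $\|\check{B}_{a_j}\| \le \|\check{B}\|\,\|a_j\|$. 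Taking infimum over representations finishes the proof.
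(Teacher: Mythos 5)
Your argument is correct and is exactly the factorization argument the paper has in mind when it defers to \cite[Proposition 3.1]{CDM09}: fix a representation $T=u\circ B$, observe that $T_{a_j}=u\circ B_{a_j}$ with $\|B_{a_j}\|\le\|B\|\,\|a_j\|$, and take the infimum over representations. One expository slip worth fixing: here $T$ is multilinear on possibly \emph{distinct} spaces $E_1,\dots,E_{n+1}$, so $\check{T}$ is not defined and $T_{a_j}$ is simply the partial evaluation $(x_1,\dots,x_{j-1},x_{j+1},\dots,x_{n+1})\mapsto T(x_1,\dots,a_j,\dots,x_{n+1})$, not $\check{T}(a_j,x,\dots,x)$; the identity $T_{a_j}=u\circ B_{a_j}$ follows directly from $T=u\circ B$ by plugging $a_j$ into the $j$-th slot, with no symmetrization anywhere. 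Dropping the $\check{T}$ detour leaves a complete and correct proof.
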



\begin{prop}
Let $T \in \mathcal{I}\circ \mathcal{L}_{n}(E_1,\dots, E_n; F)$ and $Q \in \mathcal{L}(E_{n+1},\dots, E_{n+m})$, then
$$QT \in \mathcal{I}\circ \mathcal{L}_{n+m}(E_1,\dots, E_{n+m}; F)$$
and $$\left\|QT\right\|_{\mathcal{I}\circ \mathcal{L}_{n+m}} \le \|Q\|\|T\|_{\mathcal{I}\circ \mathcal{L}_{n}}.$$
\end{prop}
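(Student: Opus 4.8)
The plan is to unravel the definition of the composition ideal $\mathcal{I}\circ\mathcal{L}$ and verify condition (CH3) directly from a representation of $T$. Given $T \in \mathcal{I}\circ\mathcal{L}_n(E_1,\dots,E_n;F)$, there is a Banach space $G$, a linear operator $u \in \mathcal{I}(G;F)$ and an $n$-linear map $B \in \mathcal{L}(E_1,\dots,E_n;G)$ with $T = u\circ B$. The key observation is that the multilinear map $QB \in \mathcal{L}(E_1,\dots,E_{n+m};G)$ defined by $(QB)(x_1,\dots,x_{n+m}) = Q(x_{n+1},\dots,x_{n+m})\,B(x_1,\dots,x_n)$ satisfies $u\circ(QB) = QT$, and $u \in \mathcal{I}(G;F)$ is unchanged. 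Hence $QT = u\circ(QB)$ is a valid representation witnessing $QT \in \mathcal{I}\circ\mathcal{L}_{n+m}(E_1,\dots,E_{n+m};F)$.

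For the norm estimate, I would start from an arbitrary representation $T = u\circ B$ and use $\|QB\| \le \|Q\|\,\|B\|$, which is the standard estimate for the tensor-type product of a scalar multilinear form with a vector-valued multilinear map (it follows at once from the submultiplicativity of the sup norm on the relevant product of unit balls). This gives
\[
\|QT\|_{\mathcal{I}\circ\mathcal{L}_{n+m}} \le \|u\|_{\mathcal{I}}\,\|QB\| \le \|u\|_{\mathcal{I}}\,\|Q\|\,\|B\| = \|Q\|\,\bigl(\|u\|_{\mathcal{I}}\,\|B\|\bigr).
\]
Taking the infimum over all representations $u\circ B$ of $T$ on the right-hand side yields $\|QT\|_{\mathcal{I}\circ\mathcal{L}_{n+m}} \le \|Q\|\,\|T\|_{\mathcal{I}\circ\mathcal{L}_n}$, which is exactly the claimed inequality with constant $\beta_3 = 1$.

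There is essentially no serious obstacle here; the only point requiring a small amount of care is checking that $u\circ(QB) = QT$ as $(n+m)$-linear maps, which is a direct computation: for $(x_1,\dots,x_{n+m})$,
\[
u\bigl((QB)(x_1,\dots,x_{n+m})\bigr) = u\bigl(Q(x_{n+1},\dots,x_{n+m})\,B(x_1,\dots,x_n)\bigr) = Q(x_{n+1},\dots,x_{n+m})\,T(x_1,\dots,x_n),
\]
using linearity of $u$, and the right-hand side is by definition $(QT)(x_1,\dots,x_{n+m})$. One should also note that $QB$ is indeed continuous and $(n+m)$-linear, and that $\mathcal{I}\circ\mathcal{L}_{n+m}$ is nonempty precisely because such a representation exists. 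I would also remark that this is the multilinear counterpart of the analogous (and already used) stability of $\mathcal{P}_{\mathcal{I}\circ\mathcal{L}}$, so no new machinery beyond the definition of the composition ideal is needed.
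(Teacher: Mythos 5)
Your proof is correct and follows essentially the same route as the paper: both write $T = u\circ B$, observe that $QT = u\circ(QB)$ with $QB \in \mathcal{L}(E_1,\dots,E_{n+m};G)$, and take the infimum over representations to get the norm estimate. If anything, your use of the inequality $\|QB\|\le\|Q\|\,\|B\|$ is slightly more careful than the paper's asserted equality, which is all that is needed anyway.
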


\begin{proof}
Let  $T \in \mathcal{I}\circ \mathcal{L}_{n}(E_1,\dots, E_n; F)$ and $Q \in \mathcal{L}(E_{n+1},\dots, E_{n+m})$, there is a Banach space $G$, a linear operator $u \in \mathcal{I}(G; F)$ and an $n$-linear mapping $B : E_1 \times \cdots \times E_n \longrightarrow F$, such that $T=u\circ B$. Therefore, $QT(x_1,\dots, x_{n+m})= u \circ QB (x_1,\dots, x_{n+m}).$
Because $QB \in \mathcal{L}(E_1,\dots, E_{n+m}; F)$, then $QT \in \mathcal{I}\circ \mathcal{L}_{n+m}(E_1,\dots, E_{n+m}; F)$. We also have
$$\left\|QT \right\|_{\mathcal{I}\circ \mathcal{L}_{n+m}} \le \inf{\left\{\|u\|_{\mathcal{I}}.\|QB\|\right\}} = \inf{\left\{\|u\|_{\mathcal{I}}.\|B\|\right\}}\|Q\|= \|T\|_{\mathcal{I}\circ \mathcal{L}_{n}}\|Q\|.$$
where the infimum are taken over every representation of $T=u\circ B$.
\end{proof}

Since $(\mathcal{I}\circ \mathcal{L}_{n})$ is symmetric, it follows from Proposition \ref{PP1.9.} that

\begin{teo}\label{icompcc}
The pair $\left(\mathcal{P}_{\mathcal{I}\circ\mathcal{L}_n}, \mathcal{I}\circ \mathcal{L}_{n} \right)_{n=1}^{N}$ is strongly coherent and compatible with the ideal $\mathcal{I}$.\end{teo}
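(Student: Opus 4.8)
The strategy is to collect the five conditions (CH1)--(CH5) of Definition \ref{HCoh} one at a time, drawing on the pieces already assembled in the excerpt, and then invoke Remark \ref{rmrk}(3) to get strong compatibility for free. First, (CH1) and (CH3) for the multilinear family $\left(\mathcal{I}\circ\mathcal{L}_n\right)_{n=1}^N$ are exactly the two Propositions proved immediately above the theorem, both with constant $1$; so $\beta_1=\beta_3=1$. Next, since $\mathcal{I}\circ\mathcal{L}_n$ is a symmetric class of multi-ideals (if $A=u\circ B$ then $A_s = u\circ B_s$, so $A_s\in\mathcal{I}\circ\mathcal{L}_n$), Proposition \ref{PP1.9.} applies and yields (CH2) and (CH4) for the associated polynomial family $\left(\mathcal{P}_{\mathcal{I}\circ\mathcal{L}_n}\right)_{n=1}^N$, with $\beta_2=\beta_1=1$. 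Finally, (CH5) holds by the very definition of $\mathcal{P}_{\mathcal{I}\circ\mathcal{L}_n}$: $P\in\mathcal{P}_{\mathcal{I}\circ\mathcal{L}_n}$ means precisely $\check P\in\mathcal{I}\circ\mathcal{L}_n$. This establishes strong coherence of $\left(\mathcal{P}_{\mathcal{I}\circ\mathcal{L}_n},\mathcal{I}\circ\mathcal{L}_n\right)_{n=1}^N$ with all three constants equal to $1$.

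Then, because $\beta_1=\beta_2=\beta_3=1$, item (3) of Remark \ref{rmrk} gives that strong coherence implies strong compatibility with $\mathcal{I} = \mathcal{I}\circ\mathcal{L}_1 = \mathcal{P}_{\mathcal{I}\circ\mathcal{L}_1}$ (note that $\mathcal{I}\circ\mathcal{L}_1 = \mathcal{I}$ as operator ideals, and likewise $\mathcal{P}_{\mathcal{I}\circ\mathcal{L}_1}=\mathcal{I}$, so the base-case identification required by Definitions \ref{HCoh} and \ref{HComp} is satisfied). One should remark that, by the equivalence of $\|\cdot\|_{\mathcal{I}\circ\mathcal{P}}$ and $\|\cdot\|_{\mathcal{P}_{\mathcal{I}\circ\mathcal{L}}}$ recalled in the paragraph preceding the theorem, the same conclusion holds if one prefers to work with the polynomial composition ideal $\mathcal{I}\circ\mathcal{P}_n$ of Definition \ref{CompositionPolynomial} in place of $\mathcal{P}_{\mathcal{I}\circ\mathcal{L}_n}$ — only the constants would change by the equivalence factors, which does not affect strong coherence or compatibility.

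\textbf{On the main difficulty.} There is essentially no obstacle: every ingredient has already been isolated in the excerpt, and the argument is a matter of bookkeeping — checking that the symmetry hypothesis of Proposition \ref{PP1.9.} is met and that the base components coincide with $\mathcal{I}$. The only point that deserves a line of care is the verification that $\mathcal{I}\circ\mathcal{L}_n$ is symmetric in the sense required (i.e. $A_s\in\mathcal{I}\circ\mathcal{L}_n(^nE;F)$ whenever $A\in\mathcal{I}\circ\mathcal{L}_n(^nE;F)$), which follows at once from the factorization $A=u\circ B \Rightarrow A_s = u\circ B_s$. If one wanted the sharper isometric statement $\mathcal{P}^{\mathcal{I}\circ\mathcal{L}}=\mathcal{P}_{\mathcal{I}\circ\mathcal{L}}$, one would additionally invoke Proposition \ref{corhip}, using that the composition ideal is a strongly symmetric hyper-ideal; but this refinement is not needed for the statement as phrased.
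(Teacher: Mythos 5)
Your proposal is correct and follows essentially the same route as the paper: the paper derives (CH1) and (CH3) with constant $1$ from the two preceding propositions, observes that $\mathcal{I}\circ\mathcal{L}_n$ is symmetric so that Proposition \ref{PP1.9.} yields (CH2) and (CH4) for $\mathcal{P}_{\mathcal{I}\circ\mathcal{L}_n}$, notes (CH5) holds by definition of $\mathcal{P}_{\mathcal{M}}$, and concludes compatibility via Remark \ref{rmrk}(3) (as packaged in Remark \ref{rmk2.6.}(1)). Your explicit verification of the symmetry via $A=u\circ B\Rightarrow A_s=u\circ B_s$ and of the base-case identification $\mathcal{I}\circ\mathcal{L}_1=\mathcal{P}_{\mathcal{I}\circ\mathcal{L}_1}=\mathcal{I}$ only spells out what the paper leaves implicit.
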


\begin{ex}\rm Ryan \cite{ryanthesis} proved that $${\cal P}_{\cal K} =  \mathcal{K}\circ\mathcal{P} {\rm ~~ and~~ } {\cal P}_{\cal W} =  \mathcal{W}\circ\mathcal{P}$$ and Pe{\l}czy\'nski \cite{pelczynski} proved that $${\cal L}_{\cal K} =  \mathcal{K}\circ\mathcal{L} {\rm ~~ and~~ } {\cal L}_{\cal W} =  \mathcal{W}\circ\mathcal{L}$$
So, Theorem \ref{icompcc} guarantees that the pairs $\left(\mathcal{P}_{\mathcal{K},n}, \mathcal{L}_{\mathcal{K},n} \right)_{n=1}^{N}$ of compact homogeneous polynomials and multilinear mappings and $\left(\mathcal{P}_{\mathcal{W},n}, \mathcal{L}_{\mathcal{W},n} \right)_{n=1}^{N}$ of weakly compact homogenous polynomials and multilinear mappings are strongly coherent and compatible with $\mathcal{K}$ and $\mathcal{W}$, respectively. The same is true for the pair of finite rank polynomials/multilinear mappings (see \cite[Proposition 3.1(b)]{mujicatrans}) and the approximate by finite rank polynomials/multilinear mappings (see \cite[Theorem 2.2]{leticia}).\end{ex}

\section{Dunfort-Pettis}


In this section we discuss the class of Dunford-Pettis multilinear mappings (for the linear case, we refer to \cite{A79,AB82,B80}), which will allow us to verify the independence of conditions $($CH1$)$ and $($CH3$)$. It is easy to find a multi-ideal that satisfies $($CH1$)$ but not $($CH3$)$; for example, the finite type mappings (or the mappings that can be approximate by finite type, if we want a complete class). 
The study of this class is justified because it has the property to satisfy (CH3) but not (CH1). It follows directly from this that it is not approximable by operators of finite type.

\begin{definition}\rm
Let $E_1, \dots, E_n$ and $F$ be Banach spaces. A continuous multilinear mapping $T : E_1 \times \cdots \times E_n \rightarrow F$ is called \textit{Dunford-Pettis} when $\left(T\left(x_j^{(1)},\dots, x_j^{(n)} \right)\right)_{j=1}^{\infty}$ converges to zero, for any sequences $\left(x_j^{(i)}\right)_{j=1}^{\infty}\subset E_i$, $i=1,\dots, n$, that converges weakly to zero, that is, $$x_j^{(i)} \overset{w}{\longrightarrow} 0 \Rightarrow \left\|T\left(x_j^{(1)},\dots, x_j^{(n)} \right) \right\| \longrightarrow 0.$$
In that case we write $T\in\mathcal{L}_{DP}(E_1,\ldots,E_n;F)$. \end{definition}

\begin{definition}\rm
Let $E$ and $F$ be Banach spaces. A continuous $n$-homogeneous polynomial $P : E \rightarrow F$ is called \textit{Dunford-Pettis} when
$$x_j \overset{w}{\longrightarrow} 0 \Rightarrow \left\|P\left(x_j \right) \right\| \longrightarrow 0,$$ for any sequences $(x_j)_{j=1}^{\infty}\subset E$ that converges weakly to zero. In that case we write $P\in\mathcal{P}_{DP}(^nE;F)$.\end{definition}

An important fact about this class is the following:

\begin{prop}\
	\begin{description}
		\item $(i)$ The class $(\mathcal{L}_{DP};\|\cdot\|)$ is a Banach multi-ideal.
		
		\item $(ii)$ The class $(\mathcal{P}_{DP};\|\cdot\|)$ is a Banach polynomial ideal.
	
	\end{description}
\end{prop}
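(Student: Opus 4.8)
The plan is to verify, for each component $\mathcal{L}_{DP}(E_1,\ldots,E_n;F)$ (resp.\ $\mathcal{P}_{DP}(^nE;F)$), the three defining properties of a Banach multi-ideal (resp.\ polynomial ideal): that it is a closed linear subspace of $\mathcal{L}(E_1,\ldots,E_n;F)$ containing the finite type mappings, that the identity-type map $I_n$ (resp.\ $\widehat I_n$) has norm $1$, and that the multi-ideal (resp.\ ideal) property holds with the inherited uniform norm. Since the norm is the ordinary operator norm $\|\cdot\|$, the norm estimates in the multi-ideal property are automatic once membership is established, and $\|I_n\|=1$ is the usual elementary computation; so the real content is three things: closure under linear combinations, stability under composition with linear operators on each side, and norm-closedness of each component.

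First I would observe that membership in $\mathcal{L}_{DP}$ is a condition of the form ``weakly null inputs $\Rightarrow$ norm null output.'' Linearity of the subspace is immediate: if $x_j^{(i)}\xrightarrow{w}0$ for all $i$ and $S,T\in\mathcal{L}_{DP}$, then $\|(\lambda S+\mu T)(x_j^{(1)},\ldots,x_j^{(n)})\|\le|\lambda|\,\|S(\cdots)\|+|\mu|\,\|T(\cdots)\|\to0$. Containment of finite type mappings reduces, by linearity, to the generators $\varphi_1\otimes\cdots\otimes\varphi_n\otimes y$: if $x_j^{(i)}\xrightarrow{w}0$ then each scalar sequence $\varphi_i(x_j^{(i)})\to0$, hence the product tends to $0$ and so does its product with $y$. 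For the multi-ideal property, let $A\in\mathcal{L}_{DP}(E_1,\ldots,E_n;F)$, $u_i\in\mathcal{L}(G_i;E_i)$, $t\in\mathcal{L}(F;H)$; if $x_j^{(i)}\xrightarrow{w}0$ in $G_i$ then $u_i(x_j^{(i)})\xrightarrow{w}0$ in $E_i$ (bounded linear maps are weak–weak continuous), so $A(u_1(x_j^{(1)}),\ldots,u_n(x_j^{(n)}))\to0$ in norm, and applying the bounded operator $t$ keeps it norm null; thus $t\circ A\circ(u_1,\ldots,u_n)\in\mathcal{L}_{DP}$, and the norm inequality is just submultiplicativity of the operator norm. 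The polynomial case (ii) is entirely parallel, using that $\mathcal{P}_{DP}$ contains $\varphi^n\otimes y$ because $\varphi(x_j)\to0$, and that $t\circ P\circ u$ maps weakly null sequences as required.

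The step that needs slightly more care, and which I expect to be the only genuine obstacle, is the completeness (norm-closedness) of each component $\mathcal{L}_{DP}(E_1,\ldots,E_n;F)$ inside the Banach space $\mathcal{L}(E_1,\ldots,E_n;F)$. Suppose $T_m\to T$ uniformly with each $T_m\in\mathcal{L}_{DP}$, and fix weakly null sequences $(x_j^{(i)})_j$; these are norm-bounded, say by $M$, by the uniform boundedness principle. Given $\varepsilon>0$ choose $m$ with $\|T-T_m\|<\varepsilon/(2M^n)$, then choose $J$ so that $\|T_m(x_j^{(1)},\ldots,x_j^{(n)})\|<\varepsilon/2$ for $j\ge J$; for such $j$,
\[
\|T(x_j^{(1)},\ldots,x_j^{(n)})\|\le\|(T-T_m)(x_j^{(1)},\ldots,x_j^{(n)})\|+\|T_m(x_j^{(1)},\ldots,x_j^{(n)})\|<\varepsilon,
\]
so $T\in\mathcal{L}_{DP}$. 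The same $\varepsilon/2$ argument, with $M^n$ replaced by $M^n$ coming from $\|P(x_j)\|\le\|P\|\,\|x_j\|^n$, gives closedness of $\mathcal{P}_{DP}(^nE;F)$. Hence both classes are Banach, completing the proof; I would present it by doing the multilinear case in full and remarking that (ii) follows verbatim with $n$ equal arguments.
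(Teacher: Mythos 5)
Your proposal is correct and follows essentially the same route as the paper's proof: finite type mappings are handled via the generators, the ideal property follows from the weak--weak continuity of bounded linear operators, and closedness is obtained by the same $\varepsilon$-argument exploiting the boundedness of weakly null sequences. The only difference is that you also spell out the trivial verifications (linearity of the components, the norm of $I_n$) that the paper omits.
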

\begin{proof} We will do the first assertion, the second is analogous. We begin by showing that the finite type mappings are contained in $\mathcal{L}_{DP}$.
Let $T \in \mathcal{L}_{f}(E_1,\dots, E_n; F)$, that is, there is $\varphi_k^{i} \in E_i'$ and $y_k \in F$, such that
$$T(x_1,\dots, x_n) = \sum_{k=1}^m\varphi_k^{1}(x_1)\cdots, \varphi_k^{n}(x_n)y_k.$$
Let $\left(x_j^{(i)}\right)_{j=1}^{\infty} \subset E_i$ weakly convergent to zero. Then
$$T\left(x_j^{(1)}, \dots, x_j^{(n)} \right) = \sum_{k=1}^m \varphi_k^{1}\left(x_j^{(1)}\right)\cdots, \varphi_k^{n}\left(x_j^{(n)}\right)y_k$$
clearly converges to zero.

Now we can check the ideal propriety. Let $G_1,\dots, G_n, E_1,\dots, E_n, F, H $ be Banach spaces, $u_i \in \mathcal{L}(G_i; E_i), i=1,\dots, n, T \in \mathcal{L}_{DP}(E_1,\dots, E_n; F)$ and $t \in \mathcal{L}(F; H)$. Also let $\left(x_j^{(i)} \right)_{j=1}^{\infty} \subset G_i$ be weakly convergent to zero. 
Because $u_i$ is continuous we have that $\left(u_i\left(x_j^{(i)} \right) \right)_{j=1}^{\infty}$ is a weakly convergent sequence to zero in $E_i$, $i=1,\ldots,n$. Since
\begin{align*}
\left\|t \circ T\circ (u_1,\dots, u_n)\left(x_j^{(1)},\dots, x_j^{(n)} \right) \right\| &
&\le \|t\| \left\|T\left(u_1\left(x_j^{(1)} \right),\dots, u_n\left(x_j^{(n)} \right) \right) \right\|
\end{align*}
then, $t \circ T\circ (u_1,\dots, u_n) \in \mathcal{L}_{DP}(G_1,\dots, G_n; H)$.

Now we need to prove that $\mathcal{L}_{DP}(E_1,\dots, E_n; F)$ is closed in $\mathcal{L}(E_1,\dots, E_n; F)$. In order to do that, let
$\left(T_k \right)_{k=1}^{\infty} \subset \mathcal{L}_{DP}(E_1,\dots, E_n; F)$ a convergent sequence, say to $T\in\mathcal{L}(E_1,\dots, E_n; F)$. We have to show that $T$ is Dunford-Pettis. Given $\epsilon > 0$, there is $k_0 \in \mathbb{N}$, such that
\begin{equation*}
k \ge k_0 \Rightarrow \|T_k - T\| < \epsilon.
\end{equation*}
Let $\left(x_j^{(i)} \right)_{j=1}^{\infty} \subset E_i$, $i=1,\dots, n$, weakly convergent to zero sequences. Since all weakly convergent sequences are bounded, there is $ M > 0$, such that $\left\|x_j^{(1)} \right\|\cdots \left\|x_j^{(n)} \right\|\le M$, for every $j \in \mathbb{N}$. Then,
\begin{align*}
\left\|T\left(x_j^{(1)},\dots, x_j^{(n)} \right) \right\| &\le \left\|(T_{k_0} - T)\left(x_j^{(1)},\dots, x_j^{(n)} \right) \right\| + \left\|T_{k_0}\left(x_j^{(1)},\dots, x_j^{(n)} \right) \right\|\\
&\le \|T_{k_0} - T\| \left\|x_j^{(1)} \right\|\cdots \left\|x_j^{(n)} \right\| + \left\|T_{k_0}\left(x_j^{(1)},\dots, x_j^{(n)} \right) \right\|\\
&< \epsilon M + \left\|T_{k_0}\left(x_j^{(1)},\dots, x_j^{(n)} \right) \right\|,
\end{align*}
Thus, $T \in \mathcal{L}_{DP}(E_1,\dots, E_n; F)$.\end{proof}

\begin{obs}
The class $\left(\mathcal{L}_{DP}, \|\cdot \| \right)$ is not a hyper-ideal. Indeed, just consider $T : \ell_2 \times \ell_2 \rightarrow \ell_1$ given by
\begin{equation*}
T\left((x_j)_{j=1}^{\infty}, (y_j)_{j=1}^{\infty} \right) = (x_j y_j)_{j=1}^{\infty}.
\end{equation*}
It is not difficult to see that $T \notin \mathcal{L}_{DP}(\ell_2, \ell_2; \ell_1)$. By Theorem of Schur \cite[Theorem $1.7$]{diestel}. $I_d \in \mathcal{CC}(\ell_1) = \mathcal{L}_{DP}(\ell_1)$, where $\mathcal{CC}$ is the class of the operators completely continuous. So, if $\mathcal{L}_{DP}$ were a hyper-ideal, we would have
\begin{equation*}
T = Id_{\ell_1} \circ T \in \mathcal{L}_{DP}(\ell_2, \ell_2; \ell_1).
\end{equation*}
that is a contradiction.
\end{obs}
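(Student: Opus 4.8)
The claim to prove is that $(\mathcal{L}_{DP},\|\cdot\|)$ is not a hyper-ideal, and the proposed witness is the bilinear multiplication map $T\colon \ell_2\times\ell_2\to\ell_1$, $T((x_j),(y_j))=(x_jy_j)_j$.

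\textbf{Plan.} The strategy is a proof by contradiction exactly along the lines sketched in the statement. First I would verify that $T$ is indeed a well-defined bounded bilinear operator into $\ell_1$: this is just the Cauchy--Schwarz inequality, $\sum_j|x_jy_j|\le \|(x_j)\|_2\,\|(y_j)\|_2$, so $\|T\|\le 1$. Second, and this is the crux, I would show $T\notin\mathcal{L}_{DP}(\ell_2,\ell_2;\ell_1)$. The natural test sequences are the unit vectors $e_n\in\ell_2$, which converge weakly to $0$ in $\ell_2$ (standard: $\langle e_n,y\rangle\to 0$ for every $y\in\ell_2$ since $y\in\ell_2$). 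But $T(e_n,e_n)=e_n\in\ell_1$ has $\|e_n\|_1=1\not\to 0$. Hence $T$ fails the Dunford--Pettis condition, so $T\notin\mathcal{L}_{DP}$. Third, I would recall that by Schur's theorem $\ell_1$ has the Schur property, i.e. weakly convergent sequences in $\ell_1$ converge in norm; equivalently $\mathrm{Id}_{\ell_1}$ is completely continuous. A one-linear completely continuous operator is precisely a Dunford--Pettis operator (with $n=1$), so $\mathrm{Id}_{\ell_1}\in\mathcal{L}_{DP}(\ell_1;\ell_1)$. Finally, if $\mathcal{L}_{DP}$ were a hyper-ideal, composition of the linear (hence $1$-linear) member $\mathrm{Id}_{\ell_1}\in\mathcal{L}_{DP}$ with... — more precisely, the hyper-ideal property lets us compose a member $P\in\mathcal{Q}(^nE;F)$ on the outside with an arbitrary multilinear map on the inside: taking $n=1$, $P=\mathrm{Id}_{\ell_1}\in\mathcal{L}_{DP}(\ell_1;\ell_1)$ and $Q=T\in\mathcal{L}(^2\ell_2;\ell_1)$ would force $\mathrm{Id}_{\ell_1}\circ T=T\in\mathcal{L}_{DP}(\ell_2,\ell_2;\ell_1)$, contradicting the second step.

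\textbf{Where the work is.} The only genuine content is the computation that $T\notin\mathcal{L}_{DP}$: one must exhibit weakly null sequences whose image is not norm null, and $e_n\searrow 0$ weakly in $\ell_2$ with $T(e_n,e_n)=e_n$ does this cleanly. Everything else is bookkeeping: boundedness of $T$ is Cauchy--Schwarz, the Schur property of $\ell_1$ is cited from \cite[Theorem~1.7]{diestel}, and the identification $\mathcal{CC}(\ell_1)=\mathcal{L}_{DP}(\ell_1)$ in the linear case is immediate from the definitions. I would also remark explicitly why the hyper-ideal property is what is being contradicted and not the mere multi-ideal property: composing $\mathrm{Id}_{\ell_1}$ (a \emph{linear} map) on the outside is allowed even for multi-ideals, so the point is that a multi-ideal is \emph{not} stable under composing an $n$-linear map with a member of lower multilinearity degree on the inside — which is exactly the extra flexibility a hyper-ideal would demand. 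No serious obstacle is expected; the argument is short and self-contained once the weak nullity of $(e_n)$ in $\ell_2$ and the Schur property of $\ell_1$ are invoked.
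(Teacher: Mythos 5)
Your proposal is correct and follows essentially the same route as the paper: the same witness $T$, the Schur-property identification $\mathrm{Id}_{\ell_1}\in\mathcal{CC}(\ell_1)=\mathcal{L}_{DP}(\ell_1)$, and the contradiction $T=\mathrm{Id}_{\ell_1}\circ T$ via the hyper-ideal property. You additionally spell out the step the paper leaves as ``not difficult to see,'' namely that $e_n\rightharpoonup 0$ in $\ell_2$ while $T(e_n,e_n)=e_n$ has $\|e_n\|_1=1$, which is a worthwhile inclusion.
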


\begin{prop} The class $\mathcal{L}_{DP}$ satisfies $($CH3$)$ but not $($CH1$)$.\end{prop}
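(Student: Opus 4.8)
The plan is to establish the two assertions separately, since they are of a rather different nature. For condition (CH3): given $T \in \mathcal{L}_{DP}(E_1,\ldots,E_k;F)$ and $Q \in \mathcal{L}(E_{k+1},\ldots,E_{k+n})$, I want to show that the $(k+n)$-linear map $QT$, defined by $QT(x_1,\ldots,x_{k+n}) = Q(x_{k+1},\ldots,x_{k+n})\cdot T(x_1,\ldots,x_k)$, is Dunford--Pettis. First I take sequences $\bigl(x_j^{(i)}\bigr)_{j=1}^\infty \subset E_i$ with $x_j^{(i)} \overset{w}{\longrightarrow} 0$ for $i = 1,\ldots,k+n$. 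Since weakly convergent sequences are bounded, $Q$ being continuous and multilinear gives $\bigl|Q\bigl(x_j^{(k+1)},\ldots,x_j^{(k+n)}\bigr)\bigr| \le \|Q\|\,\|x_j^{(k+1)}\|\cdots\|x_j^{(k+n)}\| \le \|Q\| M$ for some $M > 0$ and all $j$; simultaneously, since $T$ is Dunford--Pettis, $\|T(x_j^{(1)},\ldots,x_j^{(k)})\| \to 0$. Multiplying, $\|QT(x_j^{(1)},\ldots,x_j^{(k+n)})\| \le \|Q\| M \,\|T(x_j^{(1)},\ldots,x_j^{(k)})\| \to 0$, so $QT \in \mathcal{L}_{DP}$. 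For the norm estimate one checks $\|QT\| = \|Q\|\,\|T\|$ directly from the definition of the uniform norm (indeed $\|QT\|_{\mathcal L_{DP}} = \|QT\| \le \|Q\|\,\|T\| = \|Q\|\,\|T\|_{\mathcal L_{DP}}$), so $\beta_3 = 1$ works; note only the weak convergence of the last $n$ coordinates (not even that, just boundedness) is used for the $Q$ factor.

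For the failure of (CH1), the idea is to exhibit a single Dunford--Pettis bilinear map one of whose ``fixings'' $T_a$ is not Dunford--Pettis. The natural candidate is built from the multiplication $\ell_2 \times \ell_2 \to \ell_1$, $\bigl((x_j),(y_j)\bigr) \mapsto (x_j y_j)_j$ already used in the Remark above: this map is \emph{not} Dunford--Pettis, but composing it with the (completely continuous, by Schur) identity of $\ell_1$ changes nothing, so that does not directly help. Instead I would take $B \in \mathcal{L}(\ell_2,\ell_2;\ell_1)$ to be the multiplication map and consider a bilinear map whose restriction recovers $B$; e.g. define $T \in \mathcal{L}({}^3\ell_2;\ell_1)$ or $T \in \mathcal{L}(\ell_2,\ell_2,\ell_2;\ell_1)$ in such a way that $T$ itself \emph{is} Dunford--Pettis (which forces using a third weakly-null coordinate that kills the bad behaviour) but $T_a$ for a suitable fixed $a$ collapses to (a nonzero multiple of) the multiplication map, which is not Dunford--Pettis. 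A clean way: let $\varphi \in \ell_2'$ be a fixed functional with $\varphi(a) \ne 0$ for our chosen $a \in \ell_2$, and set
$$T\bigl((x_j)_j,(y_j)_j,(z_j)_j\bigr) = \varphi\bigl((z_j)_j\bigr)\cdot (x_j y_j)_j .$$
Then $T_a(x,y) = \varphi(a)\,(x_jy_j)_j$ is a nonzero scalar multiple of the non-Dunford--Pettis multiplication map, hence $T_a \notin \mathcal{L}_{DP}$, so (CH1) fails \emph{provided} $T \in \mathcal{L}_{DP}$.

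The crux of the argument — and the main obstacle — is therefore verifying that this $T$ \emph{is} Dunford--Pettis, i.e. that for weakly null $(x_j^{(1)}),(x_j^{(2)}),(x_j^{(3)})$ in $\ell_2$ one has $\bigl\|\varphi(x_j^{(3)})\,(x_j^{(1),m}x_j^{(2),m})_m\bigr\|_{\ell_1} \to 0$. Here $\varphi(x_j^{(3)}) \to 0$ since weak convergence is tested against $\varphi$, while $\|(x_j^{(1),m}x_j^{(2),m})_m\|_{\ell_1} = \langle x_j^{(1)}, \overline{x_j^{(2)}}\rangle$-type expression is bounded by $\|x_j^{(1)}\|_2\,\|x_j^{(2)}\|_2 \le C$ by Cauchy--Schwarz; the product of a null sequence and a bounded sequence tends to $0$. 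So the verification is actually routine once the map is set up with the third coordinate carrying a \emph{linear} functional rather than a genuinely bilinear obstruction — the subtlety is precisely in choosing $T$ so that it lands in $\mathcal{L}_{DP}$ yet has a bad fixing, and one must also double-check that the finite-type/ideal hypotheses do not force $\mathcal{L}_{DP}$ to be closed under fixings abstractly (it is not, as this example shows). One should also confirm $T$ is genuinely $3$-linear and continuous, which is immediate. Thus $\mathcal{L}_{DP}$ satisfies (CH3) (with $\beta_3 = 1$) but not (CH1), as claimed.
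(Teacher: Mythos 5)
Your proposal is correct, and the (CH3) half is essentially identical to the paper's argument: the $Q$-factor is bounded because weakly null sequences are bounded, the $T$-factor tends to zero in norm, and the product does too. For the failure of (CH1) you use the same underlying trick as the paper --- tensor a linear functional $\varphi$ onto a non-Dunford--Pettis map, so that the functional slot forces the whole map into $\mathcal{L}_{DP}$ (scalar factor $\to 0$, the rest bounded) while fixing that slot at a point $a$ with $\varphi(a)\neq 0$ recovers the bad map --- but with a different witness: your example is the $3$-linear map $T(x,y,z)=\varphi(z)\,(x_my_m)_m$ into $\ell_1$ built on the non-DP multiplication $\ell_2\times\ell_2\to\ell_1$, whereas the paper takes the simpler bilinear $T(x,y)=\varphi(x)\,y$ on $E\times\ell_2\to\ell_2$, whose fixing $T_a=\varphi(a)\,\mathrm{id}_{\ell_2}$ fails to be Dunford--Pettis because $\ell_2$ lacks the Schur property. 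Both counterexamples are valid; the paper's is one degree lower and its non-DP seed is just the identity, but yours has the mild bonus of reusing the multiplication map already exhibited in the preceding Remark.
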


\begin{proof} Let $T\in\mathcal{L}_{DP}(E_1,\ldots,E_n;F)$ and $Q\in\mathcal{L}(E_{n+1},\ldots,E_{n+m})$. Let also $\left(x_j^{(i)} \right)_{j=1}^{\infty} \subset E_i$, $i=1, \dots, n+m$ be weakly convergent sequences to zero. Thus the sequence $(Q(x_j^{n+1},\ldots,x_j^{n+m}))_{j=1}^\infty$ is bounded and the sequence $(T(x_j^{1},\ldots,x_j^n))_{j=1}^\infty$ converges, in norm, to zero. Then $$(QT(x_j^{1},\ldots,x_j^{n+m}))_{j=1}^\infty$$ converges to zero. Therefore $QT \in \mathcal{L}_{DP}(E_1,\dots, E_{n+m}; F)$.

Let's show that $\mathcal{L}_{DP}$ does not satisfy $($CH1$)$. Consider $E$ a Banach space and take $a \in E\setminus \{0\}$. Then, by the Hahn-Banach Theorem there is $\varphi \in E'$, such that $\vert\varphi(a)\vert = 1$. Define
$T : E \times \ell_2 \rightarrow \ell_2$ by $$T\left(x, (x_j)_{j=1}^{\infty} \right) := (\varphi(x)x_j)_{j=1}^{\infty}.$$ It is easy to see that $T$ is continuous bilinear application. Now if
$x_j \overset{w}{\longrightarrow} 0$ in $E$ and $y_j \overset{w}{\longrightarrow} 0$ in $\ell_2$ then  $\varphi(x_j) \overset{\vert\cdot\vert}{\longrightarrow} 0$ and $(y_j)_{j=1}^\infty$ is bounded in $\ell_2$. Thus $$T(x_j,y_j)\overset{\|\cdot\|_2}{\longrightarrow} 0,$$ that is, $T \in \mathcal{L}_{DP}(E, \ell_2; \ell_2)$.

Now note that $T_a \notin \mathcal{L}_{DP}(\ell_2; \ell_2)$. Indeed, consider $e_j \in \ell_2$, $j \in \mathbb{N}$. It is well known that $e_j$ converges weakly to zero in $\ell_2$. But $$\|T(a, e_j)\|_2=|\varphi(a)|,$$ for all $j \in \mathbb{N}$. Thus $(T_a(e_j))_{j=1}^\infty$ does not converges to zero in $\ell_2$, that is, $\mathcal{L}_{DP}$ does not satisfy (CH1).


\end{proof}


\section{Inequality method}

The inequality method was introduced in \cite{ewerton2} as a class of multilinear mappings that form a hyper-ideal. We recall those definitions and results.

\begin{definition}\rm
	Let $0 <p\leq 1$. By $\mathcal{BAN}$ we denote the class of all Banach spaces over $\mathbb{K}=\mathbb{R}$ or $\mathbb{C}$ and by $p-\mathcal{BAN}$ the class of all $p$-Banach spaces over $\mathbb{K}$. A correspondence $$\mathcal{X}:\mathcal{BAN}\longrightarrow p-\mathcal{BAN}$$ that associates to each Banach space $E$ an $p$-Banach space $\left(\mathcal{X}(E), \Vert\cdot\Vert_{\mathcal{X}(E)}\right)$ is called an $p$-sequence functor if:
	\begin{description}
		\item {(i)} $\mathcal{X}(E)$ is a linear subspace of $\mathbb{E}^{\mathbb{N}}$ with the usual algebraic operations;
		\item {(ii)} For all $x\in E$ and $j\in\mathbb{N}$, we have $(0,\dots,0,x,0,\dots)\in\mathcal{X}(E)$, where $x$ is placed at the $j$th coordinate, and $\Vert(0,\dots,0,x,0,\dots)\Vert_{\mathcal{X}(E)}=\Vert x\Vert_{E}$.
		\item {(iii)} For every $u\in\mathcal{L}(E;F)$ and every finite $E$-valued sequence $(x_j)_{j=1}^{k}:=(x_1,\dots,x_k,0,0,\dots)$, $k\in\mathbb{N}$, it holds $$\left\Vert\left(u(x_j)\right)_{j=1}^{k}\right\Vert_{\mathcal{X}(F)}\leq\Vert u\Vert\left\Vert\left(x_j\right)_{j=1}^{k}\right\Vert_{\mathcal{X}(E)}.$$
	\end{description}
When $p=1$ we simply say that $\mathcal{X}$ is a sequence functor.
\end{definition}

\begin{definition}\rm
Let $0 < p, q \le 1$, $p$-sequence functor $\mathcal{X}$ and $q$-sequence functor $\mathcal{Y}$. We say that an $A \in \mathcal{L}(E_1,\dots, E_n; F)$ is \textit{$(\mathcal{X}-\mathcal{Y})$-summing} if there is a constant $C > 0$ such that
\begin{equation}\label{CI2}
\left\|\left(A(x_j^{(1)},\dots, x_j^{(n)}) \right)_{j=1}^{k} \right\|_{\mathcal{Y}(F)} \le C \sup_{T \in B_{\mathcal{L}(E_1,\dots, E_n)}}\left\|\left(T(x_j^{(1)},\dots, x_j^{(n)}) \right)_{j=1}^k \right\|_{\mathcal{X}(\mathbb{K})},
\end{equation}
for all $k \in \mathbb{N}$ and any finite sequences $\left(x_j^{(i)} \right)_{j=1}^k \subset E_i$, $i=1,\dots, n$. In this case we write $A \in (\mathcal{X}-\mathcal{Y})(E_1,\dots, E_n; F)$.

The infimum of constants $C > 0$ that satisfies the condition \eqref{CI2} is a $q$-norm in $(\mathcal{X}-\mathcal{Y})(E_1,\dots, E_n; F)$, that will noted by $\left\|\cdot \right\|_{(\mathcal{X}-\mathcal{Y})}$.\end{definition}

\begin{definition}\rm Let $0 < p, q \le 1$. We say that an $p$-sequence functor $\mathcal{X}$ is \textit{scalary dominated} by the $q$-sequence functor $\mathcal{Y}$ when, for all finite sequence $\left(\lambda_j \right)_{j=1}^k \subset \mathbb{K}$, $k \in \mathbb{N}$ we have
$$\left\|\left(\lambda_j \right)_{j=1}^k \right\|_{\mathcal{X}(\mathbb{K})} \le \left\|\left(\lambda_j \right)_{j=1}^k \right\|_{\mathcal{Y}(\mathbb{K})}.$$
\end{definition}


\begin{teo}\cite[Theorem 2.7]{ewerton2}
Let $0 < p, q \le 1$, $p$-sequence functor $\mathcal{X}$ and $q$-sequence functor $\mathcal{Y}$, such that $\mathcal{Y}$ is scalary dominated by $\mathcal{X}$. Then, $$\left((\mathcal{X}-\mathcal{Y}), \left\|\cdot \right\|_{(\mathcal{X}-\mathcal{Y})} \right)$$ is an $q$-Banach hyper-ideal.
\end{teo}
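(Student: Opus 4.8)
The plan is to verify the three defining properties of a $q$-Banach hyper-ideal for the class $\left((\mathcal{X}-\mathcal{Y}),\|\cdot\|_{(\mathcal{X}-\mathcal{Y})}\right)$ in turn: that each component is a $q$-normed subspace of $\mathcal{L}(E_1,\dots,E_n;F)$ containing the finite type mappings, that $\|I_n\|_{(\mathcal{X}-\mathcal{Y})}=1$, that the hyper-ideal property holds, and finally completeness. The hypothesis that $\mathcal{Y}$ is scalarly dominated by $\mathcal{X}$ is what keeps the right-hand supremum in \eqref{CI2} finite and large enough to control the left-hand side under composition, so it will be invoked repeatedly.

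First I would check that the finite type (indeed finite rank) mappings lie in $(\mathcal{X}-\mathcal{Y})$: for $A=\varphi_1\otimes\cdots\otimes\varphi_n\otimes y$, one estimates $\|(A(x_j^{(1)},\dots,x_j^{(n)}))_{j=1}^k\|_{\mathcal{Y}(F)}$ by pulling the scalar sequence $(\varphi_1(x_j^{(1)})\cdots\varphi_n(x_j^{(n)}))_j$ out via property (ii) of a sequence functor and comparing with the supremum over $T\in B_{\mathcal{L}(E_1,\dots,E_n)}$, which includes (a scalar multiple of) $\varphi_1\otimes\cdots\otimes\varphi_n$; scalar domination then converts the $\mathcal{Y}(\mathbb{K})$-norm into the $\mathcal{X}(\mathbb{K})$-norm appearing on the right. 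The $q$-norm claim for $\|\cdot\|_{(\mathcal{X}-\mathcal{Y})}$ is already asserted in the definition above, so I would just record that an infimum of admissible constants for a sum is controlled by the admissible constants of the summands through the $q$-subadditivity of $\|\cdot\|_{\mathcal{Y}(F)}$; the normalization $\|I_n\|=1$ follows by taking $T=I_n$ itself in the supremum and using scalar domination in the reverse reading to get both inequalities.

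The main step is the hyper-ideal property. Given $A\in(\mathcal{X}-\mathcal{Y})(E_1,\dots,E_n;F)$, multilinear maps $B_i\in\mathcal{L}(G_{i,1},\dots,G_{i,m_i};E_i)$, and $t\in\mathcal{L}(F;H)$, I want to bound
$$\left\|\left(t\circ A\circ(B_1,\dots,B_n)\,(z_j)\right)_{j=1}^k\right\|_{\mathcal{Y}(H)}$$
where $z_j$ ranges over the appropriate product of the $G_{i,\ell}$. Using property (iii) of the sequence functor $\mathcal{Y}$ with the operator $t$ pulls out $\|t\|$; then the $(\mathcal{X}-\mathcal{Y})$-summability of $A$ applied to the sequences $(B_i(\cdot))_j\subset E_i$ bounds the rest by $C\|A\|_{(\mathcal{X}-\mathcal{Y})}$ times a supremum over $S\in B_{\mathcal{L}(E_1,\dots,E_n)}$ of $\|(S(B_1(\cdot),\dots,B_n(\cdot))_j)\|_{\mathcal{X}(\mathbb{K})}$. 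The crux is to recognize that $S\circ(B_1,\dots,B_n)$ is itself a scalar multilinear map on the product of the $G_{i,\ell}$ of norm at most $\|S\|\prod_i\|B_i\|\le\prod_i\|B_i\|$, so after rescaling it lies in the unit ball over which the defining supremum for the composite is taken; this yields the estimate $\|t\circ A\circ(B_1,\dots,B_n)\|_{(\mathcal{X}-\mathcal{Y})}\le\|t\|\cdot\|A\|_{(\mathcal{X}-\mathcal{Y})}\cdot\prod_i\|B_i\|$, which is exactly the hyper-ideal inequality (with constants $C_m=1$). This bookkeeping with the supremum over the unit ball, and making sure the rescaling constants are tracked correctly, is where I expect the only real friction.

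Finally, completeness: given a $\|\cdot\|_{(\mathcal{X}-\mathcal{Y})}$-Cauchy sequence $(A_r)$, it is also Cauchy in $\mathcal{L}(E_1,\dots,E_n;F)$ because $\|\cdot\|\le\|\cdot\|_{(\mathcal{X}-\mathcal{Y})}$ (take $k=1$ and a single norming functional, using scalar domination), so $A_r\to A$ uniformly; then for each fixed $k$ and fixed finite sequences the inequality \eqref{CI2} passes to the limit, with the limiting constant bounded by $\limsup_r\|A_r\|_{(\mathcal{X}-\mathcal{Y})}$, giving $A\in(\mathcal{X}-\mathcal{Y})$ and $\|A_r-A\|_{(\mathcal{X}-\mathcal{Y})}\to 0$ by a standard $\varepsilon$-argument using $q$-subadditivity. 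One should be slightly careful that the left-hand side $\|\cdot\|_{\mathcal{Y}(F)}$ is continuous in the entries, which follows from property (iii) applied to scalar multiples of the identity, so passing to the limit inside the $\mathcal{Y}(F)$-norm is legitimate.
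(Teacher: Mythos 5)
This theorem is quoted from \cite[Theorem 2.7]{ewerton2} and the present paper gives no proof of its own, so there is nothing internal to compare against; your outline is the standard argument and is correct, and its key step (rescaling $S\circ(B_1,\dots,B_n)$ into the unit ball of the scalar-valued multilinear forms on the domain spaces) is exactly the computation the paper itself performs in the special case of Proposition \ref{P1.8.}. One small repair: the containment of the finite-type operators uses property (iii) of the sequence functor $\mathcal{Y}$ applied to the rank-one operator $\lambda\mapsto\lambda y$ (not property (ii)), followed by the scalar domination hypothesis to pass from $\mathcal{Y}(\mathbb{K})$ to $\mathcal{X}(\mathbb{K})$.
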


The proof of the next result is easily verified, and will be omitted. 

\begin{prop}\label{P1.7.}
Let $T \in (\mathcal{X}-\mathcal{Y})_{n+1}(E_1,\dots, E_{n+1}; F)$ and $a_i \in E_i$, $i=1,\ldots,n$. Then
$$T_{a_i} \in (\mathcal{X}-\mathcal{Y})_{n+1}(E_1,\dots, E_{i-1}, E_{i+1},\dots E_{n+1}; F)$$
and
$$\|T_{a_i}\|_{(\mathcal{X}-\mathcal{Y})_n} \le \|T\|_{(\mathcal{X}-\mathcal{Y})_{n+1}}\|a_i\|.$$\end{prop}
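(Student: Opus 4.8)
The statement to prove is that the partial application $T_{a_i}$ of an $(\mathcal{X}-\mathcal{Y})$-summing $(n+1)$-linear operator (fixing the $i$th slot at $a_i$) is again $(\mathcal{X}-\mathcal{Y})$-summing with $\|T_{a_i}\|_{(\mathcal{X}-\mathcal{Y})_n} \le \|T\|_{(\mathcal{X}-\mathcal{Y})_{n+1}}\|a_i\|$. The natural route is to go directly to the defining inequality \eqref{CI2} for $T_{a_i}$, take arbitrary finite sequences $(x_j^{(\ell)})_{j=1}^k$ in the remaining $n$ slots $E_1,\dots,E_{i-1},E_{i+1},\dots,E_{n+1}$, and estimate $\left\|\left(T_{a_i}(x_j^{(1)},\dots,x_j^{(n)})\right)_{j=1}^k\right\|_{\mathcal{Y}(F)}$ by inserting the constant sequence $(a_i,a_i,\dots,a_i)$ in the $i$th slot. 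Since $T_{a_i}(x_j^{(1)},\dots,x_j^{(n)}) = T(x_j^{(1)},\dots,x_j^{(i-1)},a_i,x_j^{(i)},\dots,x_j^{(n)})$, the summing inequality for $T$ at level $n+1$ gives
\[
\left\|\left(T_{a_i}(x_j^{(1)},\dots,x_j^{(n)})\right)_{j=1}^k\right\|_{\mathcal{Y}(F)} \le \|T\|_{(\mathcal{X}-\mathcal{Y})_{n+1}} \sup_{S \in B_{\mathcal{L}(E_1,\dots,E_{n+1})}} \left\|\left(S(x_j^{(1)},\dots,x_j^{(i-1)},a_i,x_j^{(i)},\dots,x_j^{(n)})\right)_{j=1}^k\right\|_{\mathcal{X}(\mathbb{K})}.
\]

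\textbf{Key step.} The crux is to bound that supremum over $(n+1)$-linear forms $S$ by $\|a_i\|$ times the supremum over $n$-linear forms $R$ of $\left\|\left(R(x_j^{(1)},\dots,x_j^{(n)})\right)_{j=1}^k\right\|_{\mathcal{X}(\mathbb{K})}$. Given $S \in B_{\mathcal{L}(E_1,\dots,E_{n+1})}$, define $R := S_{a_i} \in \mathcal{L}(E_1,\dots,E_{i-1},E_{i+1},\dots,E_{n+1})$, i.e. $R(y_1,\dots,y_n) = S(y_1,\dots,y_{i-1},a_i,y_i,\dots,y_n)$; then $\|R\| \le \|S\|\,\|a_i\| \le \|a_i\|$, so $R/\|a_i\| \in B_{\mathcal{L}(\dots)}$ (the case $a_i = 0$ being trivial), and $S(x_j^{(1)},\dots,x_j^{(i-1)},a_i,x_j^{(i)},\dots,x_j^{(n)}) = R(x_j^{(1)},\dots,x_j^{(n)}) = \|a_i\| \cdot \tfrac{R}{\|a_i\|}(x_j^{(1)},\dots,x_j^{(n)})$. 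Using homogeneity of the $\mathcal{X}(\mathbb{K})$-norm (which follows from it being a $p$-norm on a linear space) one factors out $\|a_i\|$ and then takes the supremum over the unit ball of $\mathcal{L}(E_1,\dots,E_{i-1},E_{i+1},\dots,E_{n+1})$, obtaining exactly the right-hand side of \eqref{CI2} for $T_{a_i}$ scaled by $\|a_i\|$. Combining the two displays gives both membership $T_{a_i} \in (\mathcal{X}-\mathcal{Y})_n$ and the norm estimate.

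\textbf{Main obstacle.} There is no serious obstacle — this is why the paper says the proof is "easily verified" and omits it. The only point requiring a moment's care is the passage from the supremum over $B_{\mathcal{L}(E_1,\dots,E_{n+1})}$ to the supremum over $B_{\mathcal{L}(E_1,\dots,E_{i-1},E_{i+1},\dots,E_{n+1})}$: one must check that every $n$-linear form $R$ on the reduced product with $\|R\| \le 1$ arises (up to scaling) from some $(n+1)$-linear $S$ with $\|S\| \le 1$ via $S_{a_i}$, which is not literally needed — we only need the one-sided inequality coming from $R = S_{a_i}$, so the supremum over the reduced ball dominates, and that is all that is used. A cosmetic remark: the typo in the statement, where the conclusion reads $(\mathcal{X}-\mathcal{Y})_{n+1}$ instead of $(\mathcal{X}-\mathcal{Y})_n$ for the ambient class of $T_{a_i}$, should be corrected to $(\mathcal{X}-\mathcal{Y})_n$ so that it matches the norm subscript and the statement of (CH1).
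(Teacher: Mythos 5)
Your argument is correct and is precisely the direct verification the paper has in mind when it omits the proof: plug the constant sequence $(a_i,\dots,a_i)$ into the $i$th slot of the defining inequality for $T$ at level $n+1$, and pass from the supremum over $B_{\mathcal{L}(E_1,\dots,E_{n+1})}$ to the supremum over the reduced ball via $S\mapsto S_{a_i}/\|a_i\|$, using $\|S_{a_i}\|\le\|S\|\,\|a_i\|$ and the homogeneity of the $p$-norm on $\mathcal{X}(\mathbb{K})$ (with $a_i=0$ trivial). You are also right that the displayed conclusion should read $(\mathcal{X}-\mathcal{Y})_{n}$ rather than $(\mathcal{X}-\mathcal{Y})_{n+1}$, and the index should run over $i=1,\dots,n+1$.
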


%

\begin{prop}\label{P1.8.}Let $T \in (\mathcal{X}-\mathcal{Y})_{n}(E_1,\dots, E_{n}; F)$ and $Q \in \mathcal{L}(E_{n+1},\dots, E_{n+m})$. Then
$$QT \in (\mathcal{X}-\mathcal{Y})_{n+m}(E_1,\dots, E_{n+m}; F)$$ and $$\|QT\|_{(\mathcal{X}-\mathcal{Y})_{n+m}} \le \|Q\| \ \|T\|_{(\mathcal{X}-\mathcal{Y})_n}.$$\end{prop}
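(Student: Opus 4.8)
(condition (CH3) for the inequality method).}

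The plan is to unwind the definition of the $(\mathcal{X}-\mathcal{Y})$-norm for the operator $QT$ and reduce the required supremum over $B_{\mathcal{L}(E_1,\dots,E_{n+m})}$ to the supremum over $B_{\mathcal{L}(E_1,\dots,E_n)}$ that controls $\|T\|_{(\mathcal{X}-\mathcal{Y})_n}$. First I would fix finite sequences $(x_j^{(i)})_{j=1}^k \subset E_i$ for $i = 1,\dots,n+m$, and observe that the sequence $(QT(x_j^{(1)},\dots,x_j^{(n+m)}))_{j=1}^k$ equals $(\lambda_j\, T(x_j^{(1)},\dots,x_j^{(n)}))_{j=1}^k$, where $\lambda_j := Q(x_j^{(n+1)},\dots,x_j^{(n+m)}) \in \mathbb{K}$ and $|\lambda_j| \le \|Q\|\,\|x_j^{(n+1)}\|\cdots\|x_j^{(n+m)}\|$.

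The key step is to pass from the operator $T$ (which is $(\mathcal{X}-\mathcal{Y})$-summing) to a bound on the left-hand side of \eqref{CI2} for $QT$. Applying the defining inequality of $\|T\|_{(\mathcal{X}-\mathcal{Y})_n}$ to the sequences $(x_j^{(1)}),\dots,(x_j^{(n)})$ gives
$$\left\|\left(T(x_j^{(1)},\dots,x_j^{(n)})\right)_{j=1}^k\right\|_{\mathcal{Y}(F)} \le \|T\|_{(\mathcal{X}-\mathcal{Y})_n} \sup_{S \in B_{\mathcal{L}(E_1,\dots,E_n)}} \left\|\left(S(x_j^{(1)},\dots,x_j^{(n)})\right)_{j=1}^k\right\|_{\mathcal{X}(\mathbb{K})}.$$
Then, for any scalar multilinear form $R \in B_{\mathcal{L}(E_1,\dots,E_{n+m})}$, the scalars $R(x_j^{(1)},\dots,x_j^{(n+m)})$ can be written, after normalizing the last $m$ arguments, in terms of a form on the first $n$ variables of norm at most $\|x_j^{(n+1)}\|\cdots\|x_j^{(n+m)}\|$ applied to $(x_j^{(1)},\dots,x_j^{(n)})$, which lets one identify the two supremum expressions up to the factor $\|Q\|$ and the products of norms of the extra coordinates; this is the step that requires a little care, since the way the scalars $\lambda_j$ mix with the supremum over $B_{\mathcal{L}(E_1,\dots,E_n)}$ must be handled by recognizing that $(\lambda_j S(x_j^{(1)},\dots,x_j^{(n)}))_j$ is itself of the form $(\widetilde{S}(x_j^{(1)},\dots,x_j^{(n+m)}))_j$ for a suitable $\widetilde{S} \in \mathcal{L}(E_1,\dots,E_{n+m})$ with a controlled norm. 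Combining, one gets
$$\left\|\left(QT(x_j^{(1)},\dots,x_j^{(n+m)})\right)_{j=1}^k\right\|_{\mathcal{Y}(F)} \le \|Q\|\,\|T\|_{(\mathcal{X}-\mathcal{Y})_n} \sup_{R \in B_{\mathcal{L}(E_1,\dots,E_{n+m})}} \left\|\left(R(x_j^{(1)},\dots,x_j^{(n+m)})\right)_{j=1}^k\right\|_{\mathcal{X}(\mathbb{K})},$$
which shows $QT \in (\mathcal{X}-\mathcal{Y})_{n+m}$ with $\|QT\|_{(\mathcal{X}-\mathcal{Y})_{n+m}} \le \|Q\|\,\|T\|_{(\mathcal{X}-\mathcal{Y})_n}$.

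The main obstacle I anticipate is precisely the matching of the two suprema over unit balls of spaces of scalar-valued multilinear forms of different arities: one must argue that pre-composing a form on $E_1\times\cdots\times E_n$ with the "multiplication by $Q$" operation yields a form on $E_1\times\cdots\times E_{n+m}$ whose norm is at most $\|Q\|$ times the original, and conversely, so that no loss beyond $\|Q\|$ occurs. Once that bookkeeping is done, the rest is a direct substitution using the monotonicity and homogeneity of the $\mathcal{X}(\mathbb{K})$- and $\mathcal{Y}(F)$-norms together with property (iii) of a sequence functor, which is why the authors deem the proof routine enough to omit.
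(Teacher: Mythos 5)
Your overall strategy is the right one, and your right-hand-side bookkeeping (recognizing $(\lambda_j S(x_j^{(1)},\dots,x_j^{(n)}))_j$ as $(\widetilde S(x_j^{(1)},\dots,x_j^{(n+m)}))_j$ for a form $\widetilde S$ of norm at most $\|Q\|\,\|S\|$) is exactly the comparison of suprema that the paper carries out. But there is a genuine gap on the left-hand side, at the point where you apply the defining inequality of $\|T\|_{(\mathcal{X}-\mathcal{Y})_n}$ to the \emph{unmodified} sequences $(x_j^{(1)})_j,\dots,(x_j^{(n)})_j$: that yields a bound on $\|(T(x_j^{(1)},\dots,x_j^{(n)}))_{j=1}^k\|_{\mathcal{Y}(F)}$, whereas the quantity you must bound is $\|(\lambda_j T(x_j^{(1)},\dots,x_j^{(n)}))_{j=1}^k\|_{\mathcal{Y}(F)}$ with $\lambda_j = Q(x_j^{(n+1)},\dots,x_j^{(n+m)})$. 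Passing from the former to the latter would require an inequality of the type $\|(\lambda_j y_j)_j\|_{\mathcal{Y}(F)} \le \sup_j|\lambda_j|\cdot\|(y_j)_j\|_{\mathcal{Y}(F)}$, i.e.\ a solidity or monotonicity property of $\mathcal{Y}$. That is not among the axioms of a sequence functor: condition (iii) only permits applying one fixed linear operator to every coordinate, not multiplying the $j$-th coordinate by a $j$-dependent scalar. So, as written, your chain of inequalities does not close.

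The fix is small and is precisely what the paper's proof does: absorb the scalar into one of the first $n$ arguments \emph{before} invoking the summing inequality. Since $QT(x_j^{(1)},\dots,x_j^{(n+m)}) = T\bigl(Q(x_j^{(n+1)},\dots,x_j^{(n+m)})x_j^{(1)},x_j^{(2)},\dots,x_j^{(n)}\bigr)$, apply the defining inequality of $T$ to the sequences $(\lambda_j x_j^{(1)})_j,(x_j^{(2)})_j,\dots,(x_j^{(n)})_j$. Then the left-hand side is exactly $\|(QT(x_j^{(1)},\dots,x_j^{(n+m)}))_{j=1}^k\|_{\mathcal{Y}(F)}$, and on the right-hand side each term $\varphi(\lambda_j x_j^{(1)},x_j^{(2)},\dots,x_j^{(n)})$ equals $\psi(x_j^{(1)},\dots,x_j^{(n+m)})$ for the $(n+m)$-linear form $\psi = Q\cdot\varphi$ with $\|\psi\|\le\|Q\|\,\|\varphi\|$; this is exactly your $\widetilde S$ argument, and comparing the two suprema then produces the factor $\|Q\|$ and the stated norm estimate.
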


\begin{proof}
Let $T \in (\mathcal{X}-\mathcal{Y})_{n}(E_1,\dots, E_{n}; F)$ and $Q \in \mathcal{L}(E_{n+1},\dots, E_m)$. Then
\begin{align*}
&\left\|\left(QT(x_j^{(1)},\dots, x_j^{(n+m)}) \right)_{j=1}^{k} \right\|_{\mathcal{Y}(F)}\\
&= \left\|\left(T\left(Q\left(x_j^{(n+1)},\dots, x_j^{(n+m)}\right)x_j^{(1)},\dots, x_j^{(n)}\right) \right)_{j=1}^{k} \right\|_{\mathcal{Y}(F)}\\
&\le \|T\|_{(\mathcal{X}-\mathcal{Y})_{n}} \sup_{\varphi \in B_{\mathcal{L}(E_1,\dots, E_n)}}\left\|\left(\varphi \left(Q\left(x_j^{(n+1)},\dots, x_j^{(n+m)}\right)x_j^{(1)},\dots, x_j^{(n)}\right) \right)_{j=1}^k \right\|_{\mathcal{X}(\mathbb{K})}\\
&\le  \|T\|_{(\mathcal{X}-\mathcal{Y})_{n}}\|Q\|\sup_{\psi \in B_{\mathcal{L}(E_1,\dots, E_{n+m})}}\left\|\left(\psi\left(x_j^{(1)},\dots, x_j^{(n+m)}\right) \right)_{j=1}^k \right\|_{\mathcal{X}(\mathbb{K})}.
\end{align*}
Thus, $$QT \in (\mathcal{X}-\mathcal{Y})_{n}(E_1,\dots, E_{n+m}; F)\
\mbox{and} \ \|QT\|_{(\mathcal{X}-\mathcal{Y})_{n+m}} \le \|Q\| \ \|T\|_{(\mathcal{X}-\mathcal{Y})_n}.$$
\end{proof}

We could define the polynomial case in the intuitive way. However, this treatment is not suitable in our context. For instance, in \cite{dimant} it was shown that the polynomial ideal $\left(\mathcal{P}\prod_p^{n, str}, \left\|\cdot \right\|_{\mathcal{P}\prod_p^{n, str}} \right)_{n=1}^N$, generated by this method, is not compatible with the ideal $\prod_p$, according to \cite{joilson}. Thus, we follow the natural approach and work with the class $\mathcal{P}_{(\mathcal{X}-\mathcal{Y})}$. 
This is a hyper-ideal, by Proposition \ref{corhip}, because it is not to difficult to see that the class $(\mathcal{X}-\mathcal{Y})$ is strongly symmetric. Moreover, by Proposition \ref{PP1.9.}, we have that is true the conditions $($CH2$)$ and $($CH4$)$. Therefore, by the Remark \ref{rmrk} item $(3)$ we conclude that:





\begin{teo}
The sequence $\left(\left(\mathcal{P}_{(\mathcal{X}-\mathcal{Y})_n}, \|\cdot\|_{\mathcal{P}_{(\mathcal{X}-\mathcal{Y})_n}} \right), \left((\mathcal{X}-\mathcal{Y})_n, \|\cdot \|_{(\mathcal{X}-\mathcal{Y})_n} \right) \right)_{n=1}^N$ is strongly coherent and compatible with $(\mathcal{X}-\mathcal{Y})_1$.\end{teo}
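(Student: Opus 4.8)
The plan is to assemble the five conditions (CH1)--(CH5) together with (CP1)--(CP5) by quoting the structural results already proved for the inequality method and for the class $\mathcal{P}_{(\mathcal{X}-\mathcal{Y})}$. First I would record that $(\mathcal{X}-\mathcal{Y})_1$ is an operator ideal (the linear component of a hyper-ideal), so that $\mathcal{U}_1 = \mathcal{M}_1 = (\mathcal{X}-\mathcal{Y})_1 = \mathcal{I}$ as required by Definitions \ref{HCoh} and \ref{HComp}. Condition (CH1) is exactly Proposition \ref{P1.7.}, with $\beta_1 = 1$; condition (CH3) is exactly Proposition \ref{P1.8.}, with $\beta_3 = 1$. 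Since $(\mathcal{X}-\mathcal{Y})$ is strongly symmetric (hence in particular symmetric), and since $(\mathcal{X}-\mathcal{Y})_n$ is a sequence of symmetric multi-ideals satisfying (CH1) and (CH3), Proposition \ref{PP1.9.} applies verbatim and yields (CH2) and (CH4) for the associated polynomial class $\mathcal{P}_{(\mathcal{X}-\mathcal{Y})_n}$, again with $\beta_2 = \beta_1 = 1$. Condition (CH5) holds by the very definition of $\mathcal{P}_{(\mathcal{X}-\mathcal{Y})_n} = \{P \in \mathcal{P} : \check{P} \in (\mathcal{X}-\mathcal{Y})_n\}$: $P \in \mathcal{U}_n$ if and only if $\check{P} \in \mathcal{M}_n$. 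This establishes strong coherence with all three constants equal to $1$.

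For strong compatibility I would invoke item $(3)$ of Remark \ref{rmrk}: since the sequence is strongly coherent with $\beta_1 = \beta_2 = \beta_3 = 1$, it is automatically strongly compatible with $\mathcal{I} = (\mathcal{X}-\mathcal{Y})_1$. Concretely, (CP1) follows from iterating (CH1) down to degree one, (CP2) from iterating (CH2), (CP3) from (CH3) specialized to $k=1$, (CP4) from (CH4) with $k=1$, and (CP5) from (CH5) at the single degree $n$; the norm constants $\alpha_1, \alpha_2, \alpha_3$ are all $1$ because each $\beta_i$ is. Alternatively, one can avoid Remark \ref{rmrk}$(3)$ and verify (CP1)--(CP3) directly: (CP1) is Proposition \ref{P1.7.} applied $n-1$ times (the intermediate maps stay in the class by the same proposition), (CP2) is the $n-1$-fold iterate of (CH2) noting $\|\check{P}\|_{\mathcal{M}_n} = \|P\|_{\mathcal{P}_{(\mathcal{X}-\mathcal{Y})_n}}$ so the max is harmless, and (CP3) is the special case of Proposition \ref{P1.8.} with $T = u$ a linear (i.e.\ $1$-linear) operator and $Q \in \mathcal{L}(E_1,\ldots,E_{n-1})$.

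There is essentially no serious obstacle here, since every ingredient has already been proved in the preceding sections; the statement is a bookkeeping assembly. The only point requiring a line of justification is that $(\mathcal{X}-\mathcal{Y})$ is \emph{strongly symmetric}, which is needed both to apply Proposition \ref{PP1.9.} (which only requires symmetry) and, more importantly, to legitimately pass to the polynomial hyper-ideal $\mathcal{P}_{(\mathcal{X}-\mathcal{Y})_n}$ via Proposition \ref{corhip} — this is what makes $\mathcal{P}_{(\mathcal{X}-\mathcal{Y})_n}$ a well-behaved class rather than a mere formal device. Strong symmetry is immediate: given $A \in (\mathcal{X}-\mathcal{Y})(^nE;F)$ and $\sigma \in S_n$, plugging the permuted sequences $(x_j^{(\sigma(1))}, \ldots, x_j^{(\sigma(n))})$ into the defining inequality \eqref{CI2} shows $A_\sigma$ satisfies the same inequality with the same constant, because the supremum defining the right-hand side is taken over all $T \in B_{\mathcal{L}(E_1,\ldots,E_n)}$ and is therefore invariant under the corresponding relabeling; hence $A_\sigma \in (\mathcal{X}-\mathcal{Y})$ with $\|A_\sigma\|_{(\mathcal{X}-\mathcal{Y})} = \|A\|_{(\mathcal{X}-\mathcal{Y})}$. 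With that observation in place, Propositions \ref{PP1.9.} and \ref{corhip} and Remark \ref{rmrk}$(3)$ close the argument.
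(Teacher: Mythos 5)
Your proposal is correct and follows essentially the same route as the paper: (CH1) and (CH3) from Propositions \ref{P1.7.} and \ref{P1.8.} with constants $1$, strong symmetry of $(\mathcal{X}-\mathcal{Y})$ feeding into Propositions \ref{PP1.9.} and \ref{corhip} to get (CH2), (CH4) and the polynomial hyper-ideal structure, (CH5) by definition of $\mathcal{P}_{(\mathcal{X}-\mathcal{Y})_n}$, and compatibility via Remark \ref{rmrk}(3). Your explicit permutation argument for strong symmetry fills in a detail the paper dismisses as ``not too difficult to see,'' but it is the same argument the paper intends.
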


\section{$\mathcal{I}$-bounded method}

\begin{definition}\rm
Let $\mathcal{I}$ be an operator ideal. We say that a subset $K$ of a Banach space $F$ is \textit{$\mathcal{I}$-bounded} if there is a Banach space $H$ and a linear operator $u \in \mathcal{I}(H; F)$, such that, $K \subset u(B_H)$. The class of $\mathcal{I}$-bounded operators is denoted by $C_{\langle \mathcal{I} \rangle}(F)$.
\end{definition}

\begin{definition}\rm
Let $\mathcal{I}$ be an operator ideal. We say that a mapping $A \in \mathcal{L}(E_1,\dots, E_n; F)$ is \textit{$\mathcal{I}$-bounded} if
$$A(B_{E_1}\times \cdots \times B_{E_m}) \in C_{\langle \mathcal{I}\rangle}(F).$$
In other words, there is a Banach space $H$ and an operator $u \in \mathcal{I}(H; F)$, such that,
\begin{equation}\label{CI}
A(B_{E_1}\times \cdots \times B_{E_m}) \subset u(B_{H}).
\end{equation}
We denoted by $\mathcal{L}_{\langle\mathcal{I}\rangle}$ the class of $\mathcal{I}$-bounded mappings.
If $\|\cdot \|_{\mathcal{I}}$ is an $p$-norm in $\mathcal{I}$, then we can define a norm in $\mathcal{L}_{\langle\mathcal{I}\rangle}$, by
$$\left\|A \right\|_{\mathcal{L}_{\langle\mathcal{I}\rangle}} = \inf\left\{\|u\|_{\mathcal{I}}; u \ \text{satisfies \eqref{CI}} \right\}.$$
\end{definition}

In \cite{ewerton2} was proven the following.

\begin{teo}
Let $0 \le p \le 1$ and $\left(\mathcal{I}, \|\cdot \|_{\mathcal{I}} \right)$  an $p$-Banach ($p$-normed) ideal. Then, $\left(\mathcal{L}_{\langle\mathcal{I}\rangle}, \|\cdot \|_{\mathcal{L}_{\langle\mathcal{I}\rangle}} \right)$ is an $p$-Banach ($p$-normed) hyper-ideal.\end{teo}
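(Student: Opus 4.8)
The plan is to verify the three defining properties of a $p$-normed ($p$-Banach) hyper-ideal directly from the definition of $\mathcal{L}_{\langle\mathcal{I}\rangle}$, following the template already used for the composition ideal. First I would check the ideal-type structure for each fixed $n$: that $\mathcal{L}_{\langle\mathcal{I}\rangle}(E_1,\ldots,E_n;F)$ is a linear subspace of $\mathcal{L}(E_1,\ldots,E_n;F)$ containing the finite type mappings, that $\|\cdot\|_{\mathcal{L}_{\langle\mathcal{I}\rangle}}$ restricted to it is a $p$-norm, and that $\|I_n\|_{\mathcal{L}_{\langle\mathcal{I}\rangle}}=1$. For the finite-type mappings, note that a finite-rank operator has range in a finite-dimensional space, so its image of the product of unit balls is a bounded subset of a finite-dimensional space, hence contained in $u(B_H)$ for a suitable finite-rank (thus $\mathcal{I}$-) operator $u$; this gives both membership and the norm estimate. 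For $I_n\colon\mathbb{K}^n\to\mathbb{K}$, the image of $B_{\mathbb{K}}^n$ is $B_{\mathbb{K}}$, so $I_n = \mathrm{id}_{\mathbb{K}}\circ I_n$ with $\|\mathrm{id}_{\mathbb{K}}\|_{\mathcal{I}}=1$, giving $\|I_n\|_{\mathcal{L}_{\langle\mathcal{I}\rangle}}\le 1$, and the reverse inequality follows since any $u$ with $B_{\mathbb{K}}\subset u(B_H)$ must have $\|u\|\ge 1$, hence $\|u\|_{\mathcal{I}}\ge 1$.

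The linearity of the component spaces is where a small amount of genuine work occurs: if $A = u_1\circ B_1$-type data is not available here (the operators are not composed), I instead argue that if $A(B_{E_1}\times\cdots\times B_{E_n})\subset u(B_H)$ and $A'(B_{E_1}\times\cdots\times B_{E_n})\subset u'(B_{H'})$, then $(A+A')(B_{E_1}\times\cdots\times B_{E_n})$ is contained in the image of $B_{H\oplus_p H'}$ under the operator $(h,h')\mapsto u(h)+u'(h')$, which lies in $\mathcal{I}$ by the ideal property applied to the two coordinate inclusions and the sum; the factor arising from the choice of norm on $H\oplus H'$ yields exactly the $p$-subadditivity $\|A+A'\|_{\mathcal{L}_{\langle\mathcal{I}\rangle}}^p\le\|A\|_{\mathcal{L}_{\langle\mathcal{I}\rangle}}^p+\|A'\|_{\mathcal{L}_{\langle\mathcal{I}\rangle}}^p$. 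Positive homogeneity is immediate by rescaling $u$.

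Next I would establish the hyper-ideal property: given $A\in\mathcal{L}_{\langle\mathcal{I}\rangle}(E_1,\ldots,E_n;F)$, multilinear maps $B_i\in\mathcal{L}(G_1^{(i)},\ldots,G_{m_i}^{(i)};E_i)$ and $t\in\mathcal{L}(F;H)$, one must show $t\circ A\circ(B_1,\ldots,B_n)$ is again $\mathcal{I}$-bounded with the appropriate norm control. Pick $u\in\mathcal{I}(H';F)$ with $A(B_{E_1}\times\cdots\times B_{E_n})\subset u(B_{H'})$. On the product of the unit balls of the $G$-spaces, each $B_i$ maps into $\|B_i\|\cdot B_{E_i}$; by multilinearity $A$ maps the resulting box into $\|B_1\|\cdots\|B_n\|\cdot A(B_{E_1}\times\cdots\times B_{E_n})\subset (\|B_1\|\cdots\|B_n\|)\,u(B_{H'})$, and then $t$ pushes this into $t\circ u$ applied to a scaled ball; since $t\circ u\in\mathcal{I}$ with $\|t\circ u\|_{\mathcal{I}}\le\|t\|\,\|u\|_{\mathcal{I}}$, rescaling the operator by the constant $\|B_1\|\cdots\|B_n\|$ absorbs that factor and taking the infimum over $u$ gives $\|t\circ A\circ(B_1,\ldots,B_n)\|_{\mathcal{L}_{\langle\mathcal{I}\rangle}}\le\|t\|\,\|A\|_{\mathcal{L}_{\langle\mathcal{I}\rangle}}\,\|B_1\|\cdots\|B_n\|$.

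Finally, for the $p$-Banach case I would prove completeness of each component $\mathcal{L}_{\langle\mathcal{I}\rangle}(E_1,\ldots,E_n;F)$: a $\|\cdot\|_{\mathcal{L}_{\langle\mathcal{I}\rangle}}$-Cauchy (hence $\|\cdot\|$-Cauchy, since $\|\cdot\|\le\|\cdot\|_{\mathcal{L}_{\langle\mathcal{I}\rangle}}$) sequence has a uniform limit $A\in\mathcal{L}(E_1,\ldots,E_n;F)$, and one builds the factoring operator for $A$ as a limit of the factoring operators $u_k$ using completeness of $\mathcal{I}$, a standard telescoping/gliding-hump argument on the $\ell_p$-sum of the $H_k$'s. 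The main obstacle is precisely this completeness step together with keeping track of the $p$-norm constants when amalgamating the spaces $H_k$; the hyper-ideal property and the finite-type containment are routine unpackings of the definitions.
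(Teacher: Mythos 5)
First, a point of comparison: the paper does not prove this theorem at all --- it states it with the attribution ``In \cite{ewerton2} was proven the following,'' so there is no in-paper argument to measure yours against. Your overall plan is the standard (and correct) one for this kind of statement: finite-type maps are $\mathcal{I}$-bounded because they factor through a finite-dimensional (hence finite-rank, hence $\mathcal{I}$) operator; $\|I_n\|_{\mathcal{L}_{\langle\mathcal{I}\rangle}}=1$ because $I_n(B_{\mathbb{K}}^n)=B_{\mathbb{K}}=\mathrm{id}_{\mathbb{K}}(B_{\mathbb{K}})$; the hyper-ideal property comes from $B_i(B_{G_1^{(i)}}\times\cdots\times B_{G_{m_i}^{(i)}})\subseteq\|B_i\|B_{E_i}$ plus multilinear homogeneity and the ideal property of $\mathcal{I}$ applied to $t\circ u$; and completeness reduces, via a telescoping series, to completeness of $\mathcal{I}$. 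All of that is sound.

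The one step that fails as written is the amalgamation of the factorization spaces via the $\ell_p$-sum. For $0<p<1$ the space $H\oplus_p H'$ is only a $p$-Banach space, so it is not an admissible space $H$ in the definition of $\mathcal{I}$-boundedness; and even setting that aside, its unit ball does \emph{not} contain $B_H\times B_{H'}$ (one only has $B_H\times B_{H'}\subseteq 2^{1/p}B_{H\oplus_p H'}$), so covering $u(B_H)+u'(B_{H'})$ forces a rescaling by $2^{1/p}$ and you end up with $\|A+A'\|_{\mathcal{L}_{\langle\mathcal{I}\rangle}}^p\le 2\bigl(\|A\|_{\mathcal{L}_{\langle\mathcal{I}\rangle}}^p+\|A'\|_{\mathcal{L}_{\langle\mathcal{I}\rangle}}^p\bigr)$, a quasi-norm estimate rather than the $p$-triangle inequality you claim. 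The correct choice is the $\ell_\infty$-sum $H\oplus_\infty H'$: its closed unit ball is exactly $B_H\times B_{H'}$, the coordinate projections have norm one, so $v:=u\circ\pi_1+u'\circ\pi_2$ satisfies $v(B_{H\oplus_\infty H'})=u(B_H)+u'(B_{H'})$ and $\|v\|_{\mathcal{I}}^p\le\|u\|_{\mathcal{I}}^p+\|u'\|_{\mathcal{I}}^p$, which is precisely the $p$-subadditivity after taking infima. The same substitution ($\ell_\infty$- or $c_0$-sum of the $H_k$, with $\sum_k u_k\circ\pi_k$ converging in the $p$-Banach ideal $\mathcal{I}$) is needed in your completeness argument. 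With that correction the proof goes through.
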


Now we are going to show that $\mathcal{L}_{\langle\mathcal{I}\rangle}$ satisfies the conditions of coherence and compatibility introduced at the beginning of the paper.

\begin{prop}
Let $T \in \mathcal{L}_{\langle\mathcal{I}\rangle}^{(n+1)}(E_1,\dots, E_{n+1}; F)$ and $a_j \in E_j$, $j=1,\dots, n+1$. Then
$$T_{a_j} \in \mathcal{L}_{\langle\mathcal{I}\rangle}^{(n)}(E_1,\dots, E_{j-1}, E_{j+1},\dots,E_{n+1};F)$$ and
$$\|T_{a_j}\|_{\mathcal{L}_{\langle\mathcal{I}\rangle}^{(n)}} \le \|T\|_{\mathcal{L}_{\langle\mathcal{I}\rangle}^{(n+1)}}\|a_j\|.$$
\end{prop}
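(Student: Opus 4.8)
The statement to prove is condition (CH1) for the $\mathcal{I}$-bounded multi-ideal: if $T \in \mathcal{L}_{\langle\mathcal{I}\rangle}^{(n+1)}(E_1,\dots,E_{n+1};F)$ and $a_j \in E_j$, then the partial map $T_{a_j}$ lies in $\mathcal{L}_{\langle\mathcal{I}\rangle}^{(n)}$ with the stated norm bound. The natural approach is to start from a representation witnessing that $T$ is $\mathcal{I}$-bounded and push it through the partial-evaluation operation. So first I would fix an arbitrary $\varepsilon > 0$ and pick a Banach space $H$ and $u \in \mathcal{I}(H;F)$ with $\|u\|_{\mathcal{I}} \le \|T\|_{\mathcal{L}_{\langle\mathcal{I}\rangle}^{(n+1)}} + \varepsilon$ such that
$$T(B_{E_1}\times\cdots\times B_{E_{n+1}}) \subset u(B_H).$$

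The key step is to control the image of the unit ball under $T_{a_j}$. Without loss of generality take $j = n+1$ and write $b = a_{n+1}$; the map is $T_b(x_1,\dots,x_n) = T(x_1,\dots,x_n,b)$. If $b = 0$ the claim is trivial (the zero map is $\mathcal{I}$-bounded with norm $0$), so assume $b \neq 0$. By multilinearity, for $(x_1,\dots,x_n) \in B_{E_1}\times\cdots\times B_{E_n}$ we have $T_b(x_1,\dots,x_n) = \|b\| \cdot T(x_1,\dots,x_n, b/\|b\|)$, and since $b/\|b\| \in B_{E_{n+1}}$ this gives
$$T_b(B_{E_1}\times\cdots\times B_{E_n}) \subset \|b\|\cdot T(B_{E_1}\times\cdots\times B_{E_{n+1}}) \subset \|b\|\cdot u(B_H) = (\|b\|\,u)(B_H).$$
Now $\|b\|\,u$ is again a linear operator in $\mathcal{I}(H;F)$ (since $\mathcal{I}$ is a linear subspace) with $\|\,\|b\|\,u\,\|_{\mathcal{I}} = \|b\|\,\|u\|_{\mathcal{I}}$, which exhibits $T_b$ as $\mathcal{I}$-bounded and yields $\|T_b\|_{\mathcal{L}_{\langle\mathcal{I}\rangle}^{(n)}} \le \|b\|\,\|u\|_{\mathcal{I}} \le \|b\|\,(\|T\|_{\mathcal{L}_{\langle\mathcal{I}\rangle}^{(n+1)}} + \varepsilon)$. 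Letting $\varepsilon \to 0$ gives the desired inequality with $\beta_1 = 1$. The same argument works verbatim for any index $j$, just evaluating at the $j$th slot instead of the last.

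I do not anticipate any serious obstacle here — the only subtlety is the bookkeeping that $B_{E_1}\times\cdots\times B_{E_n}$ (a product of $n$ balls) maps under $T_b$ into the same target set that $T$ maps the $(n+1)$-fold product into, once we absorb the scalar $\|b\|$; this is immediate from $n$-linearity combined with the extra slot being filled by a fixed vector of norm $\le \|b\|$. One should also note the degenerate case $b=0$ separately so as not to divide by $\|b\|$. Everything else is a direct unwinding of the definition of $\mathcal{L}_{\langle\mathcal{I}\rangle}$ and of its norm as an infimum, so the proof is short.
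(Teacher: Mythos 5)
Your proof is correct and follows essentially the same route as the paper's: fix a representation $T(B_{E_1}\times\cdots\times B_{E_{n+1}})\subset u(B_H)$, normalize the fixed vector to $a_j/\|a_j\|$, and observe that $\|a_j\|u\in\mathcal{I}(H;F)$ witnesses the $\mathcal{I}$-boundedness of $T_{a_j}$, then pass to the infimum (your $\varepsilon$-argument is equivalent). The only difference is cosmetic: you explicitly treat the degenerate case $a_j=0$, which the paper leaves implicit.
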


\begin{proof}
As in other proofs on the same subject, we only do the case $j=1$. Let $T \in \mathcal{L}_{\langle\mathcal{I}\rangle}^{(n+1)}(E_1,\dots, E_{n+1}; F)$ and $a_1 \in E_1$, there is a Banach space $H$ and a linear operator $u \in \mathcal{I}(H; F)$, such that, for any $x_j \in E_j, j=2,\dots, n+1$,

$$T_{a_1}(x_2,\dots, x_{n+1}) = T(a_1, x_2,\dots, x_{n+1})= T\left(\frac{a_1}{\|a_1\|}, x_2,\dots, x_{n+1} \right)\|a_1\|.$$

We considered the operator $\|a_1\|u \in \mathcal{I}(H; F)$. In this way,
$$T\left(\frac{a_1}{\|a_1\|}, x_2,\dots, x_{n+1} \right) \in u(B_H) \Rightarrow T\left(\frac{a_1}{\|a_1\|}, x_2,\dots, x_{n+1} \right)\|a_1\| \in \left(\|a_1\|u \right)(B_H).$$
Then,
$$T_{a_1}(x_2,\dots, x_{n+1}) = T\left(\frac{a_1}{\|a_1\|}, x_2,\dots, x_{n+1} \right)\|a_1\| \in \left(\|a_1\|u \right)(B_H).$$
Thus we conclude, $T_{a_1} \in \mathcal{L}_{\langle\mathcal{I}\rangle}^{(n)}(E_2,\dots, E_{n+1}; F)$. Moreover $$\|T_{a_1}\|_{\mathcal{L}_{\mathcal{I}}^{(n)}}\le\an(\|a_1\|u)\fn_\mathcal{I}= \|a_1\|\|u\|_\mathcal{I}.$$ Taking the infimum above all operators $u$ that satisfies \eqref{CI}, we have $$\|T_{a_1}\|_{\mathcal{L}_{\langle\mathcal{I}\rangle}^{(n)}}\le \|T\|_{\mathcal{L}_{\langle\mathcal{I}\rangle}^{(n+1)}}\|a_1\|.$$

\end{proof}

\begin{prop}\label{prop75}
Let $T \in \mathcal{L}_{\langle\mathcal{I}\rangle}^{(n)}(E_1,\dots, E_{n}; F)$ and $Q\in \mathcal{L}(E_{n+1},\dots, E_{n+m})$. Then
$$Q T \in \mathcal{L}_{\langle\mathcal{I}\rangle}^{(n+m)}(E_1,\dots, E_{n+m}; F)$$ and $$\|Q T\|_{\mathcal{L}_{\langle\mathcal{I}\rangle}^{(n+m)}} \le \|Q\|\|T\|_{\mathcal{L}_{\langle\mathcal{I}\rangle}^{(n)}}.$$
\end{prop}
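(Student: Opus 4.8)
The plan is to mimic the structure of the analogous proof for the $(\mathcal{X}-\mathcal{Y})$ method (Proposition \ref{P1.8.}), adapting it to the $\mathcal{I}$-bounded setting. Since $T \in \mathcal{L}_{\langle\mathcal{I}\rangle}^{(n)}(E_1,\dots, E_n; F)$, there is a Banach space $H$ and a linear operator $u \in \mathcal{I}(H; F)$ such that $T(B_{E_1} \times \cdots \times B_{E_n}) \subset u(B_H)$. I would claim that the very same operator $u$ witnesses that $QT$ is $\mathcal{I}$-bounded, modulo the factor $\|Q\|$.

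First I would unwind the definition: for $x_i \in B_{E_i}$, $i=1,\dots,n+m$, we have
$$QT(x_1,\dots,x_{n+m}) = Q(x_{n+1},\dots,x_{n+m})\, T(x_1,\dots,x_n),$$
and $|Q(x_{n+1},\dots,x_{n+m})| \le \|Q\|$ since each $x_j$ lies in the unit ball. Hence $QT(x_1,\dots,x_{n+m})$ is a scalar of modulus at most $\|Q\|$ times an element of $u(B_H) \subset T(B_{E_1}\times\cdots\times B_{E_n})$'s superset. So $QT(B_{E_1}\times\cdots\times B_{E_{n+m}}) \subset \|Q\|\cdot u(B_H) = (\|Q\|\,u)(B_H)$, and since $\|Q\|\,u \in \mathcal{I}(H;F)$ (ideals are linear subspaces), this shows $QT \in \mathcal{L}_{\langle\mathcal{I}\rangle}^{(n+m)}(E_1,\dots,E_{n+m};F)$. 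For the norm estimate, $\|QT\|_{\mathcal{L}_{\langle\mathcal{I}\rangle}^{(n+m)}} \le \|\,\|Q\|\,u\,\|_{\mathcal{I}} = \|Q\|\,\|u\|_{\mathcal{I}}$, and taking the infimum over all operators $u$ satisfying \eqref{CI} for $T$ yields $\|QT\|_{\mathcal{L}_{\langle\mathcal{I}\rangle}^{(n+m)}} \le \|Q\|\,\|T\|_{\mathcal{L}_{\langle\mathcal{I}\rangle}^{(n)}}$.

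The only mildly delicate point — and the one I would be careful about — is the degenerate case $\|Q\| = 0$, i.e. $Q \equiv 0$; then $QT \equiv 0$, which is of finite type and hence trivially in $\mathcal{L}_{\langle\mathcal{I}\rangle}$ with norm $0$, so the inequality holds. Otherwise the argument above is entirely routine, as it amounts to observing that postmultiplying by a bounded scalar-valued multilinear form only rescales the image set by $\|Q\|$, which is exactly the kind of rescaling the ideal norm absorbs. I do not anticipate any real obstacle here; the proof is a direct set-inclusion computation parallel to the preceding propositions in this section.
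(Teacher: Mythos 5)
Your proof is correct and follows essentially the same route as the paper: both take a witness $u\in\mathcal{I}(H;F)$ with $T(B_{E_1}\times\cdots\times B_{E_n})\subset u(B_H)$, observe that the extra factor $Q(x_{n+1},\dots,x_{n+m})$ has modulus at most $\|Q\|$ so that $QT(B_{E_1}\times\cdots\times B_{E_{n+m}})\subset(\|Q\|\,u)(B_H)$, and then take the infimum over all such $u$. Your explicit treatment of the degenerate case $Q\equiv 0$ is a minor point the paper glosses over (it divides by $\|Q\|$ without comment), but otherwise the two arguments coincide.
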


\begin{proof}
Let $T \in \mathcal{L}_{\langle\mathcal{I}\rangle}^{(n)}(E_1,\dots, E_{n}; F)$ and $Q\in \mathcal{L}(E_{n+1},\dots, E_{n+m})$, there is a Banach space $H$ and a linear operator $u \in \mathcal{I}(H; F)$, such that, for any $(x_1,\dots, x_n) \in B_{E_1}\times \cdots \times B_{E_n}$
$$T(x_1,\dots, x_n) \in u(B_H).$$
That is, it exist $h \in B_H$, such that, $T(x_1,\dots, x_n) = u(h)$. Now
\begin{align*}
Q T(x_1,\dots, x_{n+m}) &= Q(x_{n+1},\dots, x_{n+m}) T(x_1,\dots, x_n)\\
&= Q(x_{n+1},\dots, x_{n+m})u(h)\\
&= u\left(Q(x_{n+1},\dots, x_{n+m})h \right)\\
&= u\left(\frac{Q}{\|Q\|}(x_{n+1},\dots, x_{n+m})h \right)\|Q\|,
\end{align*}
where $(x_1,\dots, x_{n+m}) \in B_{E_1}\times \cdots \times B_{E_{n+m}}$. Considering the operator
$$\tilde{u} := \|Q\|u : H \rightarrow F,$$
we have $\tilde{u}\in\mathcal{I}(H;F)$ and
$Q T(x_1,\dots, x_{n+m})=\tilde{u}(\tilde{h})$, where $\tilde{h}=\frac{Q}{\|Q\|}(x_{n+1},\dots, x_{n+m})h \in B_H$. Thus
$$Q T \in \mathcal{L}_{\langle\mathcal{I}\rangle}^{(n+m)}(E_1,\dots, E_{n+m}; F).$$
We also have,

$$\|QT\|_{\mathcal{L}_{\langle\mathcal{I}\rangle}^{(n+m)}}\le \an\tilde{u}\fn_\mathcal{I}= \|Q\|\|u\|_\mathcal{I}.$$ Then, taking the infimum above all operators $u$ that satisfies \eqref{CI}, $$\|QT\|_{\mathcal{L}_{\langle\mathcal{I}\rangle}^{(n+m)}}\le \|T\|_{\mathcal{L}_{\langle\mathcal{I}\rangle}^{(n)}}\|Q\|.$$

\end{proof}

Now we analyze the polynomial case. Using that $A_\sigma(B_E\times\cdots\times B_E)=A(B_E\times\cdots\times B_E)$ it is not difficult to see that $\left(\mathcal{L}_\mathcal{\langle\mathcal{I}\rangle}, \|\cdot \|_{\mathcal{L}_\mathcal{\langle\mathcal{I}\rangle}} \right)$ is a strongly symmetric hyper-ideal. Thus the pair $$\left(\left(\mathcal{P}_{\mathcal{L}_\mathcal{\langle\mathcal{I}\rangle}^{(n)}}, \|\cdot\|_{\mathcal{P}_{\mathcal{L}_\mathcal{\langle\mathcal{I}\rangle}^{(n)}}} \right), \left(\mathcal{L}_\mathcal{\langle\mathcal{I}\rangle}^{(n)} , \|\cdot \|_{\mathcal{L}_\mathcal{\langle\mathcal{I}\rangle}^{(n)}} \right) \right)_{n=1}^{N}$$ is strongly coherent and compatible with $\mathcal{L}_\mathcal{\langle\mathcal{I}\rangle}^{(1)}$, according to Proposition \ref{PP1.9.} and item $(3)$ of \ref{rmrk}. Now we begin the analysis of the case where the definition is independent of the multilinear case.

\begin{definition}\rm
Let $P \in \mathcal{P}(^nE; F)$ and $\mathcal{I}$ be an operator ideal. We say that $P$ is \textit{$\mathcal{I}$-bounded} if $P(B_E) \in C_{\mathcal{I}}$, that is, there is a Banach space $H$ and a linear operator $u \in \mathcal{I}(H; F)$, such that
\begin{equation}\label{E1.14}
P(B_E) \subset u(B_H).
\end{equation}

We denoted the $\mathcal{I}$-bounded polynomial space by $\mathcal{P}_{\mathcal{\langle\mathcal{I}\rangle}}^{(n)}(^nE; F)$. We can also define, in the same fashion as the multilinear mappings, a norm in $\mathcal{P}_{\mathcal{\langle\mathcal{I}\rangle}}^{(n)}(^nE; F)$ through the norm of $\mathcal{I}$ in the following manner. We considered the mapping
$\|\cdot \|_{\mathcal{P}_{\mathcal{\langle\mathcal{I}\rangle}}^{(n)}} : \mathcal{P}_{\mathcal{\langle\mathcal{I}\rangle}}^{(n)} \rightarrow [0, +\infty)$, given by

$$\|P\|_{\mathcal{P}_{\mathcal{\langle\mathcal{I}\rangle}}^{(n)}} = \inf\{u; u \ \text{satisfies \eqref{E1.14}}\}.$$

We denoted the class of $\mathcal{I}$-bounded homogeneous polynomials by $\mathcal{P}_{\mathcal{\langle\mathcal{I}\rangle}}$.
\end{definition}

Adapting the proof of \cite[Theorem 3.3]{ewerton2} for polynomials we get:

\begin{teo}Let $(\mathcal{I},\|\cdot\|_{\cal I})$ be an $p$-Banach operator ideal. Then $(\mathcal{P}_{\langle\mathcal{I}\rangle},\|\cdot\|_{\mathcal{P}_{\langle\mathcal{I}\rangle}})$ is an $p$-Banach polynomial hyper-ideal.\end{teo}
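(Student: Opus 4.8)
The plan is to mirror the structure of the multilinear proof of \cite[Theorem 3.3]{ewerton2}, translating each step from the $n$-linear setting to the $n$-homogeneous polynomial setting. First I would check the three structural requirements from Definition \ref{dhip}: that each component $\mathcal{P}_{\langle\mathcal{I}\rangle}(^nE;F)$ is a linear subspace of $\mathcal{P}(^nE;F)$ containing the finite type polynomials, that $\|\cdot\|_{\mathcal{P}_{\langle\mathcal{I}\rangle}}$ restricts to a $p$-norm on each component, and that $\|\widehat{I_n}\|_{\mathcal{P}_{\langle\mathcal{I}\rangle}}=1$. For the containment of finite type polynomials, observe that if $P=\varphi^n\otimes y$ then $P(B_E)$ is contained in a bounded segment of the one-dimensional space $\mathbb{K}y$, which is trivially $\mathcal{I}$-bounded (finite rank operators lie in every ideal); linear combinations are handled by the fact that a sum of $\mathcal{I}$-bounded sets is $\mathcal{I}$-bounded, since if $K_1\subset u_1(B_{H_1})$ and $K_2\subset u_2(B_{H_2})$ with $u_i\in\mathcal{I}$, then $K_1+K_2\subset v(B_{H_1\oplus_\infty H_2})$ where $v(h_1,h_2)=u_1(h_1)+u_2(h_2)$ belongs to $\mathcal{I}$. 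The same construction, scaled, gives the $p$-norm triangle-type inequality $\|P_1+P_2\|_{\mathcal{P}_{\langle\mathcal{I}\rangle}}^p\le\|P_1\|_{\mathcal{P}_{\langle\mathcal{I}\rangle}}^p+\|P_2\|_{\mathcal{P}_{\langle\mathcal{I}\rangle}}^p$, while homogeneity of the norm in the scalar is clear and $\|P\|_{\mathcal{P}_{\langle\mathcal{I}\rangle}}=0$ forces $P(B_E)=\{0\}$ hence $P=0$. The normalization $\|\widehat{I_n}\|=1$ follows because $\widehat{I_n}(B_{\mathbb{K}})=B_{\mathbb{K}}$ and $\mathrm{id}_{\mathbb{K}}\in\mathcal{I}$ with $\|\mathrm{id}_{\mathbb{K}}\|_{\mathcal{I}}=1$, and no operator of smaller $\mathcal{I}$-norm can have $B_{\mathbb{K}}$ in the image of its unit ball.

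Next I would establish the hyper-ideal inequality: given $t\in\mathcal{L}(F;H)$, $P\in\mathcal{P}_{\langle\mathcal{I}\rangle}(^nE;F)$ and $Q\in\mathcal{P}(^mG;E)$, one must show $t\circ P\circ Q\in\mathcal{P}_{\langle\mathcal{I}\rangle}(^{mn}G;H)$ with the appropriate norm bound. Pick $u\in\mathcal{I}(K;F)$ with $P(B_E)\subset u(B_K)$. Since $Q(B_G)\subset \|Q\|B_E$, we get $P(Q(B_G))\subset P(\|Q\|B_E)=\|Q\|^n P(B_E)\subset \|Q\|^n u(B_K)=(\|Q\|^n u)(B_K)$, and then $t(P(Q(B_G)))\subset (\|Q\|^n\, t\circ u)(B_K)$. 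Since $\|Q\|^n\, t\circ u\in\mathcal{I}(K;H)$ by the ideal property, this shows $t\circ P\circ Q$ is $\mathcal{I}$-bounded, and taking the infimum over all representing operators $u$ yields $\|t\circ P\circ Q\|_{\mathcal{P}_{\langle\mathcal{I}\rangle}}\le\|t\|\cdot\|P\|_{\mathcal{P}_{\langle\mathcal{I}\rangle}}\cdot\|Q\|^n$, which is even better than required (the hyper-ideal constants $(C_m)$ can all be taken equal to $1$). Note this step uses only that $Q$ maps the unit ball of $G$ into a ball of radius $\|Q\|$ in $E$, which is exactly the homogeneity of $Q$ together with its norm.

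The remaining and genuinely delicate point is completeness: assuming $(\mathcal{I},\|\cdot\|_{\mathcal{I}})$ is $p$-Banach, I must show each $\mathcal{P}_{\langle\mathcal{I}\rangle}(^nE;F)$ is complete in $\|\cdot\|_{\mathcal{P}_{\langle\mathcal{I}\rangle}}$. Here I would reproduce the argument of \cite[Theorem 3.3]{ewerton2}: given a $\|\cdot\|_{\mathcal{P}_{\langle\mathcal{I}\rangle}}$-Cauchy sequence $(P_k)$, first note $\|P\|\le\|P\|_{\mathcal{P}_{\langle\mathcal{I}\rangle}}$ (since $P(B_E)\subset u(B_K)$ gives $\|P\|\le\|u\|\le\|u\|_{\mathcal{I}}$), so $(P_k)$ converges in the sup-norm to some $P\in\mathcal{P}(^nE;F)$; then pass to a fast subsequence, build representing operators $u_k\in\mathcal{I}$ for the differences $P_{k+1}-P_k$ with rapidly decaying $\mathcal{I}$-norms, assemble them via an $\ell_p$-type direct sum construction into a single operator in $\mathcal{I}$ whose unit-ball image dominates $P(B_E)$ (using $p$-completeness of $\mathcal{I}$ to sum the series), and finally verify $\|P-P_k\|_{\mathcal{P}_{\langle\mathcal{I}\rangle}}\to 0$. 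This is the main obstacle: the ``gluing'' of the representing operators must be done carefully so that the resulting operator genuinely lies in $\mathcal{I}$ and controls the $\mathcal{I}$-bound of the limit polynomial, and it is exactly the place where $p$-Banach-ness of $\mathcal{I}$ (as opposed to mere $p$-normedness) is used. Since the set-theoretic manipulations with $B_E$ are identical to the multilinear case — the only difference being that one tracks a single ball $B_E$ instead of a product $B_{E_1}\times\cdots\times B_{E_n}$ — the adaptation is routine once that construction is in hand, so I would state it as ``adapting the proof of \cite[Theorem 3.3]{ewerton2}'' and only spell out the direct-sum gluing if the referee asks.
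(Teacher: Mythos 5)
Your proposal is correct and follows exactly the route the paper takes: the paper offers no written proof here, merely stating that the result follows by adapting \cite[Theorem 3.3]{ewerton2}, which is precisely your plan. The details you supply --- the $\ell_\infty$-direct-sum argument for linearity and the $p$-triangle inequality, the inclusion $Q(B_G)\subset\|Q\|B_E$ combined with $n$-homogeneity yielding the hyper-ideal estimate with all constants $C_m=1$, and the deferral of the completeness gluing to the multilinear source --- are all sound.
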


\begin{prop}\label{CH5IL} Let $E$ and $F$ be Banach spaces. Then $P \in \mathcal{P}_{\mathcal{\langle\mathcal{I}\rangle}}^{(n)}(^nE; F)$ if, and only if, $\check{P} \in \mathcal{L}_{\mathcal{\langle\mathcal{I}\rangle}}^{(n)}(^nE; F)$.\end{prop}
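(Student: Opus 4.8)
The plan is to prove the two implications separately, exploiting the relation between a polynomial and its associated symmetric multilinear map together with the polarization formula. For the easy direction, suppose $\check P \in \mathcal{L}_{\langle\mathcal{I}\rangle}^{(n)}(^nE;F)$. Then $P = \widehat{\check P}$, so $P(B_E) = \{\check P(x,\dots,x) : x \in B_E\} \subset \check P(B_E\times\cdots\times B_E) \subset u(B_H)$ for some $H$ and $u\in\mathcal{I}(H;F)$ witnessing that $\check P$ is $\mathcal{I}$-bounded. Hence $P$ satisfies \eqref{E1.14} with the very same $u$, so $P \in \mathcal{P}_{\langle\mathcal{I}\rangle}^{(n)}(^nE;F)$; this also gives $\|P\|_{\mathcal{P}_{\langle\mathcal{I}\rangle}^{(n)}} \le \|\check P\|_{\mathcal{L}_{\langle\mathcal{I}\rangle}^{(n)}}$ for free, if a norm estimate is wanted.

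For the converse, assume $P\in\mathcal{P}_{\langle\mathcal{I}\rangle}^{(n)}(^nE;F)$, so there is $u\in\mathcal{I}(H;F)$ with $P(B_E)\subset u(B_H)$. I want to deduce that the image $\check P(B_E\times\cdots\times B_E)$ is $\mathcal{I}$-bounded. The key tool is the polarization formula
$$\check P(x_1,\dots,x_n) = \frac{1}{2^n n!}\sum_{\varepsilon_i = \pm 1}\varepsilon_1\cdots\varepsilon_n \, P(\varepsilon_1 x_1 + \cdots + \varepsilon_n x_n),$$
which expresses $\check P$ on the polydisc as a finite linear combination of values of $P$ at points of norm $\le n$ (for $x_i\in B_E$, the argument $\varepsilon_1 x_1+\cdots+\varepsilon_n x_n$ lies in $nB_E$). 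Each such value lies in $P(nB_E) = P(B_E)\cdot n^n \subset n^n\, u(B_H)$ by $n$-homogeneity. Therefore $\check P(B_E\times\cdots\times B_E)$ is contained in a finite sum of dilates of $u(B_H)$, i.e. in $\frac{2^n}{2^n n!}\cdot n^n\, u(B_H)$-type set; more cleanly, it is contained in $C\cdot u(B_H)$ for a constant $C = C(n)$. Since $C\,u\in\mathcal{I}(H;F)$ and $\mathcal{I}$ is an ideal, this shows $\check P\in\mathcal{L}_{\langle\mathcal{I}\rangle}^{(n)}(^nE;F)$.

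To make the last step rigorous one should be slightly careful: a finite sum $v_1(B_{H_1})+\cdots+v_k(B_{H_k})$ of images of ideal operators is again contained in the image of a single ideal operator — namely take the $\ell_1$-direct sum $H = H_1\oplus_1\cdots\oplus_1 H_k$ (or $\ell_p$-sum in the $p$-normed case) and the operator $(h_1,\dots,h_k)\mapsto \sum v_i(h_i)$, which belongs to $\mathcal{I}$ by the ideal and subspace properties. Applying this with all $v_i$ equal to scalar multiples of $u$ yields the required single witnessing operator. The main obstacle, and the only genuinely non-trivial point, is precisely this bookkeeping: turning "contained in a finite sum of $\mathcal{I}$-bounded sets" into "contained in a single $\mathcal{I}$-bounded set," which relies on the stability of operator ideals under finite $\ell_1$-sums. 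Everything else is the standard polarization estimate. If norm bounds are desired, tracking the constant through polarization gives $\|\check P\|_{\mathcal{L}_{\langle\mathcal{I}\rangle}^{(n)}}\le \frac{n^n}{n!}\|P\|_{\mathcal{P}_{\langle\mathcal{I}\rangle}^{(n)}}$, consistent with the $\left(\frac{n^n}{n!}\right)_{n=1}^\infty$-hyper-ideal constant appearing earlier in the paper.
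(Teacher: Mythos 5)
Your proposal is correct and follows essentially the same route as the paper: the easy direction is the inclusion $P(B_E)\subset \check P(B_E\times\cdots\times B_E)$, and the converse is the polarization formula with the rescaling $\varepsilon_1x_1+\cdots+\varepsilon_nx_n = n\cdot\frac{\varepsilon_1x_1+\cdots+\varepsilon_nx_n}{n}$, landing on the same constant $\frac{n^n}{n!}$. The only difference is that your $\ell_1$-direct-sum bookkeeping at the end is unnecessary here: since every value $P\bigl(\frac{\varepsilon_1x_1+\cdots+\varepsilon_nx_n}{n}\bigr)$ lies in the image of the \emph{same} operator $u$, one simply picks preimages $z_{\varepsilon_1,\ldots,\varepsilon_n}\in B_H$ and notes that $\frac{1}{2^n}\sum_{\varepsilon_j=\pm1}\varepsilon_1\cdots\varepsilon_n z_{\varepsilon_1,\ldots,\varepsilon_n}$ again lies in $B_H$, so the single witness $\frac{n^n}{n!}u$ does the job.
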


\begin{proof} Given $P\in\mathcal{P}_{\mathcal{\langle\mathcal{I}\rangle}}^{(n)}(^nE;F)$, then there exists a Banach space $H$ and a linear operator $u\in\mathcal{I}(H;F)$ such that $P(B_E)\subseteq u(B_H)$. Let $x_1,\ldots,x_n\in B_E$ and $\varepsilon_1, \ldots, \varepsilon_n=\pm1$. Since $\displaystyle\frac{\varepsilon_1x_1 + \cdots + \varepsilon_nx_n}{n} \in B_E$, there is $z_{\varepsilon_1, \ldots, \varepsilon_n} \in B_H$ such that $$P\left(\displaystyle\frac{\varepsilon_1x_1 + \cdots + \varepsilon_nx_n}{n}\right) = u\left(z_{\varepsilon_1, \ldots, \varepsilon_n}\right).$$
Then $w:=\dfrac{1}{2^n}\sum\limits_{\varepsilon_j=\pm1}\varepsilon_1\cdots\varepsilon_nz_{\varepsilon_1,\ldots,\varepsilon_n} \in B_H$. By the Polarization Formula, \begin{eqnarray*}\check{P}(x_1,\ldots,x_n)&=&\dfrac{1}{n!2^n}\sum\limits_{\varepsilon_j=\pm1}\varepsilon_1\cdots\varepsilon_nP(\varepsilon_1x_1+\cdots+\varepsilon_nx_n) \\&=& \dfrac{1}{n!2^n}\sum\limits_{\varepsilon_j=\pm1}\varepsilon_1\cdots\varepsilon_nn^nP\ap\dfrac{\varepsilon_1x_1+\cdots+\varepsilon_nx_n}{n}\fp\\&=&
\dfrac{n^n}{n!2^n}\sum\limits_{\varepsilon_j=\pm1}\varepsilon_1\cdots\varepsilon_nu(z_{\varepsilon_1,\ldots,\varepsilon_n}) =\dfrac{n^n}{n!}u(w) \in \frac{n^n}{n!}u(B_H).\end{eqnarray*}
Since $\frac{n^n}{n!}u \in {\cal I}(H;F)$, we conclude that $\check{P}(B_E\times\cdots\times B_E)\subseteq \frac{n^n}{n!}u(B_H),$ proving that $\check{P} \in \mathcal{L}_{\mathcal{\langle\mathcal{I}\rangle}}^{(n)}(^nE; F)$.
Conversely, the other implication follows easily from the definitions.
\end{proof} 
\begin{prop}
Let $a \in E$ and $P \in \mathcal{P}_{\mathcal{\langle I \rangle}}^{(n+1)}(^{n+1}E; F)$. Then
$$P_a \in \mathcal{P}_{\mathcal{I}}^{(n)}(^{n}E; F)$$ and
$$\|P_a\|_{\mathcal{P}_{\mathcal{\langle I \rangle}}^{(n)}} \le \|\check{P}\|_{\mathcal{L}_{\mathcal{\langle I \rangle}}^{(n+1)}}\|a\|.$$
\end{prop}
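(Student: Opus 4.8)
The plan is to reduce the statement about the polynomial $P_a$ to the multilinear situation we already control, using condition (CH1) for the hyper-ideal $\mathcal{L}_{\langle\mathcal{I}\rangle}$ (which was verified in the proposition preceding this one) together with Proposition \ref{CH5IL}, which identifies membership in $\mathcal{P}_{\langle\mathcal{I}\rangle}^{(n)}$ with membership of the symmetrization $\check{P}$ in $\mathcal{L}_{\langle\mathcal{I}\rangle}^{(n)}$. First I would record the elementary identity $(P_a)^{\vee} = \check{P}_a$, i.e. the symmetric $(n)$-linear map associated to $P_a$ is obtained from $\check{P}$ by freezing one entry at $a$; this is the same observation used in the proof of Proposition \ref{PP1.9.}.

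Next, starting from $P \in \mathcal{P}_{\langle\mathcal{I}\rangle}^{(n+1)}(^{n+1}E;F)$, I would apply Proposition \ref{CH5IL} to get $\check{P} \in \mathcal{L}_{\langle\mathcal{I}\rangle}^{(n+1)}({}^{n+1}E;F)$. Then the (CH1)-type proposition already proved for $\mathcal{L}_{\langle\mathcal{I}\rangle}$ (the one stating $T_{a_j}\in\mathcal{L}_{\langle\mathcal{I}\rangle}^{(n)}$ with $\|T_{a_j}\|\le\|T\|\|a_j\|$) applied to $T=\check{P}$ and the frozen entry $a$ yields
$$\check{P}_a = (P_a)^{\vee} \in \mathcal{L}_{\langle\mathcal{I}\rangle}^{(n)}({}^{n}E;F), \qquad \|(P_a)^{\vee}\|_{\mathcal{L}_{\langle\mathcal{I}\rangle}^{(n)}} \le \|\check{P}\|_{\mathcal{L}_{\langle\mathcal{I}\rangle}^{(n+1)}}\|a\|.$$
Applying Proposition \ref{CH5IL} once more, in the reverse direction, gives $P_a \in \mathcal{P}_{\langle\mathcal{I}\rangle}^{(n)}({}^nE;F)$, and the norm bound is exactly $\|P_a\|_{\mathcal{P}_{\langle\mathcal{I}\rangle}^{(n)}} = \|(P_a)^{\vee}\|_{\mathcal{L}_{\langle\mathcal{I}\rangle}^{(n)}} \le \|\check{P}\|_{\mathcal{L}_{\langle\mathcal{I}\rangle}^{(n+1)}}\|a\|$, since the norm on $\mathcal{P}_{\langle\mathcal{I}\rangle}^{(n)}$ is by definition the one inherited from $\mathcal{L}_{\langle\mathcal{I}\rangle}^{(n)}$ via the symmetrization.

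The only genuinely non-routine point is the identity $(P_a)^{\vee} = \check{P}_a$ and making sure the norm used on $\mathcal{P}_{\langle\mathcal{I}\rangle}^{(n)}$ in the statement is indeed $\|\check{P_a}\|_{\mathcal{L}_{\langle\mathcal{I}\rangle}^{(n)}}$ rather than some a priori different infimum over representations of $P_a$ directly; here one should invoke (or quickly re-derive) the fact that $\mathcal{P}_{\langle\mathcal{I}\rangle}$ coincides with $\mathcal{P}_{\mathcal{L}_{\langle\mathcal{I}\rangle}}$ isometrically, which follows from Proposition \ref{corhip} applied to the strongly symmetric hyper-ideal $\mathcal{L}_{\langle\mathcal{I}\rangle}$ (its strong symmetry was noted just before this definition, using $A_\sigma(B_E\times\cdots\times B_E)=A(B_E\times\cdots\times B_E)$). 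Everything else is a direct chain of the already-established results, so no serious obstacle is expected; one should, however, be careful that the displayed bound in the statement has the constant $1$, which is consistent with the constant $1$ appearing in the (CH1) proposition for $\mathcal{L}_{\langle\mathcal{I}\rangle}$, and requires the isometric (not merely equivalent) identification of the two polynomial norms.
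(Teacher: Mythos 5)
Your membership argument is exactly the paper's: apply Proposition \ref{CH5IL} to pass to $\check{P}\in\mathcal{L}_{\langle\mathcal{I}\rangle}^{(n+1)}$, use the (CH1)-type proposition for $\mathcal{L}_{\langle\mathcal{I}\rangle}$ together with the identity $(P_a)^{\vee}=\check{P}_a$, and apply Proposition \ref{CH5IL} in reverse. That part is fine.

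The norm estimate is where you go astray. The norm $\|\cdot\|_{\mathcal{P}_{\langle\mathcal{I}\rangle}^{(n)}}$ is \emph{not} defined as $\|(P_a)^{\vee}\|_{\mathcal{L}_{\langle\mathcal{I}\rangle}^{(n)}}$: it is the intrinsic infimum of $\|u\|_{\mathcal{I}}$ over operators $u$ with $P_a(B_E)\subset u(B_H)$, introduced precisely as ``the case where the definition is independent of the multilinear case.'' The equality $\|P_a\|_{\mathcal{P}_{\langle\mathcal{I}\rangle}^{(n)}}=\|(P_a)^{\vee}\|_{\mathcal{L}_{\langle\mathcal{I}\rangle}^{(n)}}$ that you assert is false in general: the proof of Proposition \ref{CH5IL} only yields $\|P\|_{\mathcal{P}_{\langle\mathcal{I}\rangle}}\le\|\check{P}\|_{\mathcal{L}_{\langle\mathcal{I}\rangle}}\le\frac{n^n}{n!}\|P\|_{\mathcal{P}_{\langle\mathcal{I}\rangle}}$, an equivalence with a nontrivial polarization constant, not an isometry. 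Moreover, Proposition \ref{corhip} cannot rescue this: it compares $\mathcal{P}^{\mathcal{H}}$ (polynomials of the form $\widehat{A}$, normed by an infimum over representations $A$) with $\mathcal{P}_{\mathcal{H}}$, and says nothing about the intrinsic $\mathcal{I}$-bounded polynomial norm. The repair is easy, though: you only need the one-sided inequality $\|P_a\|_{\mathcal{P}_{\langle\mathcal{I}\rangle}^{(n)}}\le\|(P_a)^{\vee}\|_{\mathcal{L}_{\langle\mathcal{I}\rangle}^{(n)}}$, which is the trivial direction (any $u$ with $(P_a)^{\vee}(B_E\times\cdots\times B_E)\subset u(B_H)$ also satisfies $P_a(B_E)\subset u(B_H)$ since $P_a(x)=(P_a)^{\vee}(x,\ldots,x)$), and chaining this with the (CH1) bound gives the stated estimate. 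The paper sidesteps the issue entirely by arguing directly: it takes a witness $u$ with $\check{P}(B_E\times\cdots\times B_E)\subset u(B_H)$, observes $P_a(B_E)\subset(\|a\|u)(B_H)$, and takes the infimum over such $u$, which lands on $\|a\|\,\|\check{P}\|_{\mathcal{L}_{\langle\mathcal{I}\rangle}^{(n+1)}}$ with no intermediate comparison of polynomial and multilinear norms.
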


\begin{proof}
Let $a \in E$, $a \neq 0$ and $P \in \mathcal{P}_{\mathcal{I}}^{(n+1)}(^{n+1}E; F)$. By Proposition \ref{CH5IL}, $\check{P}\in \mathcal{L}_{\mathcal{\langle\mathcal{I}\rangle}}^{(n+1)}(^{n+1}E; F)$. So, $\check{P}_a \in \mathcal{L}_{\mathcal{\langle\mathcal{I}\rangle}}^{(n)}(^nE; F)$. Since $\check{P}_a = (P_a)^{\vee}$, it follows that $(P_a)^{\vee} \in \mathcal{L}_{\mathcal{\langle\mathcal{I}\rangle}}^{(n)}(^nE; F)$. Therefore, by Proposition \ref{CH5IL}, $P_a \in \mathcal{P}_{\mathcal{I}}^{(n)}(^{n}E; F)$.

Since $\check{P}\in \mathcal{L}_{\mathcal{\langle\mathcal{I}\rangle}}^{(n+1)}(^{n+1}E; F)$, there exists a Banach space $H$ and a linear operator $u \in \mathcal{I}(H; F)$, such that
\begin{equation*}
\check{P}(B_E\times{\cdots}\times B_E) \subset u(B_H)
\end{equation*}
So, $P_a(x) = \check{P}(a, x,{\dots}, x) = \check{P}\left(\frac{a}{\|a\|}, x,\dots, x \right)\|a\|$.
Since $\|a\|u \in \mathcal{I}(H; F)$, we have $P_a(x) \in \|a\|u(B_H)$. So,
\begin{equation*}
\left\|P_a \right\|_{\mathcal{P}_{\mathcal{\langle I \rangle}}} \le \|a\| \|u\|_{\mathcal{I}}
\end{equation*}
for every $u \in \mathcal{I}(H; F)$, such that $\check{P}(B_E\times{\cdots}\times B_E) \subset u(B_H)$. Therefore
\begin{equation*}
\left\|P_a \right\|_{\mathcal{P}_{\mathcal{\langle I \rangle}}} \le \|a\| \left\|\check{P} \right\|_{\mathcal{L}_{\mathcal{\langle I \rangle}}^{(n+1)}}.
\end{equation*}


%
\end{proof}

Using the same argument of Proposition \ref{prop75} we can prove

\begin{prop}
Let $P \in \mathcal{P}_{\mathcal{\langle I \rangle}}^{(n)}(^{n}E; F)$ and $Q \in \mathcal{P}(^{m}E)$. Then
$$PQ \in \mathcal{P}_{\mathcal{\langle I \rangle}}^{(n+m)}(^{n+m}E; F)\ \mbox{and} \
\|PQ\|_{\mathcal{P}_{\mathcal{\langle I \rangle}}^{(n+m)}} \le \|P\|_{\mathcal{P}_{\mathcal{\langle I \rangle}}^{(n)}}\|Q\|.$$
\end{prop}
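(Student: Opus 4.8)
The plan is to mimic the structure of the earlier multilinear result (Proposition \ref{prop75}), working directly with the defining inclusion $P(B_E)\subset u(B_H)$ and absorbing the polynomial $Q$ into the operator $u$. First I would assume $Q\neq 0$ (the case $Q=0$ is trivial since $PQ=0$ is of finite type, hence $\mathcal{I}$-bounded). Since $P\in\mathcal{P}_{\langle I\rangle}^{(n)}(^nE;F)$, there is a Banach space $H$ and a linear operator $u\in\mathcal{I}(H;F)$ with $P(B_E)\subset u(B_H)$; that is, for each $x\in B_E$ there exists $h_x\in B_H$ with $P(x)=u(h_x)$.

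Next I would compute $PQ$ on the unit ball. For $x\in B_E$ we have $Q(x)\in\mathbb{K}$ with $|Q(x)|\le\|Q\|$, so
$$
PQ(x) = Q(x)P(x) = Q(x)u(h_x) = u\ap Q(x)h_x\fp = \|Q\|\, u\ap \tfrac{Q(x)}{\|Q\|}h_x\fp.
$$
Setting $\tilde u := \|Q\|\,u$, we have $\tilde u\in\mathcal{I}(H;F)$ because $\mathcal{I}$ is a vector space stable under scalar multiples, and $\|\tilde u\|_{\mathcal{I}}=\|Q\|\,\|u\|_{\mathcal{I}}$; moreover $\tfrac{Q(x)}{\|Q\|}h_x\in B_H$ since $|Q(x)/\|Q\||\le1$ and $\|h_x\|\le1$. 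Hence $PQ(x)\in\tilde u(B_H)$ for every $x\in B_E$, which shows $PQ(B_E)\subset\tilde u(B_H)$ and therefore $PQ\in\mathcal{P}_{\langle I\rangle}^{(n+m)}(^{n+m}E;F)$. (Note $PQ$ is indeed an $(n+m)$-homogeneous polynomial, since the product of an $n$-homogeneous and an $m$-homogeneous polynomial is $(n+m)$-homogeneous.)

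For the norm estimate, the above shows that whenever $u$ witnesses the $\mathcal{I}$-boundedness of $P$ via $P(B_E)\subset u(B_H)$, the operator $\|Q\|\,u$ witnesses the $\mathcal{I}$-boundedness of $PQ$, so $\|PQ\|_{\mathcal{P}_{\langle I\rangle}^{(n+m)}}\le\|\|Q\|\,u\|_{\mathcal{I}}=\|Q\|\,\|u\|_{\mathcal{I}}$. Taking the infimum over all such $u$ gives $\|PQ\|_{\mathcal{P}_{\langle I\rangle}^{(n+m)}}\le\|Q\|\,\|P\|_{\mathcal{P}_{\langle I\rangle}^{(n)}}$, as desired. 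I do not anticipate a genuine obstacle here: the only point requiring a little care is the bookkeeping with $\|Q\|$ in the denominator (to stay inside $B_H$) versus as a factor on $u$, exactly as in the proof of Proposition \ref{prop75}; everything else is a direct transcription of the multilinear argument to the homogeneous-polynomial setting, using that $Q$ is scalar-valued so that $Q(x)h_x$ makes sense inside $H$.
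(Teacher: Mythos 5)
Your argument is correct and is precisely the transcription of the proof of Proposition \ref{prop75} to the polynomial setting, which is exactly what the paper does (it omits the proof, stating only that the same argument applies). The handling of the trivial case $Q=0$ and the bookkeeping of $\|Q\|$ between the operator $\tilde u=\|Q\|u$ and the ball element $\tfrac{Q(x)}{\|Q\|}h_x$ match the paper's multilinear computation.
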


Thus we can also conclude that the pair $$\left(\left(\mathcal{P}_\mathcal{\langle I \rangle}^{(n)}, \|\cdot\|_{\mathcal{P}_\mathcal{\langle I \rangle}^{(n)}} \right), \left(\mathcal{L}_\mathcal{\langle I \rangle}^{(n)} , \|\cdot \|_{\mathcal{L}_\mathcal{\langle I \rangle}^{(n)}} \right) \right)_{n=1}^{N}$$ is strongly coherent and compatible with $\mathcal{L}_\mathcal{\langle I \rangle}^{(1)}$.

\end{document}